\newtheorem{theorem}{Theorem}[section]
\newtheorem{corollary}[theorem]{Corollary}
\newtheorem{lemma}[theorem]{Lemma}
\newtheorem{proposition}[theorem]{Proposition}
\newtheorem{definition}[theorem]{Definition}
\newtheorem{remark}[theorem]{Remark}
\newcommand{\E}{\mathbb{E}}
\newcommand{\R}{\mathbb{R}}
\newcommand{\cA}{{\ensuremath{\mathcal A}}}
\newcommand{\cF}{{\ensuremath{\mathcal F}}}
\newcommand{\cL}{{\ensuremath{\mathcal L}}}
\newcommand{\cR}{{\ensuremath{\mathcal R}}}
\newcommand{\mm}{{\mbox{\boldmath$m$}}}
\newcommand{\eeta}{{\mbox{\boldmath$\eta$}}}
\newcommand{\sfP}{{\sf P}}
\newcommand{\sfT}{{\sf T}}
\newcommand{\restr}[1]{\lower3pt\hbox{$|_{#1}$}}
\newcommand{\nchi}{{\raise.3ex\hbox{$\chi$}}}
\def\qed{\ifmmode 
  \else \leavevmode\unskip\penalty9999 \hbox{}\nobreak\hfill
  \fi               
    \qquad           \hbox{\hskip.5em $\square$
                \hskip.1em}}
\renewcommand{\mm}{\mathfrak m} 
\newcommand{\bra}[1]{\left( #1 \right)}
\newcommand{\sqa}[1]{\left[ #1 \right]}
\newcommand{\cur}[1]{\left\{ #1 \right\}}
\newcommand{\abs}[1]{\left| #1 \right|}
\newcommand{\nor}[1]{\left\| #1 \right\|}
\newcommand{\vphi}{\varphi}
\renewcommand{\div}{\mathop{\rm div}\nolimits}        
\newcommand{\veps}{\varepsilon}
\newcommand{\Algebra}{{\mathscr A}}
\newcommand{\scrM}{\mathscr M}
\newcommand{\scrP}{\mathscr P}
\newcommand{\scrL}{\mathscr L}
\newcommand{\pinfty}{+ \infty}
\renewcommand{\P}{\mathbb{P}}
 \newcommand{\sfDelta}{\mathsf{\Delta}}
\title{Well-posedness of Multidimensional Diffusion Processes \\ with Weakly Differentiable Coefficients}
\author{Dario Trevisan\thanks{Universit\`a degli Studi di Pisa, \textsf{dario.trevisan@unipi.it}}  
}
\begin{document}
\maketitle

\begin{abstract}
We investigate well-posedness for martingale solutions of stochastic differential equations, under low regularity assumptions on their coefficients, widely extending the results first obtained by A.\ Figalli in \cite{figalli-sdes}. Our main results are a very general equivalence between different descriptions for multidimensional diffusion processes,  such as  Fokker-Planck equations and martingale problems, under minimal regularity and integrability assumptions, and new existence and uniqueness results for diffusions having weakly differentiable coefficients, by means of energy estimates and commutator inequalities. Our approach relies upon techniques recently developed jointly with L.\ Ambrosio in \cite{ambrosio-trevisan}, to address well-posedness for ordinary differential equations in metric measure spaces: in particular, we employ in a systematic way new representations and inequalities for commutators between smoothing operators and diffusion generators.
\end{abstract}

\section{Introduction}

Aim of this article is to study well-posedness (i.e., existence, uniqueness and stability) for martingale solutions of stochastic differential equations 
\begin{equation}\label{eq:intro-sde} d X_t = b(t,X_t) dt + \sigma(t,X_t) dW_t, \quad t \in (0,T), \end{equation}
providing in particular new results, under low regularity assumptions on the coefficients $b: (0,T)\times \R^d \to \R^d$, $\sigma: (0,T)\times \R^d \to \R^{d\times d}$.

The classical subject of martingale problems dates back at least to \cite{stroock-varadhan}, where it was first shown that continuous and uniformly elliptic covariances $a = \sigma \sigma^*$'s allow for uniqueness results which have no counterpart in the usual (It\^o-)Cauchy-Lipschitz theory, provided that the solution to \eqref{eq:intro-sde} is understood in a sufficiently weak sense. Since then, the theory has been growing, due to its robustness and strong connections with the theory of semigroups and parabolic PDE's, also in abstract (metric) frameworks, see e.g.\ \cite{ethier-kurtz}. 

Our primary goal here is to show that the techniques originally developed in \cite{ambrosio-trevisan} can be extended to the stochastic theory as well as specialized to the Euclidean setting, to extend in a systematic way the results established in the seminal paper \cite{figalli-sdes}. Actually, most of such techniques, tailored to study well-posedness problems for ordinary differential equations in metric measure spaces (possibly infinite-dimensional) are also well-suited also to the study of diffusions in metric measure spaces, as developed in the author's PhD dissertation \cite{trevisan-phd}. However, in this paper, we deal uniquely  with Euclidean spaces: among various motivations,  besides that a wider audience could be mainly interested in this setting, this allows us to compare new results and techniques with alternative approaches. Finally, Euclidean spaces are a useful ``intermediate'' step for the infinite dimensional theory, e.g.\ by cylindrical approximations; the theory developed here is also instrumental to the  developments in \cite{trevisan-diff-mms}.

Therefore, in this article, we adopt the same point of view as in \cite{figalli-sdes}, where precise connections between well-posedness of PDE's and martingale problems are settled, in particular for a wide class of diffusion having not necessarily continuous nor elliptic coefficients, provided that some Sobolev regularity holds. Of course, well-posedness has to be understood ``in average'' with respect to $\scrL^d$-a.e.\ initial condition (here and below, $\scrL^d$ is Lebesgue measure on $\R^d$). More precisely, a formalization akin to that of DiPerna-Lions (see e.g.\ \cite{ambrosio-crippa-edinburgh} for an account of the deterministic theory) is introduced, the main objects  being Stochastic Lagrangian Flows, i.e.,\ Borel families $(\eeta(x))_{x \in \R^d}$ of probability measures on $C([0,T];\R^d)$, such that
\begin{enumerate}[(i)]
\item $\eeta(x)$ solves \eqref{eq:intro-sde}, starting from $x$ at $t =0$, for $\scrL^d$-a.e.\ $x \in \R^d$;
\item the push-forward measures $(e_t)_{\sharp} \int \eeta(x)\,  d\scrL^d(x)$, where $e_t$ is the evaluation map at $t \in [0,T]$, are absolutely continuous with respect to $\scrL^d$, with uniformly bounded densities.
\end{enumerate}
Let us stress the fact that, as in the deterministic theory, uniqueness is understood for flows, thus in a selection sense: we are not claiming well-posedness for $\scrL^d$-a.e.\ initial datum. Moreover, we remark that, although the conditions above might read as perfect analogues of the notion of Regular Lagrangian flows \cite[Definition 13]{ambrosio-crippa-edinburgh}, Stochastic Lagrangian Flows are not necessarily (neither expected to be) deterministic maps of the initial point only; this is evident when $\sigma = 0$ above and any probability concentrated on possibly non-unique solutions to the ODE give rise to a solution to the martingale problem. Despite this discrepancy, such a theory provides rather efficient tools to study stochastic differential equations under low regularity assumptions, in Euclidean spaces, and, together with \cite{lebris-lions}, which deals with analogous issues from a PDE point of view, has become the starting point for further developments, among which we quote \cite{rockner-zhang-uniqueness-fpe, luo-fpe-sobolev-bv, fang-luo-thalmaier-sde-sobolev, zhang-degenerate}.

Before we proceed with a more detailed description of our results and techniques, let us stress the fact that we are concerned uniquely with martingale problems, so we do not address nor compare our results with those obtained for strong solutions of equations under low regularity assumptions on the coefficients (see the seminal paper \cite{veretennikov} and \cite{krylov-rockner, daprato-flandoli-priola-rockner} for more recent results). Rigorous correspondences between martingale (or weak) and strong solutions may be provided by the classical Yamada-Watanabe theorem \cite{yamada-watanabe} (and extensions, see e.g.\ \cite{kurtz-yw}). Moreover, the literature on Fokker-Planck equations for general measures is so vast that we must limit ourselves to a comparison of our results only with those which are strongly related and look similar in techniques and mathematical contents: this is done in Section \ref{sec:statements}.

We proceed with a brief description of our contributions developed below, which can be split into two parts, roughly corresponding to Section \ref{chap:equivalent-descriptions-rd} (toghether with Appendix \ref{chap:superposition-rd}) and Section \ref{sec:well-posedness-diperna-lions-figalli}.

In the first part, we investigate the problem of abstract equivalence between ``Eulerian'' and ``Lagrangian'' descriptions for multidimensional diffusion processes, where by the former we mean by Fokker-Planck equations and the latter consists of solutions to martingale problems. Although such a correspondence can not be considered novel and many ideas can be traced back at least to \cite{ambrosio-bv} in the theory of ODE's and DiPerna-Lions flows, as well as \cite{kurtz-stockbridge} for c\`adl\`ag martingale problems, to our knowledge, here we provide for the first time general results, under somewhat minimal integrability assumptions on coefficients as well as on solutions. Moreover, we choose to state and prove our results in such a way that they can be translated with a minimal effort to the case of general metric measure spaces, that we address in \cite{trevisan-diff-mms}.

In this part, the crucial result is Theorem \ref{thm:sp}, which provides a so-called ``superposition principle'', i.e., a (non-canonical) way to lift any probability-valued solution of a Fokker-Planck equation to some solution of the corresponding martingale problem. Here, ``to lift'' means that the $1$-marginals of the process which solve the martingale problem coincide with the given solution of the Fokker-Planck equation. Results in a similar spirit appear quite often in the literature (see also the comments just below the statement of Theorem \ref{thm:sp}) and could be traced back to L.C.\ Young's theory of generalized curves. Technically, one could start from already known results such as \cite[Theorem 2.6]{figalli-sdes} or \cite[Theorem 4.9.17]{kurtz-stockbridge} to provide a slightly shorter proof, but we preferred to postpone an almost self-contained derivation in Appendix \ref{chap:superposition-rd}: indeed, even if we rely on the results quoted above, it turns out that one has to settle non-trivial technical problems. In particular, an underlying result is Theorem \ref{theorem:basic-burkholder}, where we establish an estimate for the modulus of continuity of solutions to martingale problems under somewhat minimal integrability assumptions (based on a refined L\'evy-type estimate); an alternative but less effective approach, based on fractional Sobolev spaces, was developed in \cite{trevisan-phd}. Finally, we point out that we exploit a technique originally developed in \cite[Theorem 7.1]{ambrosio-trevisan}, in case of cylindrical approximations, to move from bounded coefficients to possibly unbounded ones.

In the second part, we address the problem of well-posedness for Fokker-Planck equations, providing sufficient conditions assuming Sobolev regularity of the coefficients. We mainly focus on uniqueness issues, which are settled by means of energy or $L^2$ estimates, formally satisfied by any weak solution,  under suitable bounds on the divergence of the driving coefficients: such an approach could be hardly considered novel, as it was already present in  \cite{diperna-lions}, for transport equations. However, our main contribution consists in a novel and systematic approach to the estimate of the error terms arising in the approximation procedure, to obtain so-called commutator inequalities: see Section \ref{sec:commutators} for a brief account of the method as well as complete proofs of our crucial resuts. It turns out that, essentially by means of the same technique, we are able to deal with Sobolev derivations (Lemma \ref{lemma:commutator-estimate}), Sobolev diffusions (Lemma \ref{lem:commutator-diffusion-rd}) as well as with time-dependent elliptic diffusions (Lemma \ref{lemma:commutator-time}). Such a technique, which ultimately consists in choosing a Markov semigroup as a smoothing operator and relying on duality arguments as well as an interpolation \emph{\`a la} Bakry-Em\'ery,  has also the advantage of being completely ``Eulerian'' and ``coordinate free''. Let us point out also in this case that it was first developed in \cite{ambrosio-trevisan} to deal with an analogue problem for derivations in metric measure spaces. 

In conclusion, we state and prove two well-posedness results: Theorem \ref{thm:wp-degenerate}, for diffusions \eqref{eq:intro-sde} having possibly degenerate coefficients, assuming first order Sobolev regularity for the drift $b$ and second order Sobolev regularity for the infinitesimal covariance $a = \sigma \sigma^*$ (together with uniform bounds on their divergence); Theorem \ref{thm:wp-degenerate}, for the bounded elliptic case, i.e.\ $ \lambda \abs{v}^2 \le a (v, v)  \le \Lambda \abs{v}^2$ for every $v \in \R^d$, with $t \mapsto a_t$ Lipschitz, where (roughly speaking) regularity assumptions can be reduced of one order (i.e., no assumption on $b$, and first order Sobolev regularity for $a$). We regard such results as chief examples of the strength and versatility of our techniques for commutator estimates, and we point out that other interesting results could arise in different situations, such as perturbations of elliptic generators which enjoy some ultra- (or hyper-) contractivity features, as well as the case of BV-regular coefficients, that we do not address here.

\smallskip
\noindent {\bf Acknowledgments.}
The author has been partially supported by PRIN10-11 grant from MIUR
for the project Calculus of Variations and is a member of the GNAMPA group of the Istituto 
Nazionale di Alta Matematica (INdAM).

The author thanks his PhD advisors L.\ Ambrosio and M.\ Pratelli, for many discussions before and during the writing of this paper, as well as the thesis referees D.\ Bakry and M.\ R\"ockner for their useful comments and constructive criticism, which were taken into great account also while developing this work, in particular with respect to comparison with existing literature.

\section{Diffusion processes and their equivalent descriptions}\label{chap:equivalent-descriptions-rd}

In this section, we study abstract correspondences between ``Eulerian'' and ``Lagrangian'' descriptions for multidimensional diffusion processes, in particular with respect to well-posedness results. The main ideas involved are not entirely novel, but they widely extend those from \cite{figalli-sdes}: 
here we obtain results under minimal regularity and integrability assumptions. As already remarked in the introduction, on a technical side, a crucial tool is the superposition principle for diffusions, Theorem \ref{thm:sp}, whose proof is deferred to Appendix~\ref{chap:superposition-rd}. 

In Section \ref{sec:definitions-superposition}, we introduce diffusion operators in $\R^d$, Fokker-Planck equations, martingale problems and flows; in Section \ref{sec:correspondence-rn} we study their equivalences.

\subsection{Definitions and basic facts} \label{sec:definitions-superposition}

Throughout, we use the following notation, for $v$, $w \in \R^d$ ($d\ge 1$) and $A$, $B \in \R^{d \times d}$, 
\[  v \cdot w = \sum_{i=1}^d v^i w^i, \quad \abs{v}^2 = v\cdot v,  \quad  (v \otimes w)^{i,j} := v^i w^j, \quad \text{for $i,j \in \cur{1,\ldots d}$,}\]
\[ A: B = \sum_{i,j=1}^d A^{i,j} B^{i,j},\quad  \abs{A}^2 = A:A,  \quad A(v,w) = A : (v \otimes w),\]
and the following notation for differential calculus on $(0,T)\times \R^d$ ($T \in [0,\infty)$):
\[ f_t(\cdot) = f(t, \cdot),\,\,  \partial_t f = \frac{\partial f}{\partial t }, \,\,  \partial_i f =  \frac{\partial f}{\partial x^i}, \, \, \partial_{i,j} f = \frac{\partial^2 f}{\partial x^i\partial x^j}, \,\,  \text{for $t \in [0,T]$, $i,j \in \cur{1,\ldots, d}$,}\]
\[ \nabla f = (\partial_i f)_{i=1}^d, \, \nabla^2 f = (\partial_{i,j} f)_{i,j=1}^d, \, \, \text{thus}\, \,  b \cdot \nabla f = \sum_{i=1}^d b^i \partial_i f \, \text{and } \, a: \nabla^2 f = \sum_{i,j=1}^d a^{i,j} \partial_{i,j} f,\]
as well as the notation $\scrL^d$ for Lebesgue measure on $\cR^d$ and $\nabla^*$ for the distributional adjoint of $\nabla$ (i.e., $\nabla^*b= -\div b$ on vector fields). 

We write $\scrM(\R^d)$ for the space of signed (real-valued) Borel measures on $\R^d$ (with finite total variation), $\scrM^+(\R^d) \subseteq \scrM(\R^d)$ for the cone of finite non-negative measures and $\scrP (\R^d) \subseteq \scrM^+(\R^d)$ for the convex set of Borel probability measures on $\R^d$. We say that a curve $\nu = (\nu_t)_{t \in (0,T)} \subseteq \scrM(\R^d)$ is Borel if, for every Borel set $A \subseteq \R^d$, the curve $t \mapsto \nu_t(A)$ is Borel; we let $\abs{\nu} = (\abs{\nu_t})_{t \in (0,T)}$ be the curve of total variation measures. A curve $\nu = (\nu_t)_{t \in (0,T)} \subseteq \scrP(\R^d)$ is narrowly continuous if, for every $f \in C_b(\R^d)$,  $t \mapsto \int f d\eta_t$ is continuous.

Most of the quantities that we consider below are integrated with respect to the variable $t \in (0,T)$, with respect to $\scrL^1|_{[0,T]}$: when $\nu = (\nu_t)_{t \in (0,T)} \subseteq \scrM (\R^d)$ is a Borel curve, we write  $\abs{\nu} dt$ for the Borel measure on $(0,T) \times \R^d$,  $A \mapsto \abs{\nu}(A) = \int_0^T \abs{\nu_t}(A_t) dt$, for $A \subseteq (0,T)\times \R^d$ Borel.
For $p$, $q \in [1,\infty]$ and a Borel curve $\nu = (\abs{\nu_t})_{t \in (0,T)} \subseteq \scrM^+ (\R^d)$, the space $L^p_t L^q_x(\nu)$ is naturally defined and endowed with the Banach norm
\[  \nor{ f}_{L^p_tL^q_x(\nu) } := \nor{ \nor{ f(t, x) }_{L^q_x(\nu_t) } }_{L^p_t(dt) }  < \infty,\]

On the space $C([0,T]; \R^d)$ (naturally endowed with the $\sup$ norm and its Borel $\sigma$-algebra), we let $e_t: \gamma \mapsto \gamma_t := \gamma(t)\in \R^d$ be the evaluation map at $t \in [0,T]$. The natural filtration on $C([0,T]; \R^d)$ is the increasing family of $\sigma$-algebras $\cF = (\cF_t)_{t \in [0,T]}$, with $\cF_t := \sigma( e_s: s \in [0,t])$. Given $\eeta \in \scrP ( C([0,T]; \R^d))$, we always let $\eta_t := (e_t)_\sharp \eeta$ be the $1$-marginal law at $t \in [0,T]$. Notice that the family $\eta := (\eta_t)_{t \in [0,T]} \subseteq \scrP(\R^d)$ is narrowly continuous.

We let throughout $\Algebra = C^{1,2}_b((0,T)\times\R^d))$ (respectively, $\Algebra_c = C^{1,2}_c((0,T)\times\R^d))$) be the space of uniformly bounded (respectively, compactly supported) and continuously differentiable functions, once with respect to $t \in (0,T)$ and twice with respect to $x \in \R^d$, with uniformly bounded derivatives (as usual, the superscript $(1,2)$ counts the number of derivatives with respect to $(t,x)$, other superscripts may appear, with natural meaning). We prefer the ``abstract'' notation $\Algebra$ and large parts of the theory can be developed when ``test'' functions are replaced by other classes (e.g.\ as developed throughout the monograph \cite{kurtz-stockbridge}). We endow $\Algebra$ with the norm
\begin{equation*}
\nor{f}_{C^{1,2}_{t,x}} = \sup_{(t,x) \in (0,T)\times\R^d} \cur{ |f(t,x)|+ |\partial_t f (t,x)| + |\nabla f (t,x)| + | \nabla^2 f(t,x)| }.\end{equation*}
Notice that, by uniform continuity, any $f \in \Algebra$ extends to $[0,T]\times \R^d$.

Throughout, we always let
\begin{equation}\label{eq:a-b-rn} a = (a^{i,j})_{i,j = 1}^d: (0,T)\times\R^d \to \operatorname{Sym}_+(\R^d), \quad b =(b^i)_{i =1}^d: (0,T)\times\R^d \to \R^d,\end{equation}
be Borel, where $\operatorname{Sym}_+(\R^d)$ is the space of symmetric, non-negative definite $d \times d$ matrices.

We define diffusion operators in $\R^d$, measure-valued weak solutions to Fokker-Planck equations and martingale problems on the interval $[0,T]$. Most of these notions are classical (for a brief historical account, see e.g.\ the introduction of \cite{stroock-varadhan}): for the sake of clarity we provide the definitions and prove some simple facts.

\begin{definition}[diffusion operator]
We let  $\cL (= \cL(a,b))$ be the linear differential operator
\[  \Algebra \ni f \quad \mapsto  \quad \cL f = \frac 1 2 a: \nabla^2f + b \cdot \nabla f,  \]
with values in Borel maps on $(0,T)\times \R^d$.
\end{definition}

We write $\cL_t f := (\cL f)_t$, for $t \in (0,T)$. As usual, the coefficients $b$, $a$ are referred  as the drift of $\cL$ and the infinitesimal covariance of $\cL$. If $a = 0$, then $\cL$ reduces to a linear first-order operator, i.e.\ a derivation, and we say that we are in the deterministic case.


Given a diffusion operator $\cL$, we let the ``Eulerian'' description of evolution of particles ``driven'' by $\cL$ consist of weak solutions of Fokker-Planck (or forward Kolmogorov) equations, in duality with $\Algebra$. Although our main interest lies in solutions to FPE's that are narrowly continuous curves of probability measures, we introduce more general measure valued solutions, as they are useful, e.g.\  the space of solutions becomes linear.

\begin{definition}[weak solutions of FPE's]
A Borel curve $\nu = (\nu_t)_{t \in (0,T)} \subseteq \scrM(\R^d)$ is a weak solution of the Fokker-Planck equation (FPE)
\begin{equation}\label{eq:fpe-rn}\partial_t \nu_t = \cL^*_t \nu_t, \quad \text{on $(0,T) \times \R^d$,} \end{equation}
if it holds
\begin{equation}\label{eq:fpe-integrability} \int_0^T \int \bra{ \abs{a_t} + \abs{b_t}} d |\nu_t| dt < \infty\end{equation}
and, for every $f \in \Algebra_c$, it holds
\begin{equation}\label{eq:weak-fpe-rn} \int_0^T \int \sqa{\partial_t f (t,x)+ \cL f  (t,x) } d\nu_t (x) \,  dt = 0.
\end{equation}
\end{definition}

With the notation introduced above, condition \eqref{eq:fpe-integrability} can be restated as $a$, $b \in L^1_{t,x}(\abs{\nu})$. In what follows, we frequently omit to specify  the operator $\cL$, that we regard as fixed.


\begin{remark}
\label{rem:extension-weak-formulation}
A density argument akin to \cite[Lemma 8.1.2]{ambrosio-gigli-savare-book} allows for proving that any solution $\nu = (\nu_t)_{t \in (0,T)} \subseteq \scrP(\R^d)$ to \eqref{eq:fpe-rn} admits a unique narrowly continuous representative $\tilde{\nu} =( \tilde{\nu})_{t \in [0,T]}$, with $\nu_t = \tilde{\nu}_t$, for $\scrL^1$-a.e.\ $t \in (0,T)$. Thanks to this fact, we may also say that the solution $\nu$ starts from $\nu_0$ (or that $\nu_0$ is the initial law of $\nu$). Moreover, for every $f \in \Algebra$, it holds
\begin{equation}\label{eq:narrowly-continuous-fp} \int f_{t_2} d\tilde{\nu}_{t_2} - \int f_{t_1} d\tilde{\nu}_{t_1} = \int_{t_1}^{t_2} \int \sqa{ \partial_t f  + \cL_t f }d\nu_t dt, \quad \text{for $t_1$, $t_2 \in [0,T]$, $t_1 \le t_2$,}\end{equation}
\end{remark}

Actually, one first proves that \eqref{eq:narrowly-continuous-fp} holds for $f \in \Algebra_c$ and then extends by density to $f \in \Algebra$. Since this last step  requires the introduction of useful cut-off functions, we sketch it here, for later use. For $R \ge 1$, we fix $\chi_R: \R^d \to [0,1]$, a smooth function with $\chi_R(x) = 1$, for $\abs{x} \le R$, $\chi_R (x) = 0$, for $\abs{x} \ge 2R$, such that $\abs{ \nabla \chi_R } \le 4R^{-1}$ and $\abs{\nabla^2 \chi_R} \le 4 R^{-2}$. Given $f \in \Algebra$, we let $f_R = f \chi_R \in \Algebra_c$, for which we assume that \eqref{eq:narrowly-continuous-fp} holds. The chain rule entails
\[\cL_t f_R  = (\cL_t f) \chi_R + f_t \cL_t \chi_R + a_t( \nabla f_t, \nabla \chi_R ), \quad \text{for $t \in (0,T)$,}\]
hence the bound
\[ |\cL_t f_R | \le \abs{\cL_t f} + \abs{f_t} \abs{\cL_t \chi_R } +  \abs{a_t} \abs{\nabla f_t} \abs{\nabla \chi_R} \le C \nor{f_t}_{C^2_b} \sqa{ \abs{a_t} + \abs{b_t} }.\]
Letting $R \to \infty$, by dominated convergence, we extend the validity of \eqref{eq:narrowly-continuous-fp}. 


Next, we introduce solutions of the martingale problem, following \cite[Chapter 6]{stroock-varadhan}. In particular, we argue directly on the ``canonical'' space $\Omega = C([0,T]; \R^d)$, endowed with the evaluation process $e_t(\gamma) = \gamma(t)$, $t \in [0,T]$,  and its natural filtration.

\begin{definition}[solution of MP's]
A probability $\eeta \in  \scrP ( C([0,T]; \R^d))$ is a solution of the martingale problem (MP) (associated to $\cL$) if it holds
\begin{equation}\label{eq:martingale-integrability}   \int \sqa{ \int_0^T \bra{\abs{b_t}\circ e_t + \abs{a_t } \circ e_t} dt} d\eeta  < \infty\end{equation}
and, for every $f \in \Algebra$, the process
\begin{equation}\label{eq:martingale} [0,T] \ni t \mapsto f_t \circ e_t - \int_0^t \sqa{\partial_t f_s + \cL_s f}\circ e_s ds\end{equation}
is a martingale with respect to the natural filtration on $C([0,T]; \R^d)$.
\end{definition}

Recall the notation $\eta_t = (e_t)_\sharp \eeta \in \scrP(\R^d)$, $t \in [0,T]$, thus $\eta_0$ is the initial law of $\eeta$. As for FPE's, we usually omit to specify $\cL$, regarded as fixed.  Let us remark that a density argument shows that it makes no difference to require that \eqref{eq:martingale} is a martingale only for $f \in \Algebra_c$.

For any solution $\eeta$ of the MP, the integrability assumption \eqref{eq:martingale-integrability}, which is equivalent to $a$, $b \in L^1(\eta)$, entails that the process $[0,T] \ni t \mapsto \int_0^t \sqa{\partial_t f_s + \cL_s f}\circ e_s ds$ is well defined, up to a $\eeta$-negligible set, as continuous and  progressively measurable process. In particular, it belongs to $L^\infty_{loc}(\eeta, (\cF_t)_t)$, i.e.\ there exists an increasing sequence of stopping times $\tau_n$, $\eeta$-a.s.\ converging towards $T$, such that $\int_0^{\tau_n} \sqa{\partial_t f_s + \cL_s f}\circ e_s ds \in L^\infty(\eeta)$, for every $n \ge 1$: it is sufficient to let
\[ \tau_n := T \land \inf \cur{ t \in [0,T]: \int_0^t\bra{ \abs{b_s} \circ e_s +  \abs{a_s} \circ e_s} ds \ge n}.\]

We prefer throughout not to enlarge the filtration $\cF$ with $\eeta$ negligible sets. This causes virtually no harm in the exposition, e.g.\ a martingale $M = (M_t)_{t \in [0,T]}$ must be understood in the sense that it holds $\E[ M_t | \cF_s ] = M_s$, $\eeta$-a.s.\ for every $s \le t$ (and the $\eeta$-negligible set could not belong to $\cF_s$).

When $a=0$, solutions to the MP reduce to probability measures concentrated on absolutely continuous solutions to the ordinary differential equation
\[ \frac{d}{dt}\gamma_t = b_t(\gamma_t), \quad \text{for $\scrL^1$-a.e.\ $t \in (0,T)$.}\]
Indeed, arguing as in \cite[Lemma 3.8]{figalli-sdes}, it turns out that the martingale \eqref{eq:martingale} is constant. More generally, the quadratic variation process of \eqref{eq:martingale} is $t \mapsto \int_0^t a_s(\nabla f_s, \nabla f_s) ds$: this plays a crucial role in estimates for the modulus of continuity of the canonical process, see e.g.\ Corollary \ref{coro:regularity-paths-martingale-general}.

By integration of \eqref{eq:martingale} with respect to $\eeta$ (i.e., taking expectation) we deduce that any solution $\eeta$ of the MP induces, by means of its $1$-marginals $(\eta_t)_{t \in (0,T)}$ a narrowly continuous solution of the FPE \eqref{eq:fpe-rn}. 
A converse statement is provided by the following theorem, whose proof is deferred in  Appendix \ref{chap:superposition-rd}; in the next section, it  plays a crucial role to connect various well-posedness results for FPE's and MP's. 

\begin{theorem}[superposition principle]\label{thm:sp}
Let $\nu = (\nu_t)_{t \in [0,T]} \subseteq \scrP (\R^d)$ be a narrowly continuous solution of \eqref{eq:fpe-rn}. Then, there exists  $\eeta$ which is a solution to the MP (associated to the same diffusion operator $\cL$) such that, for every $t \in [0,T]$, it holds $\eta_t = \nu_t$.
\end{theorem}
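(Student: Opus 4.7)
The plan is a regularization-plus-compactness argument along the lines of the classical superposition principle (Ambrosio for continuity equations, Figalli and Kurtz--Stockbridge for martingale problems). I would first treat the case of bounded coefficients $a,b$, and then extend to the general case via a cylindrical-type truncation in the spirit of \cite[Theorem 7.1]{ambrosio-trevisan}.

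\textbf{Bounded case.} Fix a smooth mollifier $\rho^\eps$ on $\R^d$ and set $\nu^\eps_t := \nu_t * \rho^\eps$. Define the effective coefficients
\[
a^\eps_t(x) := \frac{((a_t\,\nu_t)*\rho^\eps)(x)}{\nu^\eps_t(x)}, \qquad b^\eps_t(x) := \frac{((b_t\,\nu_t)*\rho^\eps)(x)}{\nu^\eps_t(x)},
\]
which are smooth in $x$, bounded uniformly in $\eps$, and which make $\nu^\eps$ a narrowly continuous solution of the FPE associated with $\cL^\eps := \cL(a^\eps,b^\eps)$; note in particular that $a^\eps_t(x)$ is a pointwise convex combination of values of $a_t$ and therefore remains symmetric and positive semidefinite. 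For the smooth bounded operator $\cL^\eps$, classical Stroock--Varadhan theory provides a (unique) solution $\eeta^\eps$ of the MP with initial law $\nu_0 * \rho^\eps$, and uniqueness in this regime at the PDE level forces its $1$-marginals to coincide with $\nu^\eps$.

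\textbf{Tightness and passage to the limit.} Tightness of $(\eeta^\eps)_\eps$ in $\scrP(C([0,T];\R^d))$ follows from tightness of $(\nu_0 * \rho^\eps)_\eps$ together with the modulus-of-continuity estimate of Theorem~\ref{theorem:basic-burkholder}, applied with integrability bounds that are uniform in $\eps$: indeed, by standard mollification properties, $a^\eps \to a$ and $b^\eps \to b$ in $L^1(\abs{\nu}dt)$, which is precisely the role of the marginal-preserving smoothing chosen above. Any narrow limit point $\eeta$ satisfies $\eta_t = \nu_t$ for every $t \in [0,T]$, since $\eta^\eps_t = \nu^\eps_t \to \nu_t$ narrowly. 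Passing to the limit in the martingale identity for $f \in \Algebra_c$, using uniform bounds on $\nabla f$, $\nabla^2 f$ and the $L^1$-convergence of $a^\eps, b^\eps$, yields that $\eeta$ solves the MP for $\cL$.

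\textbf{Unbounded coefficients and main obstacle.} Removing the boundedness assumption is the delicate step: a naive truncation of $a,b$ destroys the FPE and hence breaks the marginal identification $\eta_t = \nu_t$. Following the cylindrical-approximation technique of \cite[Theorem 7.1]{ambrosio-trevisan}, I would enlarge the state space to include an auxiliary coordinate tracking the integrated size of the coefficients along trajectories, apply a level-$R$ truncation on this augmented system (for which the bounded case applies), and then marginalize back to $C([0,T];\R^d)$. The $L^1$-bound \eqref{eq:fpe-integrability} ensures both that the truncation error vanishes as $R \to \infty$ and that Theorem~\ref{theorem:basic-burkholder} still delivers the uniform equicontinuity needed for tightness, since that estimate is tailored to $L^1$ integrability. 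The main obstacle is precisely here: the cylindrical truncation must preserve, in the limit, both the identity $\eta_t = \nu_t$ and the martingale property for unbounded $a,b$, and controlling the commutator between truncation and $\cL$ while retaining only the minimal integrability \eqref{eq:fpe-integrability} is the source of most of the technical work deferred to the appendix.
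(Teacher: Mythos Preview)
Your overall three-step scheme (approximate, tightness, limit) and your treatment of the bounded case match the paper closely: mollify $\nu$, build smooth effective coefficients via the quotient formula, solve the MP classically, identify marginals by PDE uniqueness in the smooth regime, and invoke Theorem~\ref{theorem:basic-burkholder} for tightness.

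The real gap is in your extension to unbounded coefficients. The ``augmented state space with an auxiliary coordinate tracking $\int_0^t(|a_s|+|b_s|)(X_s)\,ds$'' is \emph{not} the technique of \cite[Theorem~7.1]{ambrosio-trevisan} as used here, and as you describe it, it is circular: that auxiliary coordinate is a path functional, but you do not yet have any Lagrangian representation---that is precisely what you are trying to build. One could attempt an Eulerian version on $\R^{d+1}$, but there is no canonical lift of $\nu_t$ to $\R^{d+1}$ (the $y$-marginal encodes path history), and truncating in the extra coordinate does not obviously produce a FPE whose $\R^d$-projection is still $\nu$.

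What the paper actually does is more concrete and stays entirely in $\R^d$. It iterates through four regimes: smooth bounded $\to$ bounded $\to$ locally bounded $\to$ general. The step you are missing (bounded $\to$ locally bounded) uses \emph{push-forward via smooth cutoff maps} $\pi_M(x)=x\,\chi_M(x):\R^d\to\R^d$: one defines $\pi_M(b)^i_t=\E_{\nu_t}[\cL(\pi_M^i)\mid\pi_M]$ and similarly for $a$, and observes that $(\pi_M)_\sharp\nu$ solves the FPE for $\cL(\pi_M(a),\pi_M(b))$. Since conditional expectation contracts $L^1$-norms (and convex integrals), the bound \eqref{eq:fpe-integrability} passes to the push-forward uniformly in $M$; this is the actual content imported from \cite[Theorem~7.1]{ambrosio-trevisan}. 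For tightness in the unbounded regimes, the paper uses the de la Vall\'ee Poussin criterion to upgrade \eqref{eq:fpe-integrability} to $\int\Theta_1(|b|)+\Theta_2(|a|)\,d\nu<\infty$ for some superlinear $\Theta_i$, which is exactly the input Theorem~\ref{theorem:basic-burkholder} requires---your sketch omits this point. The final step (locally bounded $\to$ general) is another mollification, using only that convolved coefficients are locally $C^2$ uniformly in $t$.
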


In what follows, we refer to $\eeta$ above as a superposition solution for $\nu$.

We refer to this result as the superposition principle for diffusions: the terminology originates in the deterministic literature of ODE's, see \cite{ambrosio-bv}: the solution $\eeta$ can be non-trivially distributed among the possibly non-unique solutions to the ODE, thus introducing some ``randomness'' in an otherwise deterministic setting; these probability measures are nevertheless superpositions of deterministic paths. In the setting of diffusion operators, solutions are already expected to be random, thus the term is justified only by extension, although it would be interesting, at least in some cases, to be able to distinguish between the two ``sources of randomness'': this would require us to introduce concepts such as strong and weak solutions.

As remarked in the introduction, Theorem \ref{thm:sp} is a quite general result, only the integrability condition \eqref{eq:fpe-integrability} being required, which is some sense minimal to give sense to FPE's and MP's (although one may slightly relax it by dealing with local martingale problems). Our result extends \cite[Theorem 2.6]{figalli-sdes}, where only uniformly bounded coefficients are considered; let us mention that results in a similar spirit -- that of L.C.\ Young's theory of generalized curves -- appear quite often in the literature, e.g.\ Echeverria's theorem \cite[Theorem 4.9.17]{ethier-kurtz} (see  \cite{kurtz-stockbridge} for extensions) in the framework of martingale problems in spaces of c\`adl\`ag paths, or Smirnov's decomposition of 1-currents \cite{smirnov-sp} (see also \cite{pao-ste-12} for an alternative approach, valid also in the case of metric currents). Our strategy of proof extends that of \cite[Theorem 2.6]{figalli-sdes} and should be regarded as a (non-trivial) counterpart of \cite[\S 8.1 and \S 8.2]{ambrosio-gigli-savare-book} in the setting of multi-dimensional diffusions: although rather natural, the derivation is not immediate from the available literature (both from deterministic and stochastic), due to non-trivial technical points. The major difficulty in our proof is to provide estimates for the modulus of continuity of the canonical process (a problem that would appear also if we wanted to deduce it from Echeverria's theorem).

Next, we investigate some stability properties enjoyed by solutions of MP's and FPE's, with respect to suitable operations: their proofs are straightforward, so we omit them.

Clearly, all the definitions above can be given with respect to any interval $[S,T]$ in place of $[0,T]$ (when it is not mentioned, we always refer to the interval $[0,T]$): solutions are then well-behaved with respect to the natural restriction map 
\[  C([0,T]; \R^d) \ni \gamma \mapsto \gamma|_{[S,T]} = (\gamma_t)_{t \in [S,T]} \in  C([S,T]; \R^d).\]

\begin{proposition}\label{prop:restriction}
Let $S$, $T \in \R$, with $0 \le S \le T$, and let $\eeta \in \scrP(C([0, T]; \R^d))$ be a solution of the MP. Let $\rho: C([S,T]; \R^d) \to [0,\infty)$ be a uniformly bounded probability density (with respect to $\eeta$), measurable with respect to $\cF_{S}$.

Then, the push-forward $(\rho \eeta) |_{[S,T]} := \bra{|_{[S,T]}}_\sharp( \rho \eeta ) \in \scrP(C([S,T]; \R^d)$ is a solution to the MP associated to $\cL$ on $[S,T]$.
\end{proposition}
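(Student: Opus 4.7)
The plan is to reduce the martingale property on $[S,T]$ under $\tilde\eeta := (\rho\eeta)|_{[S,T]}$ to the martingale property on $[0,T]$ under $\eeta$, using the $\cF_S$-measurability of $\rho$ as the key lever. I view $\rho$ canonically as a bounded $\cF_S$-measurable function on $C([0,T];\R^d)$ (via precomposition with the restriction map, which only affects coordinates $e_t$ with $t\geq S$; combined with $\cF_S$-measurability, this forces $\rho$ to depend only on $\gamma_S$, and in particular is well-defined as a density on $C([0,T];\R^d)$). Then $\rho\eeta\in\scrP(C([0,T];\R^d))$ and its restriction $\tilde\eeta$ is a probability on $C([S,T];\R^d)$.

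First I would check the integrability condition \eqref{eq:martingale-integrability} on the interval $[S,T]$: by the change of variables formula and the bound $\rho\le\nor{\rho}_\infty$,
\[
\int\!\int_S^T\!(\abs{a_t}+\abs{b_t})\circ e_t\,dt\,d\tilde\eeta
=\int\!\int_S^T\!(\abs{a_t}+\abs{b_t})\circ e_t\,\rho\,dt\,d\eeta
\le \nor{\rho}_\infty\!\int\!\int_0^T\!(\abs{a_t}+\abs{b_t})\circ e_t\,dt\,d\eeta,
\]
which is finite since $\eeta$ solves the MP on $[0,T]$. In particular, for every $f\in\Algebra$ the candidate process
\[
\tilde M^f_t:=f_t\circ e_t-\int_S^t\bigl[\partial_s f+\cL_s f\bigr]\circ e_s\,ds,\qquad t\in[S,T],
\]
belongs to $L^1(\tilde\eeta)$, by boundedness of $f$ plus the above integrability.

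The core step is the martingale property with respect to the natural filtration $\tilde\cF=(\tilde\cF_t)_{t\in[S,T]}$ on $C([S,T];\R^d)$. Fix $S\le s\le t\le T$ and a bounded $\tilde\cF_s$-measurable $G$; I would observe first the identity $\tilde M^f_t-\tilde M^f_s=M^f_t-M^f_s$, where $M^f$ is the $(\cF_t)$-martingale under $\eeta$ from \eqref{eq:martingale}. Then, by change of variables,
\[
\int G\,(\tilde M^f_t-\tilde M^f_s)\,d\tilde\eeta
=\int \bigl(G\circ(\cdot)|_{[S,T]}\bigr)\,\rho\,(M^f_t-M^f_s)\,d\eeta.
\]
Here the function $G\circ(\cdot)|_{[S,T]}$ is $\sigma(e_u:S\le u\le s)$-measurable, hence $\cF_s$-measurable; and $\rho$ is $\cF_S$-measurable, thus also $\cF_s$-measurable because $s\geq S$. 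Both are bounded, so their product is bounded and $\cF_s$-measurable on $C([0,T];\R^d)$, and the martingale property of $M^f$ under $\eeta$ makes the right-hand side vanish.

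The main (modest) obstacle is conceptual rather than technical: one must keep careful track of two distinct filtrations living on two different path spaces, and realize that the hypothesis ``$\rho$ is $\cF_S$-measurable'' is precisely what is needed in order to pass $\rho$ through the conditional expectation $\E_\eeta[\,\cdot\mid\cF_s]$ for all $s\ge S$. Without this measurability, the weight $\rho$ would not be adapted to the filtration used to test the martingale property, and the argument would break down; with it, the proof is a direct bookkeeping exercise.
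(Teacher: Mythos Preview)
Your argument is correct. The paper does not actually give a proof of this proposition, stating just before it that the ``proofs are straightforward, so we omit them.'' Your direct verification---checking \eqref{eq:martingale-integrability} via the bound $\rho\le\nor{\rho}_\infty$, then testing the martingale increments $\tilde M^f_t-\tilde M^f_s=(M^f_t-M^f_s)\circ(\cdot)|_{[S,T]}$ against bounded $\tilde\cF_s$-measurable $G$ and using that both $G\circ(\cdot)|_{[S,T]}$ and $\rho$ are bounded $\cF_s$-measurable on $C([0,T];\R^d)$ for $s\ge S$---is exactly the routine computation the paper has in mind.
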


The analogous property for FPE's is obvious: if $(\nu_t)_{t\in (S,T)}$ is a solution of \eqref{eq:fpe-rn}, its restriction $(\nu_t)_{t \in (S,T)}$ is a solution of the FPE on $(S,T) \times \R^d$.



Solutions of FPE's and MP's are clearly stable with respect to convex combinations, as a consequence of Fubini's theorem. 

\begin{proposition}
Let $(Z, \cA, \bar{\nu})$ be a probability space and let $(\eeta_z)_{z \in Z} \subseteq \scrP (C[0,T];\R^d)$ be a Borel family, such that, for $\bar \nu$-a.e.\ $z \in Z$, $\eeta_z$ is a solution of the MP (associated to a fixed diffusion operator $\cL$). Moreover, let
\begin{equation}\label{eq:integrability-prop-convex-combination} \int_Z \int_0^T \int \bra{ \abs{b_t} + \abs{a_t}} d\eta_z dt d\bar{\nu}(z) < \infty\end{equation}
hold. Then, $A \mapsto \eeta(A) = \int \eeta_z(A)\, d\bar{\nu}(z)$ is a solution of the MP (associated to $\cL$).
\end{proposition}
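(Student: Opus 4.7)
My plan is to reduce each defining condition for $\eeta$ to the corresponding condition satisfied by the individual $\eeta_z$ via Fubini's theorem. Borel measurability of the family $(\eeta_z)_{z \in Z}$ makes $\eeta$ well-defined as a probability measure on $C([0,T];\R^d)$. First, I would verify the integrability requirement \eqref{eq:martingale-integrability}: writing $\eta_t := (e_t)_\sharp \eeta = \int_Z (e_t)_\sharp\eeta_z \, d\bar{\nu}(z)$ and applying Fubini yields
\[ \int \int_0^T (\abs{b_t} + \abs{a_t})\circ e_t\, dt\, d\eeta = \int_Z \int_0^T \int (\abs{b_t} + \abs{a_t}) \, d((e_t)_\sharp\eeta_z)\, dt\, d\bar{\nu}(z), \]
and the right-hand side coincides with \eqref{eq:integrability-prop-convex-combination}, hence is finite.

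For the martingale property, I would use the remark following \eqref{eq:martingale} to restrict attention to test functions $f \in \Algebra_c$. Fix such an $f$, times $0 \le s \le t \le T$, and a bounded $\cF_s$-measurable functional $\Phi: C([0,T];\R^d) \to \R$. Set
\[ M^f_u := f_u \circ e_u - \int_0^u [\partial_r f + \cL_r f]\circ e_r\, dr, \qquad u \in [0,T]. \]
Compact support of $f$ together with uniformly bounded $\partial_t f$, $\nabla f$, $\nabla^2 f$ yields the pointwise bound
\[ \abs{M^f_t - M^f_s} \le C\bra{1 + \int_0^T (\abs{a_r} + \abs{b_r})\circ e_r\, dr}, \]
with $C$ depending only on $\nor{f}_{C^{1,2}_{t,x}}$. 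By the integrability just verified, the right-hand side is $\eeta$-integrable and, for $\bar\nu$-a.e.\ $z$, also $\eeta_z$-integrable, so Fubini applies and gives
\[ \int \Phi \bra{M^f_t - M^f_s}\, d\eeta = \int_Z \sqa{\int \Phi \bra{M^f_t - M^f_s}\, d\eeta_z}\, d\bar{\nu}(z) = 0, \]
each inner integral vanishing because $\eeta_z$ solves the MP. As $s \le t$ and $\Phi$ were arbitrary, this establishes the martingale property for $\eeta$.

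The only real (and minor) point of care is precisely the choice of test class. For generic $f \in \Algebra$, the process $M^f$ is only locally bounded, along the stopping times $\tau_n$ introduced after \eqref{eq:martingale-integrability}, so applying Fubini directly would require working with the stopped processes $M^f_{\cdot \wedge \tau_n}$ and then letting $n \to \infty$; restricting from the outset to $\Algebra_c$, which is legitimate by the density remark, trivializes the domination and makes the exchange of integrals immediate.
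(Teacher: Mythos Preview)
Your proof is correct and matches the paper's approach exactly: the paper omits the argument entirely, remarking only that stability under convex combinations is ``a consequence of Fubini's theorem'', which is precisely what you carry out. One small remark: the restriction to $\Algebra_c$ in your final paragraph buys nothing, since for any $f \in \Algebra$ the derivatives $\nabla f$, $\nabla^2 f$ are already uniformly bounded and the same domination $|M^f_t - M^f_s| \le C\bigl(1 + \int_0^T (|a_r|+|b_r|)\circ e_r\, dr\bigr)$ holds---so Fubini applies directly without passing through stopping times or compact support.
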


A somewhat converse result, for disintegration with respect to the initial law, is a  consequence of stability of martingales under conditional expectations with respect to the $\sigma$-algebra $\cF_0$. 

\begin{proposition}\label{prop:disintegration-mp}
Let $\eeta$ be a solution of the MP and let $(\eeta(x))_{x \in \R^d}$ be a regular conditional probability for $\eeta$ with respect to $e_0$. Then, for $\eta_0$-a.e.\ $x \in \R^d$, $\eeta(x)$ is a  solution of the MP associated to $\cL$, with initial law $\delta_x$.
\end{proposition}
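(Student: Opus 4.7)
The plan is to use disintegration directly on the martingale identity that characterizes $\eeta$. Recall that the martingale property for $\eeta$ is equivalent to the requirement that, for every $f \in \Algebra$, every $0 \le s \le t \le T$, and every bounded $\cF_s$-measurable $G$, one has $\int G \cdot (M^f_t - M^f_s) \, d\eeta = 0$, where $M^f_t := f_t \circ e_t - \int_0^t [\partial_r f + \cL_r f] \circ e_r \, dr$. I would like to transfer this identity to $\eeta(x)$ for $\eta_0$-a.e.\ $x$, by testing against $G$ multiplied by arbitrary bounded Borel $\phi \circ e_0$ and applying the disintegration $\eeta = \int \eeta(x) \, d\eta_0(x)$.

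First, I would handle the integrability and initial law. From \eqref{eq:martingale-integrability} for $\eeta$ and Fubini applied to the disintegration, for $\eta_0$-a.e.\ $x$ we obtain $\int_0^T \int (\abs{a_t} + \abs{b_t}) \circ e_t \, d\eeta(x) \, dt < \infty$, so condition \eqref{eq:martingale-integrability} holds for $\eeta(x)$; and by the defining property of a regular conditional probability, $(e_0)_\sharp \eeta(x) = \delta_x$ for $\eta_0$-a.e.\ $x$. Second, for the martingale property, I would fix a countable family $\mathcal{D} \subseteq \Algebra$ dense in $\Algebra$ for the $C^{1,2}_{t,x}$ norm, a countable dense $T_0 \subseteq [0,T]$, and a countable collection $\mathcal{G}$ of bounded continuous cylindrical functions $G(\gamma) = H(\gamma_{r_1},\dots,\gamma_{r_k})$ with $r_i \in T_0$ and $H$ from a countable family dense in $C_b(\R^{dk})$, rich enough to be measure-determining on each $\cF_s$ with $s \in T_0$.

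Since for every $s \in T_0$, every $G \in \mathcal{G}$ with $r_i \le s$, and every Borel $A \subseteq \R^d$, the function $\mathbf{1}_A(e_0) \cdot G$ is bounded and $\cF_s$-measurable, the martingale identity for $\eeta$ gives
\[
\int_A \left[ \int G \cdot (M^f_t - M^f_s) \, d\eeta(x) \right] d\eta_0(x) = \int \mathbf{1}_A(e_0) \, G \,(M^f_t - M^f_s) \, d\eeta = 0
\]
for all $f \in \mathcal{D}$ and $s \le t$ in $T_0$. Since $A$ is arbitrary, $\int G (M^f_t - M^f_s) \, d\eeta(x) = 0$ for $\eta_0$-a.e.\ $x$ (with the exceptional set depending on the parameters). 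Intersecting countably many such full-measure sets, I would extract a single Borel set $\Omega_0 \subseteq \R^d$ with $\eta_0(\Omega_0) = 1$ on which all these countably many identities hold simultaneously.

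Finally, I would extend the identity from $(f,s,t,G)$ in the countable family to all of $\Algebra \times \{s \le t\} \times \{\text{bounded } \cF_s\text{-meas.}\}$ for each $x \in \Omega_0$: approximation in $f$ uses density of $\mathcal{D}$ in the $C^{1,2}_{t,x}$ norm together with dominated convergence, with the majorant controlled by the integrability secured in the first step; passage from $s,t \in T_0$ to general $s \le t$ uses the $\eeta(x)$-a.s.\ continuity of $t \mapsto M^f_t$; and passage from $\mathcal{G}$ to all bounded $\cF_s$-measurable $G$ is a standard monotone-class argument, since $\mathcal{G}$ generates $\cF_s$ and is closed under multiplication (up to approximation). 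The main obstacle I expect is the bookkeeping in this last extension step, specifically ensuring that the dominated-convergence estimates needed to approximate a general $f \in \Algebra$ hold uniformly in the test function $G$ and with bounds depending only on the ($\eeta(x)$-integrable) quantities $\abs{a_t} + \abs{b_t}$; this is where the first step pays off, as it provides a single dominating function valid for all the approximations simultaneously.
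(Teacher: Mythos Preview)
Your proof is correct and is precisely the detailed elaboration of what the paper states only as a one-line remark (``a consequence of stability of martingales under conditional expectations with respect to the $\sigma$-algebra $\cF_0$''), without providing further argument. Your careful handling of the countable-family reduction and the domination needed for the approximation in $f$ are exactly the points one has to check; the only cosmetic improvement would be to choose $\mathcal{G}$ to be genuinely multiplicative from the start (e.g.\ finite products of functions $h(e_{r_i})$ with $h$ ranging over a countable multiplicative algebra separating points), so that the monotone-class step is literally immediate rather than ``up to approximation''.
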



We conclude this section by introducing a suitable notion of flow associated to a diffusion operator, roughly consisting of Borel families of solutions of the MP, for a (large, in some sense) set of initial conditions in $\R^d$. Our aim is to study flows in the DiPerna-Lions sense (as extended by Figalli to MP's), thus, we  introduce the concept of ``regular flow'', where regularity is usually some growth and/or absolute continuity condition on the $1$-marginals, providing a selection criterion, yielding uniqueness in otherwise ill-posed problems. 
To study this notion in sufficient generality, we formulate such regularity conditions in terms of some set $\cR := \cR_{[0,T]}$ of narrowly continuous (probability curves that are) solutions of  \eqref{eq:fpe-rn}, which describe the ``admissible'' class of dynamics.  With this notation, we refer to any $\nu \in \cR$  as a $\cR$-regular solution of \eqref{eq:fpe-rn}, and we say that solution to the MP  is $\cR$-regular if the curve of its $1$-marginals is a $\cR$-regular solution of \eqref{eq:fpe-rn}. We also let $\cR_0 \subseteq \scrP(\R^d)$ be the set of all initial laws of the solutions belonging to $\cR_{[0,T]}$, which we regard as the set of initial distribution of mass that we are allowed to transport.

\begin{definition}[$\cR$-MF]
A Borel family $(\eeta(x))_{x \in \R^d} \subseteq \scrP (C([0,T];\R^d))$ is said to be a $\cR$-regular martingale flow ($\cR$-MF) (associated to $\cL$) if the initial law of $\eeta(x)$ is $\delta_x$, for every $x \in \R^d$, and, for every $\bar{\nu} \in \cR_0$, the probability measure $\int \eeta(x) d\bar{\nu}(x)$ is a $\cR$-regular solution to the MP (associated to $\cL$).
\end{definition}

We remark that we are not imposing that, for every $x \in \R^d$, $\eeta(x)$ is a $\cR$-regular solution to the MP associated to $\cL$; the requirement is only \emph{in average}, with respect to every admissible initial density $\bar{\nu} \in \cR_0$. Of course, from this condition and Proposition \ref{prop:disintegration-mp} we obtain that $\eeta(x)$ is a solution of the MP, for $\bar{\nu}$-a.e.\ $x \in \R^d$, for every $\bar{\nu} \in \cR_0$. For example, if we let $\cR_{[0,T]}$ be the set of all narrowly continuous solutions of \eqref{eq:fpe-rn}, then we operate no selection at all, and $\cR$-MF's are  Borel selections $(\eeta(x))_{x \in \R^d}$ of solutions of the MP, with $\eeta(x)$ starting at $\delta_x$, for every $x \in \R^d$. The DiPerna-Lions theory is obtained if we let $\cR$ be the set of all narrowly continuous  solutions $\nu_t = u_t \mathscr{L}^d \scrP(\R^d)$ of  \eqref{eq:fpe-rn} with $\nor{u}_{L^\infty_{t,x}} < \infty$.

We state (without implicitly assuming) some further properties of $\cR$-regular solutions of MP's and FPE's that are useful in the next section. The first property is a stability property with respect to pointwise domination: for every $\tilde{\nu}$, $\nu$, narrowly continuous solution of \eqref{eq:fpe-rn} such that, for some $C \ge 0$, 
\begin{equation}\label{eq:stability-cr-ambrosio} \text{ $\tilde{\nu_t} \le C \nu_t$, for every $t \in [0,T]$ and $\nu \in \cR_{[0,T]}$, then $\tilde{\nu} \in \cR_{[0,T]}$.}\end{equation}
A useful property is stability with respect to convex combinations, i.e., for any $\bar{\nu} \in \scrP(Z)$, 
\begin{equation}\label{eq:stability-convex-combination}
\text{ if, $\bar{\nu}$-a.e.\ $z \in Z$,  $\eeta_z$ is $\cR$-regular and \eqref{eq:integrability-prop-convex-combination} holds, then $\int \eeta_z d\bar{\nu}(z)$ is $\cR$-regular.}\end{equation}
A reasonable converse should be stability with respect to disintegration, but there are several formulations: given any $\cR$-regular $\eeta$, writing $(\eeta(x))_{x \in \R^d}$ for a regular conditional probability with respect to $e_0$, we may require that
\begin{equation}\label{eq:stability-convex-conditioning-bounded}
\text{ for any $\bar{\nu} \in \scrP(\R^d)$ with $\bar{\nu} \le C \eta_0$ for some $C>0$, then $\int \eeta(x) \bar{\nu} (x)$ is $\cR$-regular,}\end{equation}
or alternatively that
\begin{equation}\label{eq:stability-convex-conditioning}
\text{ for any $\bar{\nu} \in \cR_0$ with $\bar{\nu} \ll \eta_0$, then $\int \eeta(x) \bar{\nu} (x)$ is $\cR$-regular,}\end{equation}
or even that 
\begin{equation}\label{eq:stability-convex-conditioning-strong}\text{ for $\eta_0$-a.e.\ $x \in \R^d$, $\eeta(x)$ is a $\cR$-regular solution to the MP,}\end{equation}
which is a rather strong condition: it formally implies the others whenever \eqref{eq:stability-convex-combination} holds true. 
Let us also notice that it does not hold when we deal with the DiPerna-Lions class introduced above, while \eqref{eq:stability-convex-conditioning-bounded} as well as \eqref{eq:stability-convex-conditioning} hold true. Moreover, an application of Theorem \ref{thm:sp} shows that condition \eqref{eq:stability-convex-conditioning-bounded} is equivalent to \eqref{eq:stability-cr-ambrosio}.



Due to technical reasons, we must introduce a slight extension of all the notions above, taking into account a family $(\cR_{[s,T]})_{s \in [0,T]}$, where  each $\cR_{[s,T]}$ consists of narrowly continuous solutions of the FPE associated to $\cL$, on $[s,T]$. Then, we  let $\cR_s$ be the set of all $1$-marginals at time $s$ for solutions belonging to $\cR_{[s,T]}$, and we refer to $\cR$-regular solutions of FPE's and MP's on $[s,T]$, by natural extension of the definitions given on the interval $[0,T]$. We also assume that
\begin{equation}\label{eq:stability-cr-restriction} \text{ for any $r$, $s \in [0,T]$, with $r \le s$, $\nu \in \cR_{[r,T]}$,  then $(\nu_t)_{t \in [s,T]} \in \cR_{[s,T]}$,}\end{equation}
In particular, for any $\nu \in \cR_{[r,T]}$, one has $\nu_s \in \cR_s$. We also accordingly extend the notion of $\cR$-MF by considering a family $(\eeta(s,x))_{s \in [0,T], x \in \R^d}$, where $(\eeta(s,x)_{x \in \R^d}$ is a $\cR$-MF, for every $s \in [0,T]$ (notice that we are not requiring joint measurability of $(s,x) \mapsto \eeta(s,x)$).   

\begin{remark}[Markov property]\label{rem:markov}
With the notation introduced above, we can state the Markov property via Chapman-Kolmogorov equations, for a $\cR$-MF $(\eeta(s,x))_{s \in [0,T], x \in \R^d}$,
\begin{equation}\label{eq:chap-kolmogorov} \eta(s,x)_t = \int \eta(s,y)_t \, \eta(r,x)_s, \quad \text{ $\bar{\nu}$-a.e.\ $x\in \R^d$, for every $\bar{\nu} \in \cR_r$}\end{equation}
for every $r$, $s$, $t \in [0,T]$ with $r \le s \le t$. 

We obtain this property as a consequence of uniqueness, arguing e.g.\ as in \cite[Proposition 3.10]{figalli-sdes}. However, let us remark that it could be be of independent interest to study regular flows that are also Markov, extending e.g.\ the approach in \cite[Chapter 12]{stroock-varadhan}. Finally, much less is known about the strong Markov property for DiPerna-Lions flows, i.e., the validity of \eqref{eq:chap-kolmogorov} with stopping times in place of deterministic times -- perhaps one has to introduce some notion of ``regular'' stopping times. 
\end{remark}

\subsection{Equivalence between FPE's, MP's and flows}\label{sec:correspondence-rn}

The superposition principle provided by Theorem \ref{thm:sp} allows for establishing a neat correspondence between ``Eulerian'' and ``Lagrangian'' descriptions, transferring well-posedness results both ways. Such a connection is firmly established in the deterministic case, see e.g.\ \cite[\S 4]{ambrosio-crippa-lecture-notes}, and in the stochastic setting has been investigated e.g.\ in \cite[\S 2]{figalli-sdes}, in case of a DiPerna-Lions theory, and in \cite[\S 4]{ethier-kurtz}, for the classical theory (i.e., not in a selection sense). In this section, we provide a complete equivalence between well-posedness results for $\cR$-regular solutions of FPE's and MP's. 


{\bf FPE's $\Leftrightarrow$ MP's.}
Equivalence between existence result is straightforward, by lifting any solution $\nu$ of the FPE, we obtain existence of solutions of the MP, so we focus on uniqueness. A simple result which transfers ``uniqueness'' is the following one: the non trivial implication \emph{ii) $\Rightarrow$ i)} follows from lifting two different solutions $\nu^1$, $\nu^2$ (see also \cite[Theorem 2.3]{figalli-sdes}).

\begin{lemma}\label{lem:transfer-weak}
Let $\bar{\nu} \in \cR_0$. Then, the following conditions are equivalent:
\begin{enumerate}[i)]
\item there exists at most one $\cR$-regular solution $\nu$ of \eqref{eq:fpe-rn} with $\nu_0 = \bar{\nu}$.
\item if $\eeta^1$, $\eeta^2$ are $\cR$-regular solutions of the MP with $\eta_0^1 = \eta_0^2 = \bar{\nu}$, then $\eta_t^1 = \eta^2_t$, for $t \in [0,T]$.
\end{enumerate} 
\end{lemma}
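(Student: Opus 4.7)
The lemma is a direct application of the superposition principle (Theorem \ref{thm:sp}), together with the definition of $\cR$-regularity for solutions of the MP (which is formulated precisely in terms of the $1$-marginal curve being a $\cR$-regular solution of the FPE).

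The implication \emph{ii) $\Rightarrow$ i)} is the substantive one. Suppose $\nu^1$ and $\nu^2$ are two $\cR$-regular solutions of \eqref{eq:fpe-rn} with $\nu^1_0 = \nu^2_0 = \bar{\nu}$. By Remark \ref{rem:extension-weak-formulation}, both admit narrowly continuous representatives, so the hypotheses of Theorem \ref{thm:sp} apply, and we obtain solutions $\eeta^1, \eeta^2 \in \scrP(C([0,T];\R^d))$ of the MP associated to $\cL$ whose $1$-marginals coincide at every time with $\nu^1$ and $\nu^2$ respectively. Since by definition a solution of the MP is $\cR$-regular precisely when the curve of its $1$-marginals lies in $\cR_{[0,T]}$, and since $\nu^1, \nu^2 \in \cR_{[0,T]}$ by assumption, both $\eeta^1$ and $\eeta^2$ are $\cR$-regular. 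They share the initial law $\bar{\nu}$, so hypothesis \emph{ii)} forces $\eta^1_t = \eta^2_t$ for every $t \in [0,T]$, whence $\nu^1_t = \nu^2_t$ for every $t \in [0,T]$.

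The converse \emph{i) $\Rightarrow$ ii)} is immediate: given two $\cR$-regular solutions $\eeta^1, \eeta^2$ of the MP with $\eta^1_0 = \eta^2_0 = \bar{\nu}$, integrating \eqref{eq:martingale} against $\eeta^i$ shows that each curve of $1$-marginals $(\eta^i_t)_{t \in [0,T]}$ is a narrowly continuous solution of the FPE starting from $\bar{\nu}$, and it is $\cR$-regular by the very definition of $\cR$-regular MP solution. Hypothesis \emph{i)} then yields $\eta^1_t = \eta^2_t$ for every $t \in [0,T]$.

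The only point requiring care is to make sure the integrability condition \eqref{eq:fpe-integrability} transfers correctly so that Theorem \ref{thm:sp} applies; this is built into the definition of $\cR$-regular solutions of the FPE, since membership in $\cR_{[0,T]}$ presumes $\nu^i$ is a genuine weak solution (in particular $a,b \in L^1_{t,x}(|\nu^i|)$). No further approximation or disintegration is needed, which is why the proof is essentially a one-line consequence of Theorem \ref{thm:sp}.
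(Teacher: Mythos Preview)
Your argument is correct and matches the paper's own approach: the paper indicates only that the nontrivial implication \emph{ii) $\Rightarrow$ i)} follows by lifting two putatively distinct FPE solutions $\nu^1,\nu^2$ via the superposition principle (Theorem~\ref{thm:sp}), which is precisely what you carry out. Your treatment of \emph{i) $\Rightarrow$ ii)} and the remark on integrability are likewise in line with the surrounding text.
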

%

A stronger uniqueness result, for processes, can be obtained arguing as in \cite[Theorem 6.2.3]{stroock-varadhan} or \cite[Proposition 5.5]{figalli-sdes}. Let us point out that here there appears a small gap with the deterministic literature, since a different argument \cite[Theorem 9]{ambrosio-crippa-lecture-notes} shows uniqueness for MP's assuming only \eqref{eq:stability-cr-ambrosio}, while we must consider also intermediate $s \in [0,T]$ (since the  argument employed therein uses some conditioning which may not preserve the martingale property in general, but it does when the martingale is deterministic).

\begin{lemma}[transfer of uniqueness]
Let $\cR = (\cR_{[s,T]})_{s\in [0,T]}$ satisfy \eqref{eq:stability-cr-restriction} and \eqref{eq:stability-cr-ambrosio}, with $s$ in place of $0$, for $s \in [0,T]$. Then, the following conditions are equivalent:
\begin{enumerate}[i)]
\item for every $s \in [0,T]$ and $\bar{\nu} \in \cR_s$, there exists at most one $\nu \in \cR_{[s,T]}$ with $\nu_s = \bar{\nu}$.
\item for every $s \in [0,T]$, if $\eeta^1$, $\eeta^2$ are $\cR$-regular solutions of the MP on $[s,T]$, with $\eta_s^1 = \eta_s^2$, then $\eeta^1 = \eeta^2$.
\end{enumerate} 
\end{lemma}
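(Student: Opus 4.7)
The plan is to establish the two implications separately, with the nontrivial direction i) $\Rightarrow$ ii) proceeding by induction on the number of times entering the finite-dimensional marginals and relying in a crucial way on both stability assumptions on $\cR$.

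For ii) $\Rightarrow$ i), fix $s \in [0,T]$ and suppose $\nu^1,\nu^2 \in \cR_{[s,T]}$ share the same value $\bar\nu$ at time $s$. Applying the superposition principle (Theorem \ref{thm:sp}) on the interval $[s,T]$ lifts each $\nu^j$ to a solution $\eeta^j$ of the MP on $[s,T]$ whose $1$-marginals coincide with $\nu^j$; in particular each $\eeta^j$ is $\cR$-regular and has $\eta^1_s=\eta^2_s=\bar\nu$. Invoking ii) gives $\eeta^1=\eeta^2$, hence $\nu^1=\nu^2$ by projecting on $1$-marginals.

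For i) $\Rightarrow$ ii), fix $s \in [0,T]$ and two $\cR$-regular MP solutions $\eeta^1,\eeta^2$ on $[s,T]$ with $\eta^1_s=\eta^2_s$. Since a probability on $C([s,T];\R^d)$ is determined by its finite-dimensional marginals (by a standard monotone class argument on cylinder sets), it suffices to prove by induction on $n \ge 1$ that, for any $s=t_0 < t_1 < \dots < t_n \le T$ and any bounded Borel $\phi_0,\dots,\phi_n$, one has
\[ \int \prod_{k=0}^n \phi_k(e_{t_k}) \, d\eeta^1 = \int \prod_{k=0}^n \phi_k(e_{t_k}) \, d\eeta^2. \]
The base case $n=1$ is an instance of i) itself: the two curves $(\eta^j_t)_{t \in [s,T]}$ are $\cR$-regular solutions of the FPE on $[s,T]$ sharing the initial datum at $s$, so they coincide. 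For the inductive step, assume the equality up to $n-1$, fix $t_0,\dots,t_n$, and (up to splitting positive and negative parts) assume $\phi_k \ge 0$ bounded. Set $\rho := \prod_{k=0}^{n-1} \phi_k \circ e_{t_k}$, which is bounded and $\cF_{t_{n-1}}$-measurable, and consider $\tilde\eeta^j := (\rho\eeta^j)|_{[t_{n-1},T]}$. By Proposition \ref{prop:restriction}, $\tilde\eeta^j$ solves the MP on $[t_{n-1},T]$; moreover its $1$-marginals satisfy $\tilde\eta^j_t \le \|\rho\|_\infty \eta^j_t$, so by restriction stability \eqref{eq:stability-cr-restriction} (with $r=s$) and domination stability \eqref{eq:stability-cr-ambrosio} (applied at time $t_{n-1}$) it is $\cR$-regular on $[t_{n-1},T]$. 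The initial distribution $\tilde\eta^j_{t_{n-1}}$ is characterized for bounded Borel $\psi$ by
\[ \int \psi \, d\tilde\eta^j_{t_{n-1}} = \int \psi(e_{t_{n-1}})\prod_{k=0}^{n-1}\phi_k(e_{t_k}) \, d\eeta^j, \]
and by the inductive hypothesis this does not depend on $j$. Applying i) at time $t_{n-1}$ we conclude $\tilde\eta^1_t=\tilde\eta^2_t$ for all $t\in[t_{n-1},T]$; testing with $\psi=\phi_n$ at $t=t_n$ yields the desired identity for $n$ times and closes the induction.

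The main delicate point is the verification of $\cR$-regularity for the conditioned measure $\tilde\eeta^j$: this is precisely what forces us to assume both \eqref{eq:stability-cr-restriction} (to pass to the subinterval $[t_{n-1},T]$) and \eqref{eq:stability-cr-ambrosio} (to absorb the bounded conditional density), and it is the reason why uniqueness must be postulated in i) at every intermediate time $s \in [0,T]$ rather than only at $s=0$. A minor subtlety is that $\rho$ is genuinely $\cF_{t_{n-1}}$-measurable in the natural filtration (not only up to $\eeta^j$-null sets), so Proposition \ref{prop:restriction} applies without any completion of the filtration.
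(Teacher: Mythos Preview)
Your argument is correct and follows the same route as the paper's own proof: superposition for ii) $\Rightarrow$ i), and for i) $\Rightarrow$ ii) induction on finite-dimensional marginals, conditioning by a bounded $\cF_{t_{n-1}}$-measurable weight and invoking the two stability properties of $\cR$. The only minor slip is that Proposition~\ref{prop:restriction} (and membership in $\cR$, which consists of probability curves) is stated for probability densities, so $\rho$ should be normalized with respect to $\eeta^j$; by the inductive hypothesis the normalizing constant $\int\rho\,d\eeta^j$ is independent of $j$ (and the degenerate case where it vanishes is trivial), so this does not affect the argument.
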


\begin{proof}
\emph{ii) $\Rightarrow$ i)}. As in Lemma \ref{lem:transfer-weak}, $\nu \in \cR_{[s,T]}$ with $\nu_s = \bar{\nu}$ we consider a ($\cR$-regular) superposition solution $\eeta$: the uniqueness assumption entails that its $1$-marginals are uniquely identified. \emph{i) $\Rightarrow$ ii)}. The proof relies (implicitly) on the Markov property. Let $s \in [0,T]$ and $\eeta^1$, $\eeta^2$ be solutions of the MP on $[s,T]$, with $\eta^1_s = \eta^2_s$. To deduce that $\eeta^1 = \eeta^2$, we show that, for every $n \ge 1$, the $n$-marginals of $\eeta^1$ an $\eeta^2$ coincide, i.e., for any $s \le t_1 < \ldots < t_n \le T$ and $A_1, \ldots, A_n \subseteq \R^d$ Borel, it holds
\begin{equation}\label{eq:induction} \eeta^1( e_{t_1} \in A_1, \ldots, e_{t_n} \in A_n)  = \eeta^2( e_{t_1} \in A_1, \ldots, e_{t_n} \in A_n).\end{equation}

We argue by induction on $n \ge 1$, the case $n = 1$  being a consequence of \emph{i) $\Rightarrow$ ii)} in Lemma \ref{lem:transfer-weak} and property \eqref{eq:stability-cr-restriction},  i.e.\ we use the fact that $(\eta^i_t)_{t\in[s,T]}$ for $i \in \cur{1,2}$ are $\cR$-regular solutions, with $\eta^1_s = \eta^2_s$. To perform the step from $n$ to $n+1$, we argue as follows. For fixed $s \le t_1 < \ldots < t_n < t_{n+1}\le T$ and $A_1, \ldots, A_n, A_{n+1} \subseteq \R^d$ Borel sets, we let
\[ \rho := \frac{ \prod_{i=1}^n \chi_{A_i} (e_{t_i})}{\eeta^1( e_{t_1} \in A_1, \ldots, e_{t_n} \in A_n)} : C([s,T];\R^d) \to [0,\infty),\]
i.e., the density of $\eeta^1$ conditioned with respect to  $\bigcap_{i=1}^n \cur{e_{t_i} \in A_i}$. We assume that the denominator above is not null: otherwise there is nothing to prove. Notice also that the inductive assumption gives $(e_{t_n})_\sharp (\rho \eeta^1) = (e_{t_n})_\sharp (\rho \eeta^2)$, since it amounts to \eqref{eq:induction} with $A_n \cap B$ in place of $A_n$, for every $B \subseteq \R^d$ Borel.

For $i \in \cur{1,2}$, we let $\eeta^i_\rho$ be the push-forward of the measure $\rho \eeta^i$ with respect to the natural restriction from $[s,T]$ to $[t_n, T]$,  and notice that both are $\cR$-regular solutions of the MP on $[t_n, T]$, with identical laws at $t_n$,
\[(\eta^1_\rho)_{t_n} = (e_{t_n})_\sharp (\rho \eeta^1) = (e_{t_n})_\sharp (\rho \eeta^2)=(\eta^2_\rho)_{t_n},\]
by Lemma \ref{prop:restriction} and \eqref{eq:stability-cr-restriction}. By the implication \emph{i) $\Rightarrow$ ii)} in Lemma \ref{lem:transfer-weak}, we deduce in particular that
$(\eta^1_\rho)_{t_{n+1}} =  (\eta^2_\rho)_{t_{n+1}}$, thus
\[ \frac{ \eeta^1( e_{t_1} \in A_1, \ldots, e_{t_n} \in A_n,  e_{t_{n+1}} \in A_{n+1}) }{ \eeta^1( e_{t_1} \in A_1, \ldots,  e_{t_n} \in A_n)}  = \frac{ \eeta^2( e_{t_1} \in A_1, \ldots, e_{t_n} \in A_n,  e_{t_{n+1}} \in A_{n+1}) }{ \eeta^2( e_{t_1} \in A_1, \ldots, e_{t_n} \in A_n)},\]
hence we deduce the case $n +1$ of \eqref{eq:induction}.
\end{proof}

{\bf MP's $\Leftrightarrow$ flows.} 
In this case,  both notions are ``Lagrangian'', thus there  is no need of the superposition principle here: most of the argument are just consequences of convexity and disintegration of measures.


Although our actual well-posedness results are in the DiPerna-Lions case, where uniqueness  is understood up to $\mm$-a.e.\ equivalence, where $\mm$ is some ``reference'' $\sigma$-finite Borel measure on $\R^d$ (i.e., $\mm = \scrL^d$),  for the sake of completeness, we provide a  result assuming \eqref{eq:stability-convex-conditioning-strong}.


\begin{proposition}
Consider the following conditions:
\begin{enumerate}[i)]
\item for every $\bar{\nu} \in \cR_0$, there exists a unique $\cR$-regular solution $\eeta^{\bar{\nu}}$ to the MP, with initial law $\bar{\nu}$, and the map $\bar{\nu} \mapsto \eeta^{\bar{\nu}}$ is Borel;
\item for every $\bar{\nu} \in \cR_0$ and $\cR$-MF's $(\eeta^1(x))_{x \in \R^d}$, $(\eeta^2(x))_{x \in \R^d}$, one has $\eeta^1 = \eeta^2$, $\bar{\nu}$-a.e.\ on $\R^d$. 
\end{enumerate}
Then, it always holds \emph{i) $\Rightarrow$ ii)}, while \emph{ii) $\Rightarrow$ i)} holds true provided that some $\cR$-MF exists and both \eqref{eq:stability-convex-combination} and \eqref{eq:stability-convex-conditioning-strong} hold.
\end{proposition}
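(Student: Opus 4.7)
The plan is to route both implications through disintegration with respect to $e_0$ (Proposition~\ref{prop:disintegration-mp}), together with the fact that each $\eeta(x)$ appearing in a $\cR$-MF is, by definition, concentrated on paths starting at $x$, hence automatically furnishes a version of the regular conditional probability of any average $\int \eeta(x)\,d\bar{\nu}(x)$ with respect to $e_0$.

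For the direction \emph{i) $\Rightarrow$ ii)}, fix $\bar{\nu}\in\cR_0$ and two $\cR$-MF's $(\eeta^1(x))_{x}$, $(\eeta^2(x))_{x}$. By the definition of $\cR$-MF, the averages $\eeta^i:=\int\eeta^i(x)\,d\bar{\nu}(x)$, $i\in\{1,2\}$, are $\cR$-regular solutions to the MP with initial law $\bar{\nu}$, so hypothesis \emph{i)} forces $\eeta^1=\eeta^2$. Essential uniqueness of disintegration of this common measure with respect to $e_0$ then yields $\eeta^1(x)=\eeta^2(x)$ for $\bar{\nu}$-a.e.\ $x$.

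For the direction \emph{ii) $\Rightarrow$ i)}, assume that some $\cR$-MF $(\eeta(x))_{x}$ exists and that \eqref{eq:stability-convex-combination}, \eqref{eq:stability-convex-conditioning-strong} hold, and fix $\bar{\nu}\in\cR_0$. Existence in \emph{i)} is realised by $\eeta^{\bar{\nu}}:=\int\eeta(x)\,d\bar{\nu}(x)$; Borel dependence on $\bar{\nu}$ follows from the standard observation that, for every $\phi\in C_b(C([0,T];\R^d))$, the map $x\mapsto \int \phi\,d\eeta(x)$ is Borel, so $\bar{\nu}\mapsto \int\phi\,d\eeta^{\bar{\nu}}$ is Borel as well. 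For uniqueness, let $\tilde{\eeta}$ be another $\cR$-regular MP solution with initial law $\bar{\nu}$ and let $(\tilde{\eeta}(x))_{x}$ be a version of its regular conditional probability with respect to $e_0$. By Proposition~\ref{prop:disintegration-mp} and \eqref{eq:stability-convex-conditioning-strong} there is a Borel set $E\subseteq\R^d$ with $\bar{\nu}(E)=1$ such that, for every $x\in E$, $\tilde{\eeta}(x)$ is a $\cR$-regular MP solution starting at $\delta_x$. I would then form the spliced family
\[
 \tilde{\eeta}^*(x):=\tilde{\eeta}(x)\text{ for }x\in E,\qquad \tilde{\eeta}^*(x):=\eeta(x)\text{ for }x\notin E,
\]
which is Borel, has initial law $\delta_x$ at every $x$, and (being a modification on a $\bar{\nu}$-null set) still disintegrates $\tilde{\eeta}$ above $\bar{\nu}$. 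The crucial claim is that $(\tilde{\eeta}^*(x))_{x}$ is itself a $\cR$-MF: for arbitrary $\bar{\mu}\in\cR_0$, applying \eqref{eq:stability-convex-conditioning-strong} to the $\cR$-regular solution $\int\eeta(x)\,d\bar{\mu}(x)$ shows that $\eeta(x)$ is $\cR$-regular for $\bar{\mu}$-a.e.\ $x$, while on $E$ the replacement $\tilde{\eeta}(x)$ is $\cR$-regular by construction; so $\tilde{\eeta}^*(x)$ is $\cR$-regular for $\bar{\mu}$-a.e.\ $x$, and \eqref{eq:stability-convex-combination} promotes the average $\int\tilde{\eeta}^*(x)\,d\bar{\mu}(x)$ to a $\cR$-regular MP solution. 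Granted the claim, hypothesis \emph{ii)} applied to $(\eeta(x))_{x}$ and $(\tilde{\eeta}^*(x))_{x}$ gives $\tilde{\eeta}^*(x)=\eeta(x)$ for $\bar{\nu}$-a.e.\ $x$; since $\bar{\nu}(E)=1$, this forces $\tilde{\eeta}(x)=\eeta(x)$ for $\bar{\nu}$-a.e.\ $x$, and integration against $\bar{\nu}$ yields $\tilde{\eeta}=\eeta^{\bar{\nu}}$.

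The main obstacle I anticipate is precisely the gluing step above: the family $(\tilde{\eeta}^*(x))_{x}$ must satisfy the $\cR$-MF property against \emph{every} admissible initial law $\bar{\mu}\in\cR_0$, not only $\bar{\nu}$. Both the $\cR$-regularity and the integrability condition \eqref{eq:integrability-prop-convex-combination} for the average need to be controlled for $\bar{\mu}$ that may be singular with respect to $\bar{\nu}$; this is the purpose of combining the strong disintegration stability \eqref{eq:stability-convex-conditioning-strong} (which forces the component $\eeta(x)$ on $E^c$ to be $\cR$-regular $\bar{\mu}$-a.e.) with the convex-combination stability \eqref{eq:stability-convex-combination}, possibly after a further modification of $\tilde{\eeta}^*$ on a $\bar{\nu}$-null subset of $E$ so as to discard the exceptional points where $\tilde{\eeta}(x)$ fails to be $\bar{\mu}$-integrable. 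The remaining measurability considerations (Borel choice of $E$, joint measurability of $x\mapsto \tilde{\eeta}(x)$) are routine but form the technical core of the full argument.
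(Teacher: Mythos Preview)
Your argument for \emph{i) $\Rightarrow$ ii)} is correct and coincides with the paper's: both averages $\int\eeta^i(x)\,d\bar{\nu}(x)$ are $\cR$-regular with initial law $\bar{\nu}$, hence equal by \emph{i)}, and essential uniqueness of disintegration finishes.

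For \emph{ii) $\Rightarrow$ i)}, the obstacle you flag is genuine and the fix you suggest does not close it. To apply \eqref{eq:stability-convex-combination} to the spliced family $(\tilde{\eeta}^*(x))_x$ against an arbitrary $\bar{\mu}\in\cR_0$, you need the integrability \eqref{eq:integrability-prop-convex-combination}, which on the set $E$ becomes $\int_E\bigl(\int_0^T\int(|a_t|+|b_t|)\,d\tilde{\eta}_t(x)\,dt\bigr)\,d\bar{\mu}(x)<\infty$. Even after shrinking $E$ to the $\bar{\nu}$-full set where the inner integral is finite, that function may be unbounded on $E$, and nothing controls its $\bar{\mu}$-integral when $\bar{\mu}$ is not dominated by $\bar{\nu}$. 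A ``further modification on a $\bar{\nu}$-null set'' cannot help: the exceptional set depends on $\bar{\mu}$, and an uncountable union over $\bar{\mu}\in\cR_0$ need not remain $\bar{\nu}$-null.

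The paper sidesteps this by a preliminary reduction you did not make. Disintegrating $\eeta^1,\eeta^2$ with respect to $e_0$ and invoking \eqref{eq:stability-convex-conditioning-strong}, one may assume from the outset that $\bar{\nu}=\delta_{\bar{x}}$ for a single point $\bar{x}$. The splice is then at \emph{one point only}: set $\eeta^i(x):=\eeta(x)$ for $x\neq\bar{x}$ and $\eeta^i(\bar{x}):=\eeta^i$. For any $\bar{\mu}\in\cR_0$,
\[
\int\eeta^i(x)\,d\bar{\mu}(x)=\int_{\{x\neq\bar{x}\}}\eeta(x)\,d\bar{\mu}(x)+\bar{\mu}(\{\bar{x}\})\,\eeta^i,
\]
and the integrability \eqref{eq:integrability-prop-convex-combination} is now trivial: the first summand is dominated by the $\cR$-MF average, and the second is a fixed multiple of a single MP solution with known finite integrability. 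Hypothesis \emph{ii)} then gives $\eeta^1=\eeta^1(\bar{x})=\eeta^2(\bar{x})=\eeta^2$ since $\delta_{\bar{x}}$-a.e.\ equality means equality at $\bar{x}$. The reduction to a Dirac initial law is precisely the missing idea that makes the integrability for arbitrary $\bar{\mu}$ automatic.
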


\begin{proof} \emph{i) $\Rightarrow$ ii)} is straightforward, since regular conditional probabilities are essentially unique (a $\cR$-MF is in particular a regular conditional probability of $\int \eeta(x) d\bar{\nu}(x)$ with respect to $e_0$).

To show the implication \emph{ii) $\Rightarrow$ i)}, let $\bar{\nu} \in \cR_0$ and $\eeta^1$, $\eeta^2$ be $\cR$-regular solutions of the MP, with initial law $\bar{\nu}$. By disintegrating with respect to $e_{0}$ and \eqref{eq:stability-convex-conditioning-strong} we may assume that $\bar{\nu} = \delta_{\bar{x}}$, for some $\bar{x} \in \R^d$. Let $(\eeta(x))_{x \in \R^d}$ be a $\cR$-MF (here we use the existence assumption) and define
\[ \eeta^i(x) := \chi_{\cur{ x \neq \bar{x} }} \eeta(x) + \chi_{\cur{ x = \bar{x} }} \eeta^i, \quad \text{ for $i \in \cur{1,2}$,}\]
which are two different $\cR$-MF's since, for any $\bar{\mu} \in \cR_0$, it holds, by \eqref{eq:stability-convex-combination}, $\int \eeta^{i}(x) d\bar{\mu}(x) =  \int_{\cur{x \neq \bar{x}}} \eeta(x) d\bar{\mu}(x) + \bar{\mu}(\bar x) \eeta^i \in \cR$.
\end{proof}



The result above is rather unsatisfactory in terms of existence of $\cR$-MF's, which seems a delicate problem, in general. For example, existence may follow if one assumes the validity of assumption \emph{i)}, \eqref{eq:stability-convex-combination}, \eqref{eq:stability-convex-conditioning-strong} and that $\cR_0$ is a  Borel of probability measures. Then, for every $x \in \R^d$ such that $\delta_x \in \cR_0$, there exists a unique $\cR$-regular solution of the MP, and by suitable definition for $x$ not in such a set, we obtain a $\cR$-MF (which is then unique in the sense above). An easier existence result follows if we assume the domination condition
\begin{equation}\label{eq:domination} \text{ for some $\sigma$-finite measure $\mm$, it holds $\bar{\nu} \ll \mm$, for every $\bar{\nu} \in \cR_0$,}\end{equation}
as in the DiPerna-Lions case. We also assume that $\mm$ is minimal in the sense that, for every $A\subseteq \R^d$ Borel with $\mm(A)>0$, there exists some $\bar{\nu} \in \cR_0$ with $\bar{\nu}(A)>0$.

\begin{proposition}
Let \eqref{eq:domination} hold, and consider the following conditions:
\begin{enumerate}[i)]
\item for every $\bar{\nu} \in \cR_0$, there exists a unique $\cR$-regular solution $\eeta^{\bar{\nu}}$ to the MP, with initial law $\bar{\nu}$, and the map $\bar{\nu} \mapsto \eeta^{\bar{\nu}}$ is Borel;
\item there exists a  $\cR$-MF's $(\eeta(x))_{x \in \R^d}$ and $\cR$-MF's are $\mm$-a.e.\ unique, i.e.\ if $(\eeta^1(x))_{x \in \R^d}$ and $(\eeta^2(x))_{x \in \R^d}$ are $\cR$-MF's, then $\eeta^1 = \eeta^2$, $\mm$-a.e.\ in $\R^d$.
\end{enumerate}
If \eqref{eq:stability-convex-combination} and \eqref{eq:stability-convex-conditioning-bounded} holds, then \emph{i) $\Rightarrow$ ii)}. If  \eqref{eq:stability-convex-combination} and \eqref{eq:stability-convex-conditioning} are satisfied, then \emph{ii) $\Rightarrow$ i)}.
\end{proposition}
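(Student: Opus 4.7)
In both directions the idea is to relate a candidate $\cR$-regular MP solution and a Borel family $(\eeta(x))_{x \in \R^d}$ via disintegration with respect to $e_0$ (Proposition \ref{prop:disintegration-mp}).

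For \emph{(i) $\Rightarrow$ (ii)}, first disintegrate each uniquely determined $\eeta^{\bar{\nu}}$ (for $\bar{\nu} \in \cR_0$) to obtain a family $(\eeta^{\bar{\nu}}(x))$ defined $\bar{\nu}$-a.e. The key compatibility step: for $\bar{\nu}_1,\bar{\nu}_2 \in \cR_0$ set $\tilde{\nu} := (\bar{\nu}_1+\bar{\nu}_2)/2 \in \cR_0$ by \eqref{eq:stability-convex-combination}; since $\bar{\nu}_i \le 2\tilde{\nu}$, condition \eqref{eq:stability-convex-conditioning-bounded} applied to $\eeta^{\tilde{\nu}}$ yields that $\int \eeta^{\tilde{\nu}}(x)\, d\bar{\nu}_i(x)$ is a $\cR$-regular MP solution with initial law $\bar{\nu}_i$, hence equal to $\eeta^{\bar{\nu}_i}$ by uniqueness in (i); essential uniqueness of disintegrations then forces $\eeta^{\bar{\nu}_1}(x) = \eeta^{\bar{\nu}_2}(x)$ $\mm$-a.e.\ on the overlap of the density-positive sets $E_i := \{d\bar{\nu}_i/d\mm>0\}$. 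Next, exploit $\sigma$-finiteness and minimality of $\mm$ via an exhaustion argument to produce a countable family $(\bar{\nu}_k) \subseteq \cR_0$ with $\bigcup_k E_k$ of full $\mm$-measure, and define $\eeta(x) := \eeta^{\bar{\nu}_k}(x)$ on $B_k := E_k \setminus \bigcup_{j<k}E_j$ and arbitrarily on the $\mm$-null residual; Borel measurability of $x \mapsto \eeta(x)$ follows from the Borel dependence in (i) together with the disintegration theorem. The MF identity $\int \eeta(x)\, d\bar{\nu}(x) = \eeta^{\bar{\nu}}$ for $\bar{\nu} \in \cR_0$ is then verified by splitting along $(B_k)$ and invoking compatibility on each $B_k \cap \{d\bar{\nu}/d\mm>0\}$ (using that $\bar{\nu}$ charges no $\mm$-null set). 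The $\mm$-a.e.\ uniqueness of $\cR$-MFs is immediate: any two candidates must produce $\eeta^{\bar{\nu}}$ when integrated against $\bar{\nu} \in \cR_0$, hence agree $\bar{\nu}$-a.e., and the covering upgrades this to $\mm$-a.e.

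For \emph{(ii) $\Rightarrow$ (i)}, existence is immediate: $\eeta^{\bar{\nu}} := \int \eeta(x)\, d\bar{\nu}(x)$ is $\cR$-regular by the MF property, and Borel dependence is inherited from $(\eeta(x))$ via measurability of the integration map. For uniqueness, let $\eeta_0$ be any other $\cR$-regular MP with initial law $\bar{\nu}$, disintegrate $\eeta_0 = \int \eeta_0(x)\, d\bar{\nu}(x)$, and build the candidate flow $\tilde{\eeta}(x) := \eeta_0(x)$ on $E := \{d\bar{\nu}/d\mm>0\}$ and $\tilde{\eeta}(x) := \eeta(x)$ on $E^c$. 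Once it is known that $(\tilde{\eeta}(x))$ is a $\cR$-MF, the $\mm$-a.e.\ uniqueness hypothesis from (ii) forces $\tilde{\eeta}(x) = \eeta(x)$ $\mm$-a.e., hence $\eeta_0(x) = \eeta(x)$ $\bar{\nu}$-a.e.\ (since $\bar{\nu} \ll \mm$), and integrating yields $\eeta_0 = \eeta^{\bar{\nu}}$.

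The main obstacle, under the weaker hypothesis \eqref{eq:stability-convex-conditioning} available here, is precisely the verification that $\tilde{\eeta}$ is a $\cR$-MF, i.e., that $\int \tilde{\eeta}(x)\, d\bar{\mu}(x)$ is $\cR$-regular for every $\bar{\mu} \in \cR_0$. The naive decomposition $\bar{\mu} = \bar{\mu}|_E + \bar{\mu}|_{E^c}$ produces sub-probabilities whose normalizations need not belong to $\cR_0$, so \eqref{eq:stability-convex-conditioning} cannot be applied directly. The plan is to route through the blended $\cR$-regular MP $\hat{\eeta} := \tfrac{1}{2}\bigl( \int \eeta(x)\, d\bar{\mu}(x) + \eeta_0 \bigr)$ with initial law $\hat{\nu} := (\bar{\mu}+\bar{\nu})/2 \in \cR_0$: an explicit disintegration gives $\hat{\eeta}(x) = \tfrac{1}{2}(f(x)\eeta(x) + g(x)\eeta_0(x))$ with $f := d\bar{\mu}/d\hat{\nu}$ and $g := d\bar{\nu}/d\hat{\nu}$ satisfying $f+g=2$, and since $\bar{\mu} \ll \hat{\nu}$, \eqref{eq:stability-convex-conditioning} applied to $\hat{\eeta}$ yields $\cR$-regularity of the weighted combination $\int \hat{\eeta}(x)\, d\bar{\mu}(x) = \int \bigl((f^2/2)\eeta(x) + (fg/2)\eeta_0(x)\bigr)\, d\hat{\nu}(x)$. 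Extracting from this the indicator-weighted form $\int \tilde{\eeta}(x)\, d\bar{\mu}(x)$ is the delicate technical step, and is achieved by iterating such convex-combination constructions and exploiting the cone structure of $\cR$-regular solutions granted by \eqref{eq:stability-convex-combination}, effectively transferring regularity from the ``smooth'' weights $(f^2/2,fg/2)$ to the ``sharp'' weights $(\chi_{E^c},\chi_E)$.
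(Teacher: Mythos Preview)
Your direction \emph{i) $\Rightarrow$ ii)} is essentially correct and close in spirit to the paper, but the paper streamlines the construction considerably. Rather than building the flow by a countable exhaustion $(B_k)_k$, the paper observes at the outset (using \eqref{eq:stability-convex-combination}, $\sigma$-finiteness, and minimality of $\mm$) that there exists a single $u\mm \in \cR_0$ equivalent to $\mm$; one then simply sets $\eeta(x) := \eeta^{u}(x)$, where $(\eeta^{u}(x))_x$ is the disintegration of the unique $\cR$-regular solution with initial law $u\mm$. The compatibility identity \eqref{eq:identity-mf-wellposedness} (which you also derive, via \eqref{eq:stability-convex-conditioning-bounded}) then immediately verifies the MF property for every $v\mm \in \cR_0$, with no patching needed.

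Your direction \emph{ii) $\Rightarrow$ i)} has a genuine gap. You correctly identify that verifying the modified family $\tilde{\eeta}(x)$ is a $\cR$-MF is the obstacle, but your proposed workaround---iterating convex combinations to pass from the smooth weights $(f^2/2, fg/2)$ to the indicator weights $(\chi_{E^c}, \chi_E)$---is not carried out, and it is unclear how it could be: $\cR$-regularity is preserved only under convex combinations \eqref{eq:stability-convex-combination}, not subtraction, so there is no evident mechanism to ``sharpen'' the weights. The paper's argument avoids the construction of $\tilde{\eeta}(x)$ altogether. Given the alternative $\cR$-regular solution $\tilde{\eeta}$ with initial law $\tilde{\eta}_0$, and $u\mm \in \cR_0$ equivalent to $\mm$ as above, the paper forms
\[
\bar{\eeta} := \tfrac{1}{2}\tilde{\eeta} + \tfrac{1}{2}\int \eeta(x)\, u(x)\, d\mm(x),
\]
which is $\cR$-regular by \eqref{eq:stability-convex-combination} and whose initial law $\bar{\eta}_0 = \tfrac{1}{2}\tilde{\eta}_0 + \tfrac{1}{2}u\mm$ is equivalent to $\mm$. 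Now disintegrate $\bar{\eeta}$ with respect to $e_0$: since every $\bar{\mu} \in \cR_0$ satisfies $\bar{\mu} \ll \mm \sim \bar{\eta}_0$, condition \eqref{eq:stability-convex-conditioning} applies directly and shows that $(\bar{\eeta}(x))_x$ is a $\cR$-MF. Uniqueness in (ii) gives $\bar{\eeta}(x) = \eeta(x)$ for $\mm$-a.e.\ $x$, and reintegrating against $\bar{\eta}_0$ yields $\tilde{\eeta} = \int \eeta(x)\, d\tilde{\eta}_0(x)$. The key point you missed is that blending with a measure equivalent to $\mm$ makes \eqref{eq:stability-convex-conditioning} applicable to all of $\cR_0$ in one stroke, so no ``sharpening'' of weights is required.
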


\begin{proof}
\emph{i) $\Rightarrow$ ii)}. We have only to settle existence of some $\cR$-MF, as uniqueness is trivial.  For any probability $\bar{\nu} = {u}\mm \in \cR_0$ we consider the unique $\cR$-regular solution of the MP $\eeta^{{u}}$ with initial law $\bar{\nu}$ and a regular conditional probability with respect to $e_0$, $(\eeta^{{u} }(x))_{x \in \R^d}$. Then, for any ${v}\mm \in \cR_0$, it holds
\begin{equation} \label{eq:identity-mf-wellposedness} \eeta^{{u}}(x) = \eeta^{{v}}(x), \quad \text{$\mm$-a.e.\ $x \in X$ such that ${u} (x) >0$ and ${v} (x) >0$.}\end{equation}
Indeed, it is sufficient to show that, for every $\veps >0$ and every $\rho \mm$ probability density, concentrated on $\cur{ {u} > \veps, {v}  > \veps }$, with $\rho$ uniformly bounded, it holds
\[ \int \eeta^{{u}}(s)  \rho(x) d\mm (x) = \int \eeta^{{v}}(x)  \rho(x) d\mm (x).\]
This, in turn, follows from uniqueness and \eqref{eq:stability-convex-conditioning-bounded}: both members above are $\cR$-regular solutions to the MP, with initial law $\rho \mm \le  \veps^{-1} C  {u} \mm$. 

Next, we notice that there must exists some ${u} \mm \in \cR_0$ equivalent to $\mm$, i.e., such that ${u} >0$ $\mm$-a.e.\ in $\R^d$, since $\mm$ is equivalent to the supremum of all the measures in $\cR_0$ (appropriately rescaled). Then, we define $\eeta(x) := \eeta^{ u }(x)$, for $x \in \R^d$. To conclude that $\eeta(x)$ is a $\cR$-MF, we use \eqref{eq:identity-mf-wellposedness}: given any probability ${v} \mm \in \cR_0$, it holds
\[ \int \eeta(x)  v (x) d\mm = \int \eeta^{{v}}(x)  v (x) d\mm = \eeta^{ v}.\]

To prove \emph{ ii) $\Rightarrow$ i)}, existence of $\cR$-regular solutions to the MP, given the existence of a $\cR$-MF is trivial, so we focus on uniqueness. We let $\tilde{\eeta}$,  be a $\cR$-regular solution of the MP with some initial law and show that it must coincide with the one induced by the (unique) $\cR$-MF $(\eeta(x))_{x \in \R^d}$, i.e., $\tilde{\eeta} = \int\eeta(x) d\tilde{\eta}_0(x)$.  To this aim, we let ${u} \mm \in \cR_0$ be a probability measure equivalent to $\mm$, and consider the measure
\[ \overline{\eeta} := \frac 1 2 \tilde{\eeta} + \frac 1 2 \int \eeta(x) {u} (x)d\mm(x),\]
which is a $\cR$-regular solution to the $MP$ by \eqref{eq:stability-convex-combination}, whose initial law is again equivalent to $\mm$. By disintegration with respect to $e_{0}$, we obtain a Borel family of probability measures $(\overline{\eeta}(x))_{x \in  \R^d}$, which, by \eqref{eq:stability-convex-conditioning}, provides a $\cR$-MF and so by uniqueness it coincides with $\eeta(x)$, for $\mm$-a.e.\ $x \in \R^d$, yielding
\[   \int \eeta(x) \sqa{ \frac 1 2d\tilde{\eta}_0(x) + \frac 1 2 {u} (x) d\mm(x) }=  \frac 1 2 \tilde{\eeta} + \frac 1 2 \int \eeta(x) {u} (x)d\mm(x)\]
from which we conclude.
\end{proof}

We end this section with some remarks on standard consequences of uniqueness: the Markov property and stability with respect to approximation.

As in Remark \ref{rem:markov}, we consider $\cR$-regular flows with respect to a family $(\cR_{[s,T]})_{ s \in [0,T]}$ such that \eqref{eq:stability-cr-restriction} holds.

\begin{proposition}[Markov property]
Assume that uniqueness holds for $\cR$-regular MP's, in the sense that, for every $s \in [0,T]$, $\bar{\nu} \in \cR_s$, there exists a unique $\cR$-regular solution to the MP on $[s,T]$, with initial law $\bar{\nu}$. 
Then, for every $\cR$-MF $(\eeta(s,x))_{s\in [0,T], x\in \R^d}$, \eqref{eq:chap-kolmogorov} holds true, for every $r$, $s$, $t \in [0,T]$, with $r \le s \le t$.
\end{proposition}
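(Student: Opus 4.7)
The plan is to reduce the Chapman--Kolmogorov identity \eqref{eq:chap-kolmogorov} to the assumed uniqueness by producing two $\cR$-regular solutions of the MP on $[s,T]$ sharing the same initial law, for each admissible averaging measure $\bar{\nu} \in \cR_r$.

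First, set $\overline{\eeta} := \int \eeta(r,x)\, d\bar{\nu}(x)$. By the $\cR$-MF property this is a $\cR$-regular solution of the MP on $[r,T]$ with initial law $\bar{\nu}$. Applying Proposition \ref{prop:restriction} with $\rho \equiv 1$, together with the stability property \eqref{eq:stability-cr-restriction} applied to the curve of $1$-marginals of $\overline{\eeta}$, we obtain that $\overline{\eeta}|_{[s,T]}$ is a $\cR$-regular solution of the MP on $[s,T]$, with initial law $\overline{\eta}_s = \int \eta(r,x)_s\, d\bar{\nu}(x) \in \cR_s$. On the other hand, since $\overline{\eta}_s \in \cR_s$, applying the $\cR$-MF property directly at the intermediate time $s$ yields a second $\cR$-regular solution of the MP on $[s,T]$, namely $\int \eeta(s,y)\, d\overline{\eta}_s(y)$, again with initial law $\overline{\eta}_s$. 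By the assumed uniqueness, the $1$-marginals at time $t$ of these two solutions must coincide, which after unraveling the definition of $\overline{\eta}_s$ reads
\begin{equation}\label{eq:plan-average}
\int \eta(r,x)_t\, d\bar{\nu}(x) = \int \sqa{ \int \eta(s,y)_t\, d\eta(r,x)_s(y) }\, d\bar{\nu}(x).
\end{equation}

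To upgrade \eqref{eq:plan-average} to the pointwise identity \eqref{eq:chap-kolmogorov} for $\bar{\nu}$-a.e.\ $x$, I would repeat the argument with $\bar{\nu}$ replaced by any conditional restriction $\bar{\nu}|_A/\bar{\nu}(A)$ to a Borel $A \subseteq \R^d$ with $\bar{\nu}(A)>0$. Such a restriction is dominated by $\bar{\nu}(A)^{-1}\bar{\nu}$, so by the stability property \eqref{eq:stability-cr-ambrosio} (equivalently \eqref{eq:stability-convex-conditioning-bounded}) it still lies in $\cR_r$ and \eqref{eq:plan-average} applies to it as well. Testing the resulting family of equalities against the indicators of a countable generating algebra of the Borel $\sigma$-algebra of $\R^d$ yields \eqref{eq:chap-kolmogorov}.

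The main obstacle is precisely this final measure-theoretic upgrade, which requires a stability of $\cR$ under bounded conditioning of the initial datum, going beyond the plain uniqueness assumption. This closure property is available in both settings of interest (the general one with \eqref{eq:stability-convex-conditioning-strong} and the DiPerna--Lions one with the domination \eqref{eq:domination}). An alternative route would be to disintegrate $\overline{\eeta}$ with respect to $e_r$ via Proposition \ref{prop:disintegration-mp} and identify $\eeta(r,x)$ with the conditional probability for $\bar{\nu}$-a.e.\ $x$ through uniqueness, but some closure of $\cR$ is still needed to carry out the comparison on $[s,T]$ pointwise in $x$.
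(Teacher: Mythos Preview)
Your argument is correct and matches the paper's: construct two $\cR$-regular solutions of the MP on $[s,T]$ sharing the initial law $\int \eta(r,x)_s\, d\bar{\nu}(x)$ (one by restricting $\int \eeta(r,x)\, d\bar{\nu}(x)$ via Proposition~\ref{prop:restriction} and \eqref{eq:stability-cr-restriction}, the other from the flow at time $s$), then invoke uniqueness. The paper states only the resulting path-measure identity and declares the pointwise conclusion ``straightforward''; your explicit treatment of the upgrade from the averaged equality to the $\bar{\nu}$-a.e.\ statement, and your observation that it relies on a stability hypothesis such as \eqref{eq:stability-cr-ambrosio}, is in fact more careful than the paper's sketch.
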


The proof is straightforward from the following identity between measures on $C([s,T];\R^d)$:
\[ ( |_{[s,T]} )_\sharp \sqa{ \int \eeta(r,x) \bar{\nu}(dx)} = \int  \eeta(s,y) \sqa{\int \eta(r,x)_s \bar{\nu}(dx)}(dy),\]
which, in turn, holds true because both terms define $\cR$-regular solutions of the MP on $[s,T]$, with initial law $\int \eta(r,x)_s \bar{\nu}(dx)$: this is obvious for the right hand side, while for the left hand side it is a consequence Proposition \ref{prop:restriction} and condition \eqref{eq:stability-cr-restriction}.


Another well understood, but rather technical, property that sometimes follows from existence and uniqueness is a non-quantitative version of stability with respect to approximations, which in this setting would read as follows.

\begin{proposition}[stability] For $n \ge 1$, let $a^n$, $b^n$ be Borel maps as in \eqref{eq:a-b-rn}, let $\cL^n := \cL(a^n, b^n)$ and let $\eeta^n$ solve the MP associated to $\cL^n$. If
\begin{enumerate}[i)]
\item there exists a unique $\cR$-regular solution $\eeta$ of the MP associated to $\cL=\cL(a,b)$ with $\eta_0 = \bar{\nu}$,
\item it holds $\eta_0^n \to \bar{\nu}$ narrowly, $a^n \to a$ and $b^n \to b$ pointwise as $n \to \infty$,
\item for some convex, l.s.c\ functions $\Theta_1$, $\Theta_2$ as in Theorem \ref{theorem:basic-burkholder} it holds
\begin{equation*}
\limsup_{n \to \infty} \int_0^T \int  \Theta_1\bra{\abs{b^n_t} }+ \Theta_2\bra{\abs{a^n_t} } d\eta_t^ndt \le \int_0^T \int  \Theta_1\bra{\abs{b^n_t} }+ \Theta_2\bra{\abs{a^n_t} } d\eta_tdt,\end{equation*}
\item every limit point in $C([0,T]; \scrP(\R^d)) $ of  $(\eta^n)_{n\ge 1}$ belongs to $\cR$,
\end{enumerate}
then $\eeta^n \to \eeta$ narrowly in $\scrP ( C([0,T]; \R^d) )$.
\end{proposition}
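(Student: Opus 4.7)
The plan is a standard compactness-and-uniqueness argument, proceeding in four steps: tightness of $(\eeta^n)$ in $\scrP(C([0,T];\R^d))$, identification of subsequential limits as $\cR$-regular solutions of the MP for $\cL$, invocation of uniqueness \emph{i)}, and the usual ``all subsequential limits agree so the full sequence converges'' conclusion.

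\medskip

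\textbf{Step 1: Tightness.} I would first show tightness of $(\eta_0^n)_{n\ge 1}$ in $\scrP(\R^d)$, which is immediate from narrow convergence in \emph{ii)}. Then, to promote this to tightness in $\scrP(C([0,T];\R^d))$, I apply the modulus-of-continuity estimate of Theorem~\ref{theorem:basic-burkholder}: the hypothesis \emph{iii)} provides a uniform bound on $\int_0^T\!\int\Theta_1(|b^n_t|)+\Theta_2(|a^n_t|)\,d\eta_t^n dt$, and the superlinearity of $\Theta_1,\Theta_2$ together with the refined L\'evy-type estimate underlying Theorem~\ref{theorem:basic-burkholder} yields an Aldous-type tightness condition for the canonical process under $\eeta^n$. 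Combined with tightness of the initial marginals, this gives tightness of $(\eeta^n)$.

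\medskip

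\textbf{Step 2: Passage to the limit in the martingale problem.} Let $\eeta^*$ be any narrow limit point, $\eeta^{n_k}\to\eeta^*$. For any $f\in\Algebra_c$, I need to show that
\[
M^f_t := f_t\circ e_t - \int_0^t [\partial_s f + \cL_s f]\circ e_s\,ds
\]
is an $\eeta^*$-martingale with respect to $\cF$. For each $n_k$, the analogous process with $\cL^{n_k}$ is a martingale under $\eeta^{n_k}$, so it suffices to pass to the limit in expressions of the form $\int F\cdot(M_t^{f,n_k}-M_s^{f,n_k})\,d\eeta^{n_k}$ for $F$ a bounded continuous $\cF_s$-measurable cylindrical test functional. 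The pointwise convergence $a^n\to a$, $b^n\to b$ in \emph{ii)} together with compact support of $f$ handles convergence of the integrand $[\partial f + \cL^{n_k}f]\circ e_s$ pointwise; the main obstacle here, which I expect to be the delicate point, is justifying the exchange of limit and integral, since $|\cL^{n_k}f|$ is controlled by $C(|a^{n_k}|+|b^{n_k}|)$ on $\mathrm{supp}(f)$ and these are not uniformly bounded. This is precisely where \emph{iii)} enters through a de la Vall\'ee-Poussin/Vitali argument: the uniform bound on $\Theta_i$-moments against $\eta^{n_k}_t$ gives uniform integrability of $(|a^{n_k}_t|+|b^{n_k}_t|)$ along the sequence, and the narrow convergence of the marginals $\eta^{n_k}_t\to\eta^*_t$ (which follows from narrow convergence on $C([0,T];\R^d)$) together with lower semicontinuity and \emph{iii)} ensures we do not lose mass in the limit. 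This upgrades pointwise convergence to convergence in $L^1(\eeta^{n_k})$ uniformly in $k$, and the martingale relation passes to $\eeta^*$.

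\medskip

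\textbf{Step 3: Identification and uniqueness.} By construction the initial marginal of $\eeta^*$ equals $\bar{\nu}$, from \emph{ii)} and narrow continuity. The curve of 1-marginals $(\eta^*_t)_{t\in[0,T]}$ is a narrow limit point of $(\eta^{n_k})$ in $C([0,T];\scrP(\R^d))$, so assumption \emph{iv)} places it in $\cR$; hence $\eeta^*$ is an $\cR$-regular solution of the MP for $\cL$ with initial datum $\bar{\nu}$. By the uniqueness hypothesis \emph{i)} we conclude $\eeta^*=\eeta$.

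\medskip

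\textbf{Step 4: Conclusion.} Since $(\eeta^n)$ is tight and every narrow limit point equals $\eeta$, the whole sequence converges narrowly to $\eeta$ in $\scrP(C([0,T];\R^d))$. The only genuinely technical point is the uniform integrability argument in Step~2 needed to pass to the limit in the martingale relation under only pointwise convergence of possibly unbounded coefficients; everything else is essentially formal.
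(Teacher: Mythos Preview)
Your plan is correct and matches the paper's own treatment: the paper does not give a detailed proof either, but simply remarks that one combines the tightness estimates from Section~\ref{sec:tightness} (your Step~1, via Theorem~\ref{theorem:basic-burkholder} and assumption~\emph{iii)}) with \cite[Lemma~23]{ambrosio-crippa-edinburgh} to show that any narrow limit point is a $\cR$-regular solution of the MP for $\cL$ (your Steps~2--3), after which uniqueness from~\emph{i)} gives the conclusion (your Step~4). Your unpacking of the limit passage in Step~2 via a de~la~Vall\'ee~Poussin/Vitali-type uniform integrability argument is precisely the content of the cited lemma; the only point to keep in mind is that since $a,b$ are merely Borel, convergence of $\int\cL^{n_k}f\,d\eta^{n_k}_t$ to $\int\cL f\,d\eta^*_t$ with both integrand and measure varying requires exactly such a lemma rather than elementary narrow-convergence arguments.
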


Notice that we do not require that $\eeta^n$ are $\cR$-regular: in general it does not even make sense, since $\cR$ is a class of solutions to the FPE associated to $\cL$, not to $\cL^n$. A proof of the result above would not be difficult, but it would require us to combine some technical results, such as those established in Section \ref{sec:tightness} and \cite[Lemma 23]{ambrosio-crippa-edinburgh} to establish that $(\eeta^n)$ is a tight sequence and any limit point provides a $\cR$-regular solution to the MP associated to $\cL$; the conclusion is then straightforward from uniqueness. If \eqref{eq:domination} holds, then $\mm$-a.e.\ convergence in place of pointwise convergence of the coefficients is sufficient,  if we also restrict to solutions $\eeta^n$ whose marginals are absolutely continuous (as done, e.g.\ in \cite[Theorem 3.7]{figalli-sdes}).

\section{Well-posedness results}\label{sec:well-posedness-diperna-lions-figalli}

In this section, we state and prove two results (Theorem \ref{thm:wp-degenerate} and Theorem \ref{thm:wp-elliptic}) about existence and uniqueness for solutions of the FPE \eqref{eq:fpe-rn}, belonging to suitable classes of probability measures. In particular, as we are interested in the DiPerna-Lions theory, we deal with absolutely continuous with respect to the $d$-dimensional Lebesgue measure, $\mu_t = u_t \mathscr{L}^d$, satisfying some bounds on their density $u: [0,T] \times \R^d \to \R$. Besides such integrability conditions on the solution, we require (Sobolev) regularity assumptions on the coefficients $a$, $b$.

In Section \ref{sec:formal}, we give some formal derivation of the energy estimates  which eventually lead to well-posedness for FPE's; in Section \ref{sec:sobolev}, we introduce the notation for Sobolev spaces and related basic facts; in Section \ref{sec:statements} we state our results and compare their with some (related) existing literature; the technical heart of the matter is developed in Section \ref{sec:commutators}, where crucial commutator inequalities are proved; in Section \ref{sec:proof-well-posedness} we give a proof of main our results.

\subsection{Energy estimates and renormalized solutions}\label{sec:formal}

As in the classical DiPerna-Lions theory (as well as in \cite{figalli-sdes}), we rely on energy inequalities satisfied by an absolutely continuous solution $u = (u_t)_{t\in [0,T]}$ of \eqref{eq:fpe-rn} (i.e.\  $\mu_t = u_t \scrL^d$). Let us briefly sketch a formal derivation, where we assume all the quantities involved being smooth (solutions and coefficients). 
%



The main idea is to write the equation satisfied by $t\mapsto \int \beta(u_t(x)) dx$, where 
$\beta: \R \mapsto \R$ is a smooth function (from a Lagrangian viewpoint, this amounts in choosing, as a test function, an expression involving the density of the solution $u$ itself). The chain rule gives $\partial_t \beta(u) = \beta'(u) \cL^*(u)$ and, 
 by linearity, we consider separately the drift and diffusion terms. Straightforward calculus gives
\[ \beta'(u) (b \cdot \nabla)^* u = \beta'(u) \nabla^* (b u) = \beta'(u) u \nabla ^* b + \beta'(u) b \cdot \nabla u =   (b \cdot \nabla)^* \beta(u) + \sqa{ \beta'(u)u -\beta(u) } \nabla^* b.\]
For the diffusion part, we first notice the identity $(a : \nabla^2 )^* u = -\nabla^* ( a \cdot \nabla u  ) + \nabla^* ( ( \nabla ^*a ) u  )$, where $\nabla^*a$ is the vector field $(\nabla^*a)_i = - \sum_{j=1}^d \partial_j a_{i,j}$, for $i \in \cur{1, \ldots , d}$. Therefore, we obtain
\begin{equation*}
\begin{split}
 \beta'(u) & ( a : \nabla^2 )^* u =  \\
=& -\beta'(u)\nabla^* ( a \cdot \nabla u  ) + \beta'(u)\nabla^* ( ( \nabla ^*a ) u  ) \\
=&  -\beta'(u)\nabla^* ( a \cdot \nabla u  ) + \nabla^* ( ( \nabla ^*a ) \beta(u)) +  \sqa{ \beta'(u)u -\beta(u) }  (\nabla^*)^2a \\
=&  -\nabla^* ( a \cdot \nabla \beta(u ) ) - \beta''(u)a (\nabla u, \nabla u) + \nabla^* ( ( \nabla ^*a ) \beta(u)) +  \sqa{ \beta'(u)u -\beta(u) }  (\nabla^*)^2a \\
=&  (a : \nabla^2 )^* \beta(u) - \beta''(u) a (\nabla u, \nabla u) + \sqa{ \beta'(u)u -\beta(u) } (\nabla^*)^2 a
\end{split}\end{equation*}
where $(\nabla^*)^2 a = \nabla^*( \nabla^*a ) = \sum_{i,j} \partial_{i,j}^2 a_{i,j}$. Summing up, we have the identity
\begin{equation}\label{eq:renormalized-0} \partial_t \beta(u) =  \cL^* (\beta(u) ) - \frac{\beta''(u)}{2} a (\nabla u, \nabla u) - \sqa{ \beta'(u)u -\beta(u) } \bra{ \div \cL }, \end{equation}
where $\div \cL := - \nabla^*b - (\nabla^*)^2 a/2$. By integrating over $\R^d$,  (formally $\cL 1 = 0$), we deduce
\[ \partial_t \int \beta(u) d \scrL^d = - \int \frac{\beta''(u)}{2} a( \nabla u, \nabla u) d\scrL^d - \int \sqa{ \beta'(u)u -\beta(u) } \bra{ \div \cL } d \scrL^d.\]
If $\beta$ is convex with $\beta(0) =0$, so $\beta''(z) \ge 0$ and $\beta'(z)z -\beta(z)\ge 0$,  for   $z \in \R$, we obtain
\begin{equation}\label{eq:energy-gronwall} \partial_t \int \beta(u) d \scrL^d \le  \int \sqa{ \beta'(u)u -\beta(u) } \bra{\div \cL}^- d \scrL^d, \end{equation}
which is the key inequality we employ to show existence as well as uniqueness, for suitable choices of $\beta$. For example, letting $\beta(z) = |z|^+$, we deduce that if $u_0 \ge 0$, then $u_t \ge 0$ for $t \in [0,T]$ (thus, for simplicity, we assume that $u_t \ge 0$ in what follows). In particular, to deduce uniqueness for solutions in $L^\infty_t(L^r_x)$ (for some $r >1$), we show that the difference between any solutions $u$, $v$ satisfies \eqref{eq:energy-gronwall}, with $\beta(z) = \abs{z}^r$, and Gronwall lemma entails a uniform bound with respect to $t \in (0,T)$ for $\nor{ u_t - v_t}_{L^r_x}$. Let us also notice that, with the choice $\beta(z) = \abs{z}^2$, keeping track of the non-negative terms dropped above, we would obtain a bound for the ``Sobolev energy'' $\int_{\R^d}  a_t(\nabla u_t, \nabla u_t) d\scrL^{d} dt$ and, for $\beta(z) = |z|^{r}$, with $r >2$, of the energy 
\[ r(r-1) \int_0^T \int_{\R^d}  u^{r-2}_t a_t(\nabla u_t, \nabla u_t) d\scrL^{d} dt =  \frac{r(r-1)}{(r/2-1)^2 }\int_0^T \int_{\R^d} a_t(\nabla u_t^{r/2}, \nabla u_t^{r/2}) d\scrL^{d} dt\]

In the elliptic case, i.e.\ if there exists some constant $\lambda >0$ with $a(v,v) \ge \lambda  \abs{v}^2$ for every $v \in \R^d$, uniformly in $(0,T)\times \R^d$, we  would deduce that any weak solution $u$ actually belongs to the Sobolev space $L^2_t(W^{1,2}_x)$. Moreover, if we have no bounds on $\div \cL$ but only on $(\nabla^*)^2a $, and $b \in L^1_t(L^\infty_x)$, we may still deduce some bound with respect to the energy $z \mapsto |z|^{2}$,
\[  2 \int u_t b_t\cdot \nabla u_t d\scrL^d \le \frac{\lambda}{2} \int |\nabla u_t|^2 d\scrL^d + \frac{4 \nor{b_t}_\infty}{\lambda} \int |u_t|^2 d\scrL^d,\]
so that
\[ \partial_t \int |u_t|^2 d\scrL^d \le  \int |u_t|^2 \sqa{ \bra{(\nabla^*)^2 a_t}^+ + \frac{ 4 \nor{b_t}_\infty } {\lambda} }d \scrL^d - \frac{\lambda}{2} \int |\nabla u_t|^2 d\scrL^d,\]
and again Grownwall inequality leads to a bound for $L^2_x$, uniform in $t \in [0,T]$. Similarly, if $r > 2$, we use the inequality $2 ab \le a^2 + b^2$ thus, for every $\veps >0$,  the term $r \int u_t^{r-1} b_t\cdot \nabla u_t d\scrL^d$ (assume for simplicity that $u$ is non-negative) is estimated with
\[  \frac{r}{r/2 -1} \int u_t^{r/2} b_t\cdot \nabla u_t^{r/2} d\scrL^d \le \frac{\veps}{2} \int |\nabla u_t^{r/2}|^2 d\scrL^d + \frac{r^2 \nor{b_t}_\infty}{2(r/2-1)^2 \veps} \int |u_t|^r d\scrL^d,\]
and letting $\veps = \lambda$, we may conclude again by a Grownall argument that
\begin{equation}\label{eq:gronwall-elliptic-r} \sup_{t \in [0,T]} \nor{u_t}_{L^r_x} \le \nor{u_0}_{L^r_x} \exp\cur{ \bra{1 - \frac 1 r }\nor{ ( (\nabla^*)^2 a )^+ }_{L^1_tL^\infty_x}  + \frac{r}{2(r/2-1)^2 \lambda} \nor{b }_{L^1_tL^\infty_x} }\end{equation}

Let us finally remark that if we integrate \eqref{eq:renormalized-0} with respect to some function $f \in \cA$, with $f \ge 0$ (instead of $f =1$), we would deduce 
\begin{equation} \label{eq:renormalized-1}  \partial_t \int f \beta(u) d \scrL^d  \le  \int ( \partial_t + \cL f )   \beta(u)  d\scrL^d + \int f \sqa{ \beta'(u)u -\beta(u) }  \bra{\div \cL}^-  d\scrL^d.\end{equation}

The inequality above is so useful that weak solutions $u$ of the FPE, which also satisfy \eqref{eq:renormalized-1}  for every $f \in \cA$, $f \ge 0$, for (many) smooth convex functions $\beta$, are called \emph{renormalized solutions} \cite[Definition 4.9]{figalli-sdes}. There are abstract results connecting well-posedness for FPE's and the fact that every weak solution is renormalized, e.g.\ \cite[Lemma 4.10]{figalli-sdes} (but see also \cite{bouchut-crippa-06} for a somewhat converse result, in the deterministic framework); here, for brevity, we limit ourselves to a direct proof of uniqueness of FPE's from the validity of \eqref{eq:renormalized-0}, e.g.\ with the special choice $\beta(z) = |z|^r$.

\subsection{Sobolev spaces}\label{sec:sobolev}

Before we state and prove our main results, we briefly introduce Sobolev spaces associated to the operators $\partial_t$ and $\cL$, together with some useful facts; we use throughout a compact notation extending that in Section \ref{sec:definitions-superposition}.

For $p$, $q \in [1,\infty]$, the space $W^{1,p}_t(L^q_x)$ is defined as the space of functions $u \in L^p_t(L^q_x)$ such that the distributional derivative $\partial_t u$ is represented by a (unique) $g \in L^p_t(L^q_x)$, 
\[ \int_{0}^T \int _{\R^d} (\partial_t f)  u_t d\scrL^d dt =  - \int_0^T \int_{\R^d} f_t g_t   d\scrL^d dt, \quad \text{for every $f \in \cA_c$.}\]
We endow $W^{1,p}_t(L^q_x)$ with the Banach norm $\nor{u}_{L^p_{t}L^q_x }+ \nor{\partial_t u }_{ L^p_{t}L^q_x}$. A standard mollification argument, with respect to the variable $t \in (0,1)$, gives that $\cA$ is dense in  $W^{1,p}_t(L^q_x)$, for $p$, $q < \infty$ (for a proof of this and the following results, we refer e.g.\ to \cite[\S III.1]{showalter}). In particular, the chain rule for $\partial_t$ extends to $W^{1,p}_t(L^q_x)$, thus
\[ \partial_t (f \beta(u) ) = (\partial_t f) \beta(u) +  f \beta'(u) \partial_t u, \quad \text{ for every $f \in \cA$, $u \in W^{1,p}_t(L^q_x)$, $\beta \in C^1_b(\R)$.}\]
Another straightforward consequence of the density of $\cA$ is the fact that any $u \in W^{1,p}_t(L^q_x)$ enjoys an absolutely continuous representative, i.e.\ there exists some $\tilde{u} \in AC^p( [0,T]; L^q(\scrL^d) )$ such that $\tilde{u}_t = u_t$, for $\scrL^1$-a.e.\ $t \in (0,T)$. In particular the map: $T_0(u) := \tilde{u}_0$ (trace at $0$) is linear and continuous from $W^{1,p}_t(L^q_x)$ to $L^q_x$. Moreover, $t \mapsto \tilde{u}_t$ is strongly differentiable at $\scrL^1$-a.e.\ $t \in (0,T)$ and it holds $\frac{d}{dt} \tilde{u} = \partial_t u$.

We associate to the diffusion operator $\cL$ some ``Sobolev spaces''. An important role in our deductions is played by $D^p(\cL)$ (for $p \in [1,\infty)$), defined as the abstract completion of $\cA$ with respect to the norm $\nor{ f }_{D^p(\cL)}:= \nor{f}_{L^1_tL^p_{x} } + \nor{\cL f}_{L^1_tL^p_{x} }$, which is well defined whenever $a$, $b \in L^1_tL^p_{x}$ (actually, a more consistent notation for $D^p(\cL)$ would be $L^1_t(D^p(\cL_t))$). Let us remark however that, without further regularity assumptions,  the extended operator $D^{p}(\cL) \ni f \mapsto \cL f \in L^{1}_t (L^p_x)$ may be multi-valued, but the assumptions on $\cL$ that we impose in our results entail that the extension is single-valued.

A useful fact is the following: if $f \in W^{1,1}_t(L^p_x)  \cap D^{p}(\cL)$, then one can provide a sequence $(f_n)_{n\ge 1} \subseteq \cA$ converging towards $f$ both in  $W^{1,1}_t(L^p_x)$ and $D^{p}(\cL)$. Indeed, it is sufficient to consider first a sequence $(g_n)_{n\ge 1} \subseteq  \cA$  converging towards $f$ in $D^{p}(\cL)$, let $\rho$ be a smooth probability density on $\R$, and consider the approximation $g_{n,m} := g_n * \rho_m$ (where we let $\rho_m(t) = m^{-1} \rho(t/m)$, $t \in \R$, and we carefully extend $g_n$ to a continuous function outside the set $[0,T]\times \R^d$). For $(n,m) \to \infty$, the sequence $g_{n,m}$ converges towards $f$ in $D^{p}(\cL)$, because $g \mapsto g * \rho_m$ is a contraction in $D^{p}(\cL)$, as convolution with respect to $t$ and the operator $\cL$ commute; for fixed $m\ge 1$, the sequence $g_{n,m}$ converges towards $f * \rho_m$, because $g \mapsto g * \rho_m$ is continuous from $L^1_t(L^p_x)$ into $W^{1,1}_t(L^p_x)$, with norm smaller than $\nor{\rho_m}_\infty$. Moreover, as $m \to \infty$, $f * \rho_m$ converges towards $f$ in $W^{1,1}_t(L^p_x)$, since $f \in W^{1,1}_t(L^p_x)$ (this is exactly the standard mollification argument providing density of $\cA$ in $W^{1,1}_t(L^p_x)$). By a diagonal argument, we finally extract a sequence $(f_n)_{n\ge 1}$ as required. 
As a consequence, if $u \in L^\infty_tL^r_x$ ($r>1$) is a narrowly continuous solution of \eqref{eq:fpe-rn}, with $a$, $b \in L^1_tL^p_x$, then the weak formulation \eqref{eq:weak-fpe-rn} extends to $f \in W^{1,1}_t(L^{r'}_x) \cap D^{r'}(\cL)$:
\begin{equation}\label{eq:weak-fpe-rd-hp} \int_0^T \int  \sqa{ (\partial_t  + \cL_t) f } u_t d\scrL^d dt = \int f_T u_T d\scrL^d - \int f_0 u_0 d\scrL^d,\end{equation}
where by $f_T \in L^{r'}_x$ and $f_0\in L^{r'}_x$ we mean the continuous representative of $f$ evaluated at $T$ and $0$.

Similarly, we introduce the space $D^{p}(\cL, a \nabla \otimes \nabla )$ as the abstract completion of $\cA$ with respect to the norm $\nor{ \abs{f} + \abs{\cL f } + a(\nabla f, \nabla f) }_{L^1_tL^p_{x} }$. Clearly, this is a space than $D^{p}(\cL)$, but is useful because the following chain rule holds, for $D^{p}(\cL, a \nabla \otimes \nabla )$, and $\gamma \in C^2(\R)$, with $\gamma'$ and $\gamma''$ uniformly bounded:
\[ \cL ( \gamma( u ) ) =
\gamma'(u) \cL(u) + \gamma''(u) a(\nabla u ,\nabla u).\]
As in the previous case, it might be that $f \mapsto \cL(f)$ and $f \mapsto a(\nabla f, \nabla f)$ are not single-valued, but the identity above holds true with the natural interpretation (and in our results we introduce assumptions ensuring that these are well-defined functions).

We always consider the divergence $\div \cL$ be defined in the sense of distributions, i.e., as the linear operator $\cA_c \ni f \mapsto \int_0^T \int \cL_t f d\scrL^{d} dt$ (provided that $a$, $b$ are locally integrable). We say that $\div \cL \in L^1_{t}(L^p_x)$ if there exists a (necessarily unique) $g \in L^1_{t}(L^p_x)$ such that
\[
\int_{(0,T)\times\R^d} \cL f d\scrL^{1+d} =  - \int_{(0,T)\times\R^d} f g d\scrL^{1+d}, \quad \text{for every $f \in \cA_c$.}\]
Similarly, $\div \cL^{-} \in L^1_{t}(L^p_x)$ if, for some $g \in L^1_{t}(L^p_x)$, the inequality $\le$ in place of equality above holds, for every $f \in \cA$, with $f \ge 0$. If $\div\cL^- \in L^1_{t}(L^p_x)$, then we can prove the following inequality, for $u \in D^{1,p}(\cL, a \nabla \otimes \nabla)$, and $\beta \in C^3(\R)$ convex, with bounded derivatives as well as $\beta'(z)z - \beta(z)$ bounded:
\begin{equation}
\label{eq:ibp-diffusion} \int_{(0,T)\times \R^d}  \cL ( \beta'(u) ) u d\scrL^{1+d} \le \int_{(0,T)\times \R^d} \sqa{ \beta'(u)u - \beta(u)} \div \cL^- d \scrL^{1+d}.\end{equation}
Indeed, let $\rho$ be a smooth convolution kernel on $\R^d$ and consider the diffusion operator $\cL^m$ with smooth coefficients $a* \rho^m$ and $b* \rho^m$ (where we let $\rho^m(x) = m^{-d} \rho(x/m)$). If we also assume $u \in \cA$, then the inequality above holds true by the derivation as in Section \ref{sec:formal} above. The general case follows by approximation, letting first $m \to \infty$ and then choosing $u^n \in \cA$ converging towards $u$ in  $u \in D^{p}(\cL, a \nabla \otimes \nabla)$.

Besides these spaces associated with $\cL$, let us recall some features of standard Sobolev spaces and the smoothing properties of the standard heat semigroup $(P^\alpha)_{\alpha \ge 0}$ on $\R^d$. For $p \in [1,\infty]$, we consider spaces
\[ W^{1,p}_x := \cur{ f \in L^{p}_x \, : \, \nabla f \in  L^p_x }, \quad  W^{2,p}_x = \cur{ f \in  W^{1,p}_x  \,  : \,  \nabla^2 f \in  L^p_x },\]
endowed with the usual norms.


A crucial fact for our deductions are quantitative inequalities for the smoothing effect of the heat semigroup $(\sfP^\alpha)_{\alpha\ge 0}$, which can be deduced by straightforward computations from the heat kernel in $\R^d$. Of course, $\sfP^\alpha$ is a contraction semigroup in $W^{1,p}_x$ as well as $W^{2,p}_x$; moreover, integration by parts and H\"older inequality give
\begin{equation}\label{eq:smoothing-heat-lpgamma} \sqrt{\alpha} \nor{ \nabla \sfP^\alpha f}_{L^p_x} \le c \nor{ f }_{L^p_x} \quad \text{for every $\alpha \in (0,\pinfty)$,} \end{equation}
with $c$ depending on $p \in [1,\infty]$ only (possibly also on the dimension $d$). Such inequalities, called $L^p-\Gamma$ in \cite{ambrosio-trevisan}, play a fundamental role for our approach to continuity equations in metric measure spaces: their validity in abstract setups as well as in Riemannian manifolds follow e.g.\ from uniform lower bounds on the Ricci curvature.

Arguing similarly, it holds for $p \in [1,\infty]$, $i, j \in \cur{1, \ldots d}$,
\begin{equation}\label{eq:smoothing-heat-lpdelta} \alpha \nor{ \partial_{i,j}^2 \sfP^\alpha f}_{L^p_x} \le c \nor{ f }_{L^p_x} \quad \text{for every $\alpha \in (0,\infty)$.} \end{equation}

Let us also notice that, as $\alpha \downarrow 0$, the left hand side in the two inequalities above are infinitesimal, for a standard density and uniform boundedness argument applies.

Finally, another property that we occasionally use below is that, for $p \in (1,\infty)$, one has $W^{2,p}_x := \cur{ f \in L^p_x\, :\, \Delta f \in L^p_x }$, because of the $L^p_x$-boundedness of the second order Riesz transform $f \mapsto \nabla^2 \Delta^{-1}f$, see e.g.\ \cite{gilbarg-trudinger}.

\subsection{Well-posedness: statement of results}\label{sec:statements}

We are in a position to state our main well-posedness results, which we split in two theorems: the first one deals with possibly degenerate  diffusions, with Sobolev regular coefficients.

\begin{theorem}[degenerate case]\label{thm:wp-degenerate}
Let $p \in (1,\infty]$, $r \ge  2p/(p-1)$, and $a$, $b$ be as in \eqref{eq:a-b-rn}, with
\[ a \in L^1_t(W^{2,p}_x),\quad  b \in L^1_t(W^{1,p}_x), \quad \text{and} \quad \div \cL^{-} \in L^1_tL^\infty_x.\]
Then, for every probability density $\bar{u} \in L^{r}_x$, there exists a unique narrowly continuous solution $u = (u_t)_{t \in [0,T]}$ of the FPE \eqref{eq:fpe-rn} with $u_0 = \bar{u}$ and $u \in L^\infty_t(L^r_x)$.
\end{theorem}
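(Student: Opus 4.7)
The plan is to follow the scheme sketched in Section \ref{sec:formal}, choosing $\beta(z) = |z|^r$ in the formal energy inequality \eqref{eq:energy-gronwall} to obtain both existence and uniqueness. The heart of the matter is the rigorous justification of the chain rule \eqref{eq:ibp-diffusion} for an arbitrary weak solution $u \in L^\infty_t L^r_x$ — i.e., the proof that every such solution is renormalized — which ultimately rests on the commutator estimate Lemma \ref{lem:commutator-diffusion-rd}.

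\emph{Uniqueness.} By linearity, it suffices to show that a narrowly continuous weak solution $w \in L^\infty_t L^r_x$ of \eqref{eq:fpe-rn} with $w_0 = 0$ must vanish. If one could apply \eqref{eq:ibp-diffusion} with $\beta(z) = |z|^r$ directly to $w$, then Grönwall's lemma applied to $t \mapsto \int |w_t|^r\,d\scrL^d$, using $\div \cL^- \in L^1_t L^\infty_x$, would immediately give $w \equiv 0$. The obstruction is that $w$ is not regular enough to belong to $D^{r/(r-1)}(\cL, a\nabla\otimes\nabla)$. The standard remedy is to regularize: set $w^\alpha_t := \sfP^\alpha w_t$ using the heat semigroup and write
\[ \partial_t w^\alpha_t = \cL^*_t w^\alpha_t + \Commutator^\alpha_t, \qquad \Commutator^\alpha_t := [\sfP^\alpha, \cL^*_t] w_t. \]
Now $w^\alpha$ is spatially smooth, the chain rule applies pointwise, and upon multiplying by $\beta'(w^\alpha_t)$ and integrating as in Section \ref{sec:formal}, one obtains
\[ \frac{d}{dt}\int |w^\alpha_t|^r\,d\scrL^d \le (r-1)\int |w^\alpha_t|^r (\div\cL_t)^- d\scrL^d + r \int |w^\alpha_t|^{r-2}w^\alpha_t \,\Commutator^\alpha_t\,d\scrL^d, \]
plus a remainder coming from the fact that $\beta(\sfP^\alpha w) \neq \sfP^\alpha \beta(w)$, which, together with the commutator term, must be shown to vanish as $\alpha\downarrow 0$.

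\emph{The commutator estimate.} This is the key step. Under the Sobolev hypotheses $a \in L^1_t W^{2,p}_x$ and $b \in L^1_t W^{1,p}_x$, Lemma \ref{lem:commutator-diffusion-rd} is to bound $\Commutator^\alpha$ in a norm that pairs, via Hölder, with $|w^\alpha|^{r-2}w^\alpha \in L^\infty_t L^{r/(r-1)}_x$. The threshold condition $r \ge 2p/(p-1)$ is precisely what allows the exponents to close: one checks $1/p + (r-1)/r + 1/r \le 1$ after invoking the smoothing bounds \eqref{eq:smoothing-heat-lpgamma}–\eqref{eq:smoothing-heat-lpdelta} on $\nabla \sfP^\alpha w$ and $\nabla^2 \sfP^\alpha w$. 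The duality/Bakry-Émery technique advertised in Section \ref{sec:commutators} is what produces a quantitatively vanishing bound: heuristically, writing the commutator through the identity $\sfP^\alpha g - g = \int_0^\alpha \Delta \sfP^\beta g\,d\beta$ leads to a representation involving products of derivatives of $a,b$ against $\nabla \sfP^\beta w$ and $\nabla^2 \sfP^\beta w$, and the $L^p$-$\Gamma$ inequalities together with dominated convergence give $\Commutator^\alpha \to 0$ in the right norm. Letting $\alpha\downarrow 0$, the smooth-inequality for $w^\alpha$ transfers to $w$ and Grönwall concludes the argument.

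\emph{Existence.} Mollify the coefficients in $x$, $a^n := a * \rho^{1/n}$, $b^n := b * \rho^{1/n}$, and the datum $\bar u^n := \bar u * \rho^{1/n}$. Classical parabolic theory for the smoothed operator $\cL^n := \cL(a^n,b^n)$ produces a smooth non-negative solution $u^n$ with $u^n_0 = \bar u^n$, to which \eqref{eq:energy-gronwall} applies rigorously; since $(\div\cL^n)^- \le (\div\cL)^- * \rho^{1/n}$ by convexity, Young's inequality yields the uniform bound
\[ \sup_{t \in [0,T]} \nor{u^n_t}_{L^r_x}^r \le \nor{\bar u}_{L^r_x}^r \exp\bigl( (r-1) \nor{(\div\cL)^-}_{L^1_tL^\infty_x} \bigr). \]
Extract a weak-$*$ limit $u \in L^\infty_t L^r_x$; since $a^n \to a$ and $b^n \to b$ in $L^1_t L^p_x$ and $u^n$ is uniformly bounded in $L^\infty_t L^{p/(p-1)}_x$ (because $r \ge p/(p-1)$), Hölder allows passage to the limit in the weak formulation \eqref{eq:weak-fpe-rn}. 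Narrow continuity follows from Remark \ref{rem:extension-weak-formulation}.

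\emph{Main obstacle.} Everything beyond routine book-keeping is packaged into the commutator estimate and the accompanying convergence analysis as $\alpha\downarrow 0$; once that ingredient is in hand, the renormalization and hence uniqueness follow mechanically from Grönwall. The existence half is comparatively soft, contingent only on the a priori energy bound surviving mollification.
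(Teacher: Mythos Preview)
Your overall architecture (heat-semigroup smoothing, commutator, Gr\"onwall) matches the paper, but the choice $\beta(z)=|z|^r$ is a genuine gap. The diffusion commutator lemma you invoke does \emph{not} bound $[\sfP^\alpha,\cL]$ in an operator norm that vanishes as $\alpha\downarrow 0$: the estimate \eqref{eq:commutator-taylor-second-order-rd} carries a residual term $\alpha\int u[\Delta,a:\nabla^2]\sfP^\alpha f$, and the proof that this residual vanishes (i.e.\ \eqref{eq:commutator-diffusion-infinitesimal-rd}) works \emph{only} for the specific quadratic pairing $\int u[\sfP^\alpha,a:\nabla^2](\sfP^\alpha u)$. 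The reason is structural: in handling $\alpha\int(\partial^2_{i,j}u^\alpha)(\nabla a^{i,j}\cdot\nabla u^\alpha)$ one needs the symmetry $a^{i,j}=a^{j,i}$ together with the fact that the two gradient factors are the \emph{same} function, so that the cross terms recombine into perfect derivatives that can be integrated by parts once more. If you test against $|w^\alpha|^{r-2}w^\alpha$ with $r\neq 2$, this symmetry trick is unavailable and the residual does not visibly go to zero. (Incidentally, your H\"older check ``$1/p+(r-1)/r+1/r\le 1$'' equals $1+1/p>1$, so the exponents do not close as written.)

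The paper's proof instead takes $\beta(z)=z^2$: the test function is then exactly $u^\alpha$, so \eqref{eq:commutator-diffusion-infinitesimal-rd} applies verbatim and Gr\"onwall gives $\|u\|_{L^\infty_tL^2_x}=0$. The threshold $r\ge 2p/(p-1)$ enters not as a renormalization exponent but because the lemma requires $u\in L^\infty_t(L^{\tilde r}_x\cap L^{\tilde s}_x)$ with $1/p+1/\tilde r+1/\tilde s=1$; choosing $\tilde r=\tilde s=2p/(p-1)$ and using that the difference of two probability solutions in $L^\infty_tL^r_x$ lies (by interpolation with $L^1_x$) in $L^\infty_tL^{2p/(p-1)}_x$ closes the argument. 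Your existence sketch is fine and essentially what the paper does.
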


Actually, the technique employed provides (existence and) uniqueness even without the assumption that $\bar{u}$ is a probability density. As a straightforward consequence of the result above and the equivalence established in the previous section, if we let $\cR$ be the class of narrowly continuous solutions $u$ of the FPE \eqref{eq:fpe-rn},  with $u \in L^\infty_t(L^r_x)$,  we deduce existence and uniqueness for $\cR$-regular martingale problems as well as for  $\cR$-regular martingale flows. The unique regular flow satisfies the Chapman-Kolmogorov equations \eqref{eq:chap-kolmogorov}.

Our second statement deals with non-degenerate (elliptic) diffusions, i.e.\ if it holds, for some $\lambda >0$,  $a(v, v) \ge \lambda \abs{v}^2$, for every $v \in \R^d$, a.e.\ in $(0,T) \times \R^d$. In such a case, we can remove one order of Sobolev regularity assumption from both coefficients, but we introduce Lipschitz regularity for  $t \mapsto a_t$. 

\begin{theorem}[bounded elliptic case]\label{thm:wp-elliptic}
Let $p \in [2,\infty]$, $r \ge 2p/(p-2) \in [2,\infty]$, and $a$, $b$ be as in \eqref{eq:a-b-rn}, with $a \in L^\infty_t(L^\infty_x)$ and elliptic,
\[ \partial_t a \in L^\infty_{t,x}, \quad a \in L^1_t(W^{1,p}_x), \quad b \in L^1_tL^\infty_x \quad \text{and} \quad ( (\nabla^*)^2 a)^{-} \in L^1_t(L^\infty_x).\] Then, for every probability density $\bar{u} \in L^{r}_x$, there exists a unique narrowly continuous solution $u = (u_t)_{t \in [0,T]}$ of the FPE \eqref{eq:fpe-rn} with $u_0 = \bar{u}$  and $u \in L^\infty_t(L^r_x)$.  
\end{theorem}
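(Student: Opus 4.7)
The overall strategy parallels the degenerate case, but trades one order of Sobolev regularity in the coefficients against the ellipticity of $a$. \textbf{Existence} will be obtained by a standard approximation scheme. We mollify the coefficients to get smooth $(a^n,b^n)$ which remain uniformly elliptic, retain the same bounds on $(\nabla^*)^2 a^n$ and $\|b^n\|_{L^1_t L^\infty_x}$, and for which classical parabolic theory produces a smooth solution $u^n$ starting from $\bar u$. The a priori estimate \eqref{eq:gronwall-elliptic-r}, derived formally in Section~\ref{sec:formal}, applies to $u^n$ and is uniform in $n$; it gives $\sup_n \|u^n\|_{L^\infty_t L^r_x}<\infty$. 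Weak-$\ast$ compactness in $L^\infty_t(L^r_x)$ combined with the linear structure of \eqref{eq:fpe-rn} allow one to pass to the limit in the weak formulation \eqref{eq:weak-fpe-rn}; the narrowly continuous representative is supplied by Remark~\ref{rem:extension-weak-formulation}.

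\textbf{Uniqueness} is the heart of the matter. By linearity, it suffices to show that any narrowly continuous solution $u\in L^\infty_t(L^r_x)$ with $u_0\equiv 0$ vanishes identically. Following the philosophy of DiPerna--Lions, I will prove that $u$ is a \emph{renormalized} solution, so that the formal chain-rule computation of Section~\ref{sec:formal} becomes rigorous with $\beta(z)=|z|^r$; the Gronwall-type inequality \eqref{eq:gronwall-elliptic-r}, which under the present hypotheses has finite exponent thanks to $b\in L^1_t L^\infty_x$, $((\nabla^*)^2 a)^-\in L^1_t L^\infty_x$ and $r\ge 2p/(p-2)>2$, then forces $u\equiv 0$.

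To prove renormalization I regularize in space via the heat semigroup, setting $u^\alpha_t=\sfP^\alpha u_t$ for $\alpha>0$. Since $u\in L^\infty_t L^r_x$ and the heat semigroup is smoothing, $u^\alpha$ belongs to $W^{1,1}_t(L^{r'}_x)\cap D^{r'}(\cL,a\nabla\otimes\nabla)$ uniformly in $t$, so the classical chain rule may be applied. A direct computation gives
\[
\partial_t u^\alpha \;=\; \sfP^\alpha\bigl(\cL^*_t u_t\bigr) \;=\; \cL^*_t u^\alpha + \Commutator_\alpha(u)_t,
\qquad \Commutator_\alpha(u)_t := [\sfP^\alpha,\cL^*_t]\,u_t ,
\]
so that multiplying by $\beta'(u^\alpha)$, integrating over $\R^d$ against a nonnegative test function $f\in\Algebra$, and integrating by parts exactly as in the derivation of \eqref{eq:renormalized-1} yields
\[
\partial_t \int f\,\beta(u^\alpha)\,d\scrL^d \;\le\; \int (\partial_t f+\cL_t f)\,\beta(u^\alpha)\,d\scrL^d+\int f\bigl[\beta'(u^\alpha)u^\alpha-\beta(u^\alpha)\bigr](\div\cL)^-\,d\scrL^d+\int f\,\beta'(u^\alpha)\Commutator_\alpha(u)\,d\scrL^d.
\]
The only non-standard term is the last one; once it is shown to vanish as $\alpha\downarrow 0$, the renormalized inequality is established and the Gronwall argument of Section~\ref{sec:formal} applied to $\beta(z)=|z|^r$ concludes the proof.

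\textbf{The main obstacle} is precisely the control of $\Commutator_\alpha(u)$: this is exactly what Lemma~\ref{lemma:commutator-time} is designed for in the time-dependent elliptic setting. The commutator splits into a first-order piece, morally of the form $(\nabla a)\cdot\nabla(\sfP^\alpha u)-\sfP^\alpha((\nabla a)\cdot\nabla u)$ plus analogous $b$-terms, and a purely time-dependent piece coming from $\partial_t a$ after duality, which is estimated via the $L^\infty_{t,x}$ bound on $\partial_t a$. The Bakry--\'Emery/duality technique alluded to in the introduction, combined with the $L^p$--$\Gamma$ smoothing estimates \eqref{eq:smoothing-heat-lpgamma}--\eqref{eq:smoothing-heat-lpdelta}, turns $W^{1,p}_x$-regularity of $a$ and $L^\infty_x$-regularity of $b$ into an $L^1_t L^1_x$ bound on $\Commutator_\alpha(u)$ that is uniform in $\alpha$ and infinitesimal as $\alpha\downarrow 0$; here the restriction $r\ge 2p/(p-2)$ enters to make the H\"older pairing of $\nabla a\in L^p_x$ with $\beta'(u^\alpha)\sim |u|^{r-1}\in L^{r/(r-1)}_x$ and $\nabla\sfP^\alpha u$ integrable. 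Everything else is routine passage to the limit and Gronwall.
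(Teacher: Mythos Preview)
Your plan has a genuine gap in the choice of smoothing operator. You regularize with the \emph{standard} heat semigroup $\sfP^\alpha$ and then try to control the commutator $\Commutator_\alpha(u)=[\sfP^\alpha,\cL^*_t]u_t$. But under the hypotheses of Theorem~\ref{thm:wp-elliptic} the diffusion coefficient $a$ is only in $L^1_t(W^{1,p}_x)$, not in $L^1_t(W^{2,p}_x)$, and the commutator $[\sfP^\alpha, a:\nabla^2]$ with the second-order part cannot be controlled with first-order regularity alone: the interpolation argument (as in Lemma~\ref{lem:commutator-diffusion-rd}) produces terms involving $[\Delta,a^{ij}]=2\nabla a^{ij}\cdot\nabla+\Delta a^{ij}$, and the $\Delta a^{ij}$ contribution requires $a\in W^{2,q}_x$. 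Your claim that the commutator ``splits into a first-order piece, morally of the form $(\nabla a)\cdot\nabla(\sfP^\alpha u)-\sfP^\alpha((\nabla a)\cdot\nabla u)$'' is not correct for a second-order operator with variable coefficients.

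This is precisely why the paper's proof does \emph{not} use the heat semigroup here, but instead the semigroup $\sfP^\alpha_a$ associated with the Dirichlet form $f\mapsto\int a(\nabla f,\nabla f)$. The point is that $\sfP^\alpha_a$ is adapted to the principal part of $\cL$, so no second-order commutator arises; the price is that $\sfP^\alpha_a$ now depends on $t$ through $a$, creating a nontrivial commutator $[\sfP^\alpha_a,\partial_t]$. \emph{That} is what Lemma~\ref{lemma:commutator-time} is designed for, and why the hypothesis $\partial_t a\in L^\infty_{t,x}$ appears. In your scheme, the standard heat semigroup commutes with $\partial_t$ exactly, so there is no ``purely time-dependent piece coming from $\partial_t a$'' and Lemma~\ref{lemma:commutator-time} is irrelevant --- you have misidentified its role. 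The first-order assumption $a\in L^1_t(W^{1,p}_x)$ enters the paper's argument only to show $\sfP^\alpha_a f\in D^2(\cL)$, by writing $[\cL,\sfP^s]f^\alpha=\sum_{i,j}[a_{ij}\partial_i,\sfP^s]\partial_j f^\alpha+\sum_i[b_i,\sfP^s]\partial_i f^\alpha$ and applying Lemma~\ref{lemma:commutator-estimate} to $\partial_j f^\alpha\in L^2_x$; this is where the constraint $r\ge 2p/(p-2)$ actually comes from. Finally, the paper carries out the energy estimate at the $L^2$ level (using that differences of probability densities lie in $L^1\cap L^r\subset L^2$) and exploits ellipticity to absorb the drift term via Young's inequality, rather than working directly with $\beta(z)=|z|^r$.
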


Also in this case, as a straightforward consequence of the equivalence between Eulerian and Lagrangian descriptions,  we obtain well-posedness for $\cR$-regular martingale problems as well as $\cR$-regular flows, with $\cR$ as in the previous case.

\begin{remark}[comparison with existing literature]
The literature on the subject of Fokker-Planck equations and martingale problems is so vast and growing that we limit ourselves to a direct comparison only with very closely related and recent works. In particular, we stress some aspects  which are different from the results appearing in \cite{figalli-sdes}, \cite{lebris-lions}. 

In \cite{lebris-lions}, the approach is mostly Eulerian, dealing with FPE's in divergence form
\[ \partial_t u_t + \nabla^*( u_t b) = \frac 1 2 \nabla^* ( \sigma \sigma^* \nabla u_t), \quad \text{on $(0,T)\times \R^d$,}\]
with $\sigma: \R^d \to \R^{d\times k}$. The main result in \cite{lebris-lions} provides existence and uniqueness for the equation above, provided that
\[
b \in L^1_t(W^{1,1}_{loc}), \quad \frac{b}{1 + |x|} \in L^1_t(L^1_x + L^\infty_x), \quad \nabla^* b \in L^1_t(L^\infty_x)\]
\[
\sigma \in L^2_t(W^{1,2}_{loc}), \, \frac{\sigma}{1 + |x|} \in L^2_t(L^2_x + L^\infty_x).
\]
To compare these assumptions, we must notice as in \cite[\S 5.1]{lebris-lions} that with our notation $a = \sigma \sigma^*$ and the drift is actually $b - \frac 1 2 \nabla^* a$. In view of this correspondence, it might seem that Theorem \ref{thm:wp-degenerate} follows from their weaker assumptions: this follows in principle from a result of the type $\sigma := a^{1/2} \in L^2_t(W^{1,2}_{loc})$, if $a \in L^1_t(W^{2,2}_{loc})$, extending the well-known result \cite[Lemma 3.2.3]{stroock-varadhan} that $a^{1/2}$ is Lipschitz whenever $a \in C^2$. However, their conclusions are in fact weaker, and actually insufficient in order to obtain correspondent Lagrangian results: they prove existence and uniqueness in the class of narrowly continuous probability densities $u \in L^\infty_t(L^\infty_x)$ such that $\sigma \nabla u \in L^2_t(L^2_x)$: the latter (weak) regularity condition then prevents from a straightforward application of the results in Section \ref{sec:correspondence-rn}. In conclusion, our result has (apparently) stronger regularity conditions on the coefficients, but draws stronger results and leads directly to well-posedness of regular martingale problems and flows.

The problem arising from the condition $\sigma \nabla u \in L^2_t(L^2_x)$, which prevents a Lagrangian theory, is well understood in \cite{figalli-sdes}, where much effort is put in showing, for the bounded elliptic case, uniqueness in the class of narrowly continuous probability densities $u \in L^2_t(L^2_x)$ \cite[Theorem 4.3]{figalli-sdes}. When compared with the assumptions of Theorem \ref{thm:wp-elliptic}, an evident difference is that we require a first order condition $a \in L^1_t(W^{1,p}_x)$, while no such requirement appear in \cite[Theorem 1.3]{figalli-sdes}, besides (with our notation) $\nabla^*a$, $\div \cL^- \in L^\infty_t(L^\infty_x)$. The technique we employ -- approximation by the semigroup associated to the Dirichlet form $f \mapsto \int a(\nabla f, \nabla f)$ -- is the same as Figalli's one, and in the elliptic case the novelty is more conceptual, providing a much cleaner derivation of commutator estimates, essentially by the same abstract arguments in the elliptic and the degenerate case. However, in the possibly degenerate case, our results are stronger, compare e.g.\ with  \cite[Theorem 1.4]{figalli-sdes}, as we allow for much more general diffusion coefficients, and possibly unbounded terms -- obtaining as well Lagrangian counterparts.

In more recent years, further developments along these research lines appeared in the literature, as well as different techniques (e.g.\ Crippa-DeLellis' technique \cite{crippa-delellis-08} was extended to SDE's in \cite{zhang-10, rockner-zhang-uniqueness-fpe}): of course, novelties and improvements appear in these developments, but to the author's knowledge that they address different aspects (such as strong solutions, equations with jumps, quantitative estimates, etc.) and there is no substantial overlap with our two results above.

We also point out that the theory of measure-valued solutions (i.e., not necessarily absolutely continuous) Fokker-Planck equations, at least in the elliptic case, is well-developed and some results may be compared with ours. For example, \cite[Proposition 3.1]{bogachev-daprato-rockner-stannat} entail uniqueness if, for some $p \ge d+2$, $a \in L^\infty_t H^{1,p}_x$ is elliptic, $b\in L^p_t L^p_x$ and $t \mapsto a_t$ is H\"older continuous (locally uniformly in $x$).  It is immediate to see that there is no inclusion between such class of coefficients and that of Theorem \ref{thm:wp-elliptic}, and in particular the hypothesis of our result are dimension-free (indeed, we are specializing a theory tailored for infinite dimensional spaces). However, the uniqueness class is smaller in our case, since we restrict from the very beginning to absolutely continuous solutions, which is nevertheless sufficient to entail a reasonable Lagrangian theory. Let us point out some recent developments \cite{bogachev-daprato-rockner-2008, bogachev-rockner-shapo-2011, bogachev-rockner-shapo-2013} and in particular \cite{bogachev-rockner-shapo-2015} which also contains a survey of known results and methods for the degenerate case. Finally, we point out the monograph in preparation \cite{bogachev-rockner-krylov-shapo-2015}, which contains a detailed study and a vast bibliography on the subject.
\end{remark}

Let us briefly discuss some features of the two theorems above and their proof. First, existence of weak solutions in the hypothesis stated above is a much easier task than uniqueness: for example, one can argue by  approximation via convolution of the coefficients (and the initial law) with a smooth kernel, so that the estimates on the coefficients are preserved, and one gains enough regularity (e.g.\ $C^2$ coefficients) so existence is available even at the Lagrangian level. Then, we have enough regularity so that the deductions which lead to inequality \eqref{eq:renormalized-0}  apply and by a Gronwall argument we deduce a bound in $L^\infty_t(L^r_x)$, in terms of $\div \cL^-$ only, and uniform in the approximation (in the elliptic case, we argue with \eqref{eq:gronwall-elliptic-r} instead). By extracting a weakly convergent sequence and by strong convergence of the approximations of the coefficients, we deduce that any weak limit point in $L^\infty_t(L^r_x)$ is a weak solution to the FPE \eqref{eq:fpe-rn}. In the elliptic case, we deduce as well existence for a solution $u \in L^\infty_t(L^r_x) \cap L^2_t(W^{1,2}_x)$. Let us also recall the approach \cite[Theorem 4.3]{figalli-sdes}, which is completely Eulerian (i.e., it relies on PDE's techniques only), and has the advantage of yielding easily uniqueness, for solutions belonging to such a (smaller) space, which does not allow for applications of the theory developed in Section \ref{sec:correspondence-rn}.

In order to establish uniqueness of solutions, our aim is to rigorously establish \eqref{eq:renormalized-1} and \eqref{eq:gronwall-elliptic-r}, for (difference of) solutions $u \in L^\infty_t(L^r_x)$. As already remarked, the main problem is related to the regularity of $u$, in order to employ the standard calculus rules. Our strategy relies the well-known smoothing scheme, which dates back at least to \cite{diperna-lions}: for $\alpha \in (0,1)$ we introduce some linear operator $\sfP^\alpha$, acting on functions defined on $(0,T) \times \R^d$ such that, by defining $u^\alpha := \sfP^\alpha u$, we obtain an approximation of $u$ sufficiently regular to rigorously obtain \eqref{eq:renormalized-1}. Of course, the price that we pay is that $u^\alpha$, in general, is not a solution of \eqref{eq:fpe-rn} and one has to carefully estimate the ``error terms'' thus appearing: our novel contribution indeed provides a systematic approach to such inequalities.

To be more precise, in the cases that we consider, the operators $(\sfP^\alpha)_{\alpha \ge 0}$ form a strongly-continuous Markov symmetric semigroup on $L^2( (0,T)\times \R^d, \scrL^{d+1} )$, so that, in particular, $\sfP^\alpha$ preserves all $L^{p}_tL^q_x$ spaces, for $p, q \in [1,\infty]$. If we also prove that $\sfP^\alpha$ maps $W^{1,1}_t(L^{r'}) \cap D^{r'}(\cL)$ into itself, we may write, for $f$ belonging to such space,
\begin{equation}\label{eq:weak-formulation-extended} \int_0^T \int \sqa{(\partial_t + \cL _t) f} u^\alpha_t d \scrL^d dt = \int f^\alpha_T u_T d\scrL^d - \int f^\alpha_0 u_0 d\scrL^d + \int_{(0,T)\times \R^d}     u  \sqa{ \sfP^\alpha,  (\partial_t + \cL _t) } f d\scrL^{1+d},\end{equation}
since the weak formulation \eqref{eq:weak-fpe-rd-hp} extends by density of $\cA$ in $W^{1,1}_t(L^{r'}) \cap D^{r'}(\cL)$. The commutator term appears as an algebraic way to highlight the identity as an equation for $u^\alpha$, and all the issue is to show that it is infinitesimal as $\alpha \downarrow 0$.

Next, we prove that $\sfP^\alpha$ has a ``smoothing effect'', 
in a sense that we can choose $\beta'(u^\alpha)$ as a test function, and apply the chain rule with respect to $\partial_t$ and \eqref{eq:ibp-diffusion}, so
\[
\partial_t \int \beta(u^\alpha_t) d\scrL^d  \le \int_{\R^d} \sqa{ \beta'(u_t^\alpha)u_t^\alpha  - \beta(u_t^\alpha)} \div \cL^-_t d \scrL^{d} + \int_{\R^d}  u_t \sqa{ \sfP^\alpha,  (\partial_t + \cL _t) } \beta'(u^\alpha_t)  d \scrL^{d}, 
\]
$\scrL^1$-a.e.\ $t \in (0,T)$ and in the sense of distributions on $(0,T)$. Finally, we let $\alpha \downarrow 0$, and by strong convergence of $u^\alpha$ towards $u$ in $L^1_t(L^r_x)$, we are able to conclude, provided
\[ \int_{\R^d}  u_t  \sqa{ \sfP^\alpha,  (\partial_t + \cL _t) } \beta'(u^\alpha_t)  d \scrL^{d} \le \varepsilon(\alpha) \to 0, \quad  \text{ in $L^1(0,T)$ as $\alpha \downarrow 0$}.\]

\subsection{Commutator inequalities}\label{sec:commutators}

In this section, we estimate the ``error terms''  involving the commutator between $\sfP^\alpha$ and $\partial_t + \cL$. Our general strategy is a further development of that first introduced in \cite{ambrosio-trevisan}, in the framework of continuity equations in metric measure spaces, and it is completely ``coordinate free'' and depends  on an interpolation argument \emph{\`a la} Bakry-\'Emery, namely
\begin{equation*}\begin{split}
 \int  u  \sqa{ \sfP^\alpha,  \partial_t + \cL } f d\scrL^d & = \int \sqa{ \sfP^\alpha u(\partial_t + \cL)   f  - u (\partial_t + \cL) \sfP^\alpha f} d\scrL^d \\
 & = \int \int_0^\alpha \frac{d}{ds}\sqa{ (\sfP^s u) (\partial_t + \cL)  \sfP^{\alpha-s} f} ds  d\scrL^d \\
 &= \int_0^\alpha  \int \sfP^s u \sqa{ \sfDelta, \partial_t + \cL  } \sfP^{\alpha-s} f d\scrL^d ds,
 \end{split}\end{equation*}
where we let $\sfDelta$ be the generator of $(\sfP^\alpha)_{\alpha \ge 0}$. It turns out that the commutator between $\sfDelta$ and $\partial_t + \cL$, reflecting the ``relative regularity'' between the chosen approximation and the target diffusion, depends upon natural quantities such as Sobolev regularity of the coefficients.

In principle, this method provides very general results but, for the ease of exposition, we address separately only three cases, which are of particular interest: the case of a commutator between the Euclidean heat semigroup and a Sobolev derivation, which is a specialization of \cite[Lemma 5.8]{ambrosio-trevisan} in the Euclidean case; that of a commutator between the Euclidean heat semigroup and a second-order Sobolev diffusion, which is apparently novel, that we settle by performing a ``second order'' interpolation argument; and finally that of the commutator between $\partial_t$ and a non-degenerate diffusion acting only the variable $x \in \R^d$, with $t \mapsto a_t$ Lipschitz, which provides an alternative approach to Step 2 in \cite[Theorem 4.3]{figalli-sdes}.

We let throughout $q \in (1,\infty]$, $r$, $s \in (1, \infty)$, with $q^{-1} + r^{-1} + s^{-1}  =1$ (one can deal with endpoint case at the price of more delicate approximations).

\begin{lemma}\label{lemma:commutator-estimate}
Let $b \in L^1_t(W^{1,q}_x)$,  $u\in L^\infty_t(L^r_x)$ and $f \in L^\infty_x(W^{1,s}_x)$. It holds
\begin{equation} \label{eq:commutator-sobolev-derivation-bilinear} \int_0^T \abs{ \int u_t [ \sfP^\alpha, b_t \cdot \nabla ] f_t d\scrL^d } dt \le  c \nor{\nabla b}_{L^1_tL^q_x} \nor{u}_{L^\infty_tL^r_x}  \nor{f}_{L^\infty_tL^s_x},\, \text{ for $\alpha \in (0,1)$,}\end{equation}
where $c \in \R$ is some constant (depending on the dimension $d$ only).
\end{lemma}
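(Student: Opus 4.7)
The plan is to follow the Bakry--\'Emery-style interpolation argument from \cite[Lemma~5.8]{ambrosio-trevisan}, specialized to the Euclidean heat semigroup. Let $\sfDelta = \tfrac12\Delta$ denote the generator of $(\sfP^\alpha)_{\alpha\ge 0}$ on $L^2(\R^d)$. Applying the fundamental theorem of calculus to $s\mapsto \sfP^s(b_t\cdot\nabla \sfP^{\alpha-s} f_t)$ and using the self-adjointness of $\sfP^s$ in $L^2(\R^d)$, one obtains, for fixed $t\in(0,T)$,
\[
\int u_t\,[\sfP^\alpha, b_t\cdot\nabla] f_t\,d\scrL^d = \int_0^\alpha \int (\sfP^s u_t)\,[\sfDelta, b_t\cdot\nabla](\sfP^{\alpha-s} f_t)\,d\scrL^d\,ds.
\]
An explicit computation yields $[\sfDelta, b_t\cdot\nabla] = \tfrac12(\Delta b_t)\cdot\nabla + \nabla b_t:\nabla^2$, so the key algebraic issue is that the second derivative $\Delta b_t$ is not available under the hypothesis $b_t\in W^{1,q}_x$.

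The core of the argument is then to redistribute derivatives via integration by parts so that the inner integrand depends only on $\nabla b_t$, paired with $\nabla\sfP^s u_t$ and $\nabla\sfP^{\alpha-s} f_t$. Integrating by parts on the $(\Delta b_t)\cdot\nabla$ term against $\sfP^s u_t$ transfers one derivative onto the product, and a further, symmetric integration by parts on the residual $\nabla b_t:\nabla^2\sfP^{\alpha-s}f_t$ shifts the remaining second derivative of $\sfP^{\alpha-s} f_t$ onto $\sfP^s u_t$ (up to lower-order terms in $\text{div}\,b_t$ that are absorbed via self-adjointness and H\"older). The resulting bilinear form is controlled, by H\"older's inequality with exponents $(q,r,s)$ (legitimate because $q^{-1}+r^{-1}+s^{-1}=1$) and the $L^p$--$\Gamma$ smoothing estimate \eqref{eq:smoothing-heat-lpgamma}, by
\[
c\,\nor{\nabla b_t}_{L^q_x}\,s^{-1/2}(\alpha-s)^{-1/2}\,\nor{u_t}_{L^r_x}\,\nor{f_t}_{L^s_x}.
\]
Integration in $s\in(0,\alpha)$ uses the beta identity $\int_0^\alpha s^{-1/2}(\alpha-s)^{-1/2}\,ds=\pi$, uniform in $\alpha\in(0,1)$, which produces a pointwise-in-$t$ bound of the desired form.

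Integrating in $t\in(0,T)$ and extracting $\nor{u}_{L^\infty_tL^r_x}$, $\nor{f}_{L^\infty_tL^s_x}$, $\nor{\nabla b}_{L^1_tL^q_x}$ gives \eqref{eq:commutator-sobolev-derivation-bilinear}. The integrations by parts, formal for merely $L^p$-data, are justified by first proving the inequality for smooth $b, u, f$ and then approximating the general case through convolution in the spatial variable with a smooth kernel and passing to the limit, exploiting lower semicontinuity of the right-hand side in the appropriate topologies.

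The main obstacle is step two: the redistribution must be arranged so that each factor absorbs only a single $\nabla$, since a naive estimate of the leftover $\nabla b_t:\nabla^2\sfP^{\alpha-s}f_t$ via the second-order bound \eqref{eq:smoothing-heat-lpdelta} would produce a non-integrable $(\alpha-s)^{-1}$ singularity at $s=\alpha$. The symmetric distribution exploited here is precisely the ``coordinate-free'' feature of the Bakry--\'Emery interpolation that \cite{ambrosio-trevisan} develops in general metric measure spaces; in the Euclidean case it reduces to the chain of integrations by parts sketched above.
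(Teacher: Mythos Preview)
Your overall strategy---Bakry--\'Emery interpolation along $s\mapsto \int (\sfP^s u)\,b\cdot\nabla(\sfP^{\alpha-s}f)\,d\scrL^d$, reduction to smooth data, then H\"older and \eqref{eq:smoothing-heat-lpgamma}---is exactly the paper's. The gap is in the ``symmetric integration by parts'' step. After one round of integration by parts the derivative of the interpolation curve reads
\[
\int\Bigl[(\nabla b+(\nabla b)^\tau)(\nabla u^s,\nabla f^{\alpha-s})+(\nabla^* b)\bigl(\nabla u^s\cdot\nabla f^{\alpha-s}-u^s\,\Delta f^{\alpha-s}\bigr)\Bigr]\,d\scrL^d,
\]
and the first two pieces do satisfy your pointwise-in-$s$ bound $c\nor{\nabla b}_{L^q_x}\,s^{-1/2}(\alpha-s)^{-1/2}\nor{u}_{L^r_x}\nor{f}_{L^s_x}$. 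The last piece, $(\nabla^* b)\,u^s\,\Delta f^{\alpha-s}$, does not: any further integration by parts that strips a derivative off $\Delta f^{\alpha-s}$ lands a derivative on $\nabla^* b$, i.e.\ produces a second derivative of $b$, which is unavailable under $b\in W^{1,q}_x$; and moving $\Delta$ by self-adjointness sends it onto $(\nabla^* b)\,u^s$ with the same effect. So the pointwise bound you claim is not attainable for this term, and the proof as written does not close.

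The paper's fix is \emph{not} a spatial integration by parts but a manoeuvre in the interpolation variable: one writes $\Delta f^{\alpha-s}=-\frac{d}{ds}f^{\alpha-s}$, integrates in $s$ first, and subtracts the constant $u^\alpha$, splitting the $s$-integral into a harmless telescoped boundary term $u^\alpha(f^0-f^\alpha)$ and the remainder $\int_0^\alpha (u^s-u^\alpha)\,\Delta f^{\alpha-s}\,ds$. For the remainder one uses the improved estimate $\nor{u^s-u^\alpha}_{L^r_x}=\nor{(I-\sfP^{\alpha-s})u^s}_{L^r_x}\le c\sqrt{\alpha-s}\,\nor{\nabla u^s}_{L^r_x}\le c\sqrt{(\alpha-s)/s}\,\nor{u}_{L^r_x}$, which cancels half of the $(\alpha-s)^{-1}$ from \eqref{eq:smoothing-heat-lpdelta} and restores the integrable $1/\sqrt{s(\alpha-s)}$ singularity. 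This ``half-derivative in $s$'' trick is the missing ingredient in your sketch.
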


Actually, the proof below shows that $\nabla b$ can be replaced with the symmetric part of the derivative (also called deformation) $D^{sym} b := ( \nabla b + (\nabla b)^{\tau} )/2$, where $\tau$ denotes the transpose operator.

As a consequence of \eqref{eq:commutator-sobolev-derivation-bilinear}, the commutator operator  $L^\infty_x(W^{1,s}_x) \ni f\mapsto [ \sfP^\alpha, b \cdot \nabla ] f \in L^1_tL^{r'}_x $ extends to a linear continuous operator on $L^\infty_tL^s_x$. Moreover, a standard density and uniform boundedness argument entails that, for $f \in L^\infty_tL^s_x$,
\begin{equation*}
[ \sfP^\alpha, b \cdot \nabla ] f  \to 0, \quad \text{ strongly in $L^1_t(L^{r'}_x)$ as $\alpha \downarrow 0$.}\end{equation*}

\begin{proof}

It is sufficient to argue assuming that $b$, $u$ and $f$ are sufficiently smooth, e.g., $u \in \cA_c$, $f \in \cA$, as well as $b^i \in \cA$, for $i \in \cur{1, \ldots, d}$, as the general inequality will follow by approximation (e.g.\ by convolution with a smooth kernel). Moreover, we argue at $t \in (0,T)$ fixed and then integrate over the interval $(0,T)$: thus we omit to specify $t \in (0,T)$ in what follows.

The curve $s \mapsto F(s) =  \int u^s  b \cdot \nabla f^{\alpha-s} d\scrL^d$ is then $C^1_b(0,\alpha)$, with
\[ \frac{d}{ds} F (s)   = \int (\Delta u^s) b \cdot \nabla f^{\alpha-s} - u^s  b \cdot \nabla (\Delta f^{\alpha-s}) d\scrL^d,\]
By straightforward integration by parts, we obtain the following alternative expression for the right hand side above:
\[ \frac{d}{ds} F (s) = \int \sqa{ \bra{ (\nabla b + (\nabla b)^\tau ) \nabla  u^s ,  \nabla f^{\alpha-s}} + (\nabla^* b)\bra{  \nabla u^s \cdot \nabla f^{\alpha-s} -  u^s \Delta f^{\alpha-s} }} d\scrL^d.\]
If $\nabla^* b = 0$, the conclusion is immediate, since we may estimate $\abs{F(\alpha) - F(0)} \le \int_0^\alpha \abs{  \frac{d}{ds} F (s) } ds$ and, by H\"older inequality, 
\begin{equation*}\begin{split} \abs{ \int u [ \sfP^\alpha, b \cdot \nabla ] f d\scrL^d } & \le  2 \int_0^\alpha \nor{D^{sym}b}_{L^q_x}\nor{ \nabla  u^s }_{L^r_x} \nor{ \nabla  f^{\alpha-s} }_{L^s_x} ds \\ & \le  \nor{D^{sym}b}_{L^q_x} \nor{ u }_{L^r_x} \nor{f }_{L^s_x} \int_0^\alpha \frac{2ds}{\sqrt{ s(\alpha-s)}} \\ & 
\le 2\pi \nor{D^{sym}b}_{L^q_x} \nor{ u }_{L^r_x} \nor{f }_{L^s_x}.
\end{split}
\end{equation*}
by \eqref{eq:smoothing-heat-lpgamma} and using $\int_0^1 (s (1-s))^{-1/2} ds = \pi$.

The general case $\nabla^* b \in L^q_x$ is slightly more involved: let us first notice that the term $(\nabla^* b) \nabla u^s \cdot \nabla f^{\alpha-s}$ can be estimated as above, adding a contribution $\pi \nor{ \nabla^* b}_{L^q_x}$ to the inequality. Finally, to estimate the contribution of $(\nabla^* b) u^s \Delta f^{\alpha-s}$ we do not put the absolute value inside integration with respect to $s \in (0,\alpha)$, but exchange integration with respect to $x$ and $s$,  exploiting the identity
\[ \int_0^\alpha (\nabla^* b) u^s \Delta f^{\alpha-s} ds = -(\nabla^* b) \int_0^\alpha u^s  \frac{d}{ds} f^{\alpha-s} ds. \]
Next, to integrate by parts only ``half of the derivative'' with respect to $s$, we simply add $(\nabla^* b) u^\alpha $ times the quantity
\[  f^0 - f^\alpha - \int_0^\alpha \frac{d}{ds} f^{\alpha-s}  = 0,\]
thus
\[ \abs{\int_0^\alpha u^s  \frac{d}{ds} f^{\alpha-s} ds} \le \abs{  u^\alpha \bra{ f^0 - f^\alpha  }} + \int_0^\alpha \abs{ (u^s - u^\alpha) \Delta f^{\alpha-s} } ds,\]
which, once integrated with respect to $x \in \R^d$, by H\"older inequality and \eqref{eq:smoothing-heat-lpdelta} is bounded from above by
\[  \nor{ \nabla^* b}_{L^q_x}\nor{ u }_{L^r_x} \nor{ f}_{L^s_x} \bra{ 2   + c \int_0^\alpha \frac{ds } { \sqrt{s (s -\alpha)}} }.\]
This settles an analogue of \eqref{eq:commutator-sobolev-derivation-bilinear} at fixed $t \in (0,T)$, and by integration with respect to $t \in (0,T)$, we obtain \eqref{eq:commutator-sobolev-derivation-bilinear}.
\end{proof}

The constant $c$ can be even independent of the dimension $d$ of the underlying space, provided that assume some bound directly on $\nor{ \nabla^* b }_{L^q_x}$, and use a refined, dimension independent estimate for $\nor{ \Delta f^{\alpha-s} }_{L^s_x}$: these are the key observation that lead to well-posedness on possibly infinite dimensional spaces, as developed in \cite{ambrosio-trevisan}.

 \begin{lemma}\label{lem:commutator-diffusion-rd} 
Let $a \in L^1_t(W^{2,q}_x)$, $u\in L^\infty_t(L^r_x)$ and $f \in L^\infty_x(W^{2,s}_x)$. For $\alpha \in (0,1)$, it holds 
\begin{equation}
 \label{eq:commutator-taylor-second-order-rd} \int_0^T \abs{ \int u_t [\sfP^\alpha, a_t : \nabla^2] f_t d\scrL^d  - \alpha \int u_t [\Delta, a_t : \nabla^2] \sfP^\alpha f_t d\scrL^d}dt \le c \nor{\nabla ^2 a}_{L^\infty_tL^q_x}  \nor{u}_{L^\infty_tL^r_x} \nor{f}_{L^\infty_xL^s_t}\end{equation}
where $c$ is some constant (depending on $d$ only). Moreover, for $u \in L^\infty_t( L^r_x \cap L^s_x)$, it holds
\begin{equation}\label{eq:commutator-diffusion-infinitesimal-rd}\abs{ \int u [\sfP^\alpha, a : \nabla^2](\sfP^\alpha u) d\scrL^d } \to 0, \quad \text{in $L^1(0,T)$, as $\alpha \downarrow 0$.}\end{equation}
\end{lemma}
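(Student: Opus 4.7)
The approach would rest on the Bakry-Émery interpolation identity
\[ [\sfP^\alpha, A] = \int_0^\alpha \sfP^s [\Delta, A] \sfP^{\alpha-s}\, ds, \qquad A := a:\nabla^2,\]
obtained by differentiating $s \mapsto \sfP^s A \sfP^{\alpha-s}$ and using $[\sfP^s, \Delta] = 0$. A direct computation yields
\[ [\Delta, A]g = \sum_{i,j,k}\bigl[(\partial_k^2 a^{ij})\partial_{ij}g + 2(\partial_k a^{ij})\partial_{ijk}g\bigr], \]
so only $\nabla^2 a$ and $\nabla a$ enter, matching the hypothesis $a \in L^1_t(W^{2,q}_x)$.

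For \eqref{eq:commutator-taylor-second-order-rd}, I would write $\alpha \int u[\Delta, A]\sfP^\alpha f\, dx = \int_0^\alpha \int u[\Delta, A]\sfP^\alpha f \, dx\, ds$, so the quantity to estimate becomes
\[ \int_0^\alpha \int u\bigl\{\sfP^s[\Delta, A]\sfP^{\alpha-s} - [\Delta, A]\sfP^\alpha\bigr\}f\,dx\,ds. \]
The operator difference splits naturally as $(\sfP^s - I)[\Delta, A]\sfP^{\alpha-s} + [\Delta, A](\sfP^{\alpha-s} - \sfP^\alpha)$, and writing $\sfP^\beta - I = \int_0^\beta \Delta\sfP^r \, dr$ produces an extra factor of $\Delta$ in each piece. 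After moving the semigroups onto $u$ by self-adjointness and redistributing derivatives via carefully chosen integrations by parts --- always respecting the constraint that at most two derivatives fall on $a$ --- each resulting term is controlled by H\"older's inequality with $1/q + 1/r + 1/s = 1$, together with the smoothing bounds \eqref{eq:smoothing-heat-lpgamma}--\eqref{eq:smoothing-heat-lpdelta}. The singularities of the form $s^{-1/2}(\alpha-s)^{-1/2}$ that arise are tamed by $\int_0^1 (s(1-s))^{-1/2} ds = \pi$.

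For \eqref{eq:commutator-diffusion-infinitesimal-rd} the pivotal observation is the pointwise identity
\[ [\sfP^\alpha, A]\sfP^\alpha u = \sum_{i,j}[\sfP^\alpha, a^{ij}]\,\partial_{ij}\sfP^\alpha u, \]
valid because $\partial_{ij}$ commutes with $\sfP^\alpha$; this reduces the question to a commutator of $\sfP^\alpha$ with multiplication by $a^{ij}$ (of order zero), applied to $\partial_{ij}\sfP^\alpha u$. Applying Bakry-Émery again (with $[\Delta, a^{ij}]g = (\Delta a^{ij})g + 2\nabla a^{ij}\cdot\nabla g$), integrating by parts on the gradient term to redistribute derivatives, and bounding via \eqref{eq:smoothing-heat-lpgamma}--\eqref{eq:smoothing-heat-lpdelta} together with the ensuing elementary $s$-integrals, would yield a pointwise-in-$t$ bound of the form $C\|\nabla^2 a\|_{L^q_x}\|u\|_{L^r_x}\|u\|_{L^s_x}$, hence uniform in $\alpha$ and integrable over $(0,T)$ by assumption.

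To upgrade this uniform boundedness to $L^1_t$-convergence to $0$, I would argue by density. For smooth, compactly supported $v$, $\sfP^\alpha v \to v$ in $C^2$ as $\alpha \downarrow 0$, whence $\int v[\sfP^\alpha, A]\sfP^\alpha v\,dx \to 0$ pointwise in $t$, dominated by an $L^1_t$ function; for general $u \in L^\infty_t(L^r_x \cap L^s_x)$, one approximates by such $v^n$ in $L^1_t(L^r_x \cap L^s_x)$, exploits the bilinearity of the commutator in $u$, and combines with the uniform bound. The main technical obstacle is the bookkeeping in \eqref{eq:commutator-taylor-second-order-rd}: every integration by parts must be arranged so that the derivatives of $a$ never exceed order $2$ and the resulting $s$-singularities remain integrable.
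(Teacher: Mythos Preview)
Your outline for \eqref{eq:commutator-taylor-second-order-rd} is, after Fubini, the same second-order Taylor expansion the paper uses: writing $F(s)=\int u^s a^{ij}\partial_{ij}^2 f^{\alpha-s}$, one has $F(\alpha)-F(0)-\alpha F'(0)=\int_0^\alpha F''(\sigma)(\alpha-\sigma)\,d\sigma$, and your split of $F'(s)-F'(0)$ followed by $P^\beta-I=\int_0^\beta\Delta P^r\,dr$ reproduces this. However, you understate the difficulty: the naive H\"older bound on $F''(\sigma)$ produces, besides the integrable $\sigma^{-1/2}(\alpha-\sigma)^{-1/2}$ terms you mention, also terms scaling like $(\alpha-\sigma)^{-2}$ (from $u^\sigma(\Delta a^{ij})\Delta h^{\alpha-\sigma}$ with $h=\partial_{ij}^2 f$). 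The paper handles these not by a direct bound but by add--subtract arguments: one inserts $u^\alpha$ in place of $u^\sigma$, uses the Riesz transform $R_{ij}=\partial_{ij}^2\Delta^{-1}$, and exploits the explicit antiderivative $\int_0^\alpha\Delta^2 f^{\alpha-s}(\alpha-s)\,ds=-f^\alpha+f+\alpha\Delta f^\alpha$. None of this is visible in your sketch.

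For \eqref{eq:commutator-diffusion-infinitesimal-rd} there is a genuine gap. After your reduction to $[\sfP^\alpha,a^{ij}]\partial_{ij}\sfP^\alpha u$ and Bakry--\'Emery, the gradient piece gives
\[
\int_0^\alpha\!\!\int u^s\,(\partial_k a^{ij})\,\partial_{ijk}\sfP^{2\alpha-s}u\,dx\,ds,
\]
and one integration by parts (in any coordinate) leaves a term of the form $\int_0^\alpha\!\int(\partial_\ell u^s)(\partial_k a^{ij})\partial_{mn}\sfP^{2\alpha-s}u$, bounded by $\|\nabla a\|_{L^q_x}\int_0^\alpha s^{-1/2}(2\alpha-s)^{-1}\,ds\sim\alpha^{-1/2}$, which blows up as $\alpha\downarrow 0$. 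Further integration by parts does not help: the obstruction is that $u^s$ and $\sfP^{2\alpha-s}u$ are different functions, so the product of their derivatives is not a total derivative. The paper avoids this by a different route: it first uses \eqref{eq:commutator-taylor-second-order-rd} to reduce the question to showing that $\alpha\int u[\Delta,a:\nabla^2]\sfP^{2\alpha}u\to 0$, and then, crucially, exploits the \emph{symmetry} of $a$. After one integration by parts the dangerous term is $\alpha\sum_{ijk}\int(\partial_i u^\alpha)(\partial_k a^{ij})\partial_{kj}u^\alpha$; pairing $(i,j)$ with $(j,i)$ and using $a^{ij}=a^{ji}$ gives $(\partial_k a^{ij})\,\partial_k(\partial_i u^\alpha\,\partial_j u^\alpha)$, which integrates by parts to $-(\partial_k^2 a^{ij})(\partial_i u^\alpha)(\partial_j u^\alpha)$, now controlled by $\|\nabla^2 a\|_{L^q_x}$ with the right $\alpha$-power. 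This symmetrisation is possible only because both factors are derivatives of the \emph{same} function $u^\alpha$; in your interpolated expression the two factors involve $u^s$ and $\sfP^{2\alpha-s}u$, and the trick is unavailable. Your density argument at the end is fine, but it presupposes the uniform bound, which is exactly what fails.
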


\begin{proof}
To establish \eqref{eq:commutator-taylor-second-order-rd}, the underlying idea is to formally rewrite $a : \nabla^2 f = a: (\nabla^2 \Delta^{-1}) \Delta f$ and exploit the boundedness of the Riesz transform $\nabla^2 \Delta^{-1}$ in $L^s_x$, together with a second order interpolation along the heat semigroup. To make computations more transparent, we argue on coordinates, i.e., we fix $i$, $j \in \cur{1, \ldots, d}$ and consider the commutator
\[ [ \sfP^\alpha, a^{i,j} \partial^2_{i,j}] f = \sfP^\alpha (a^{i,j} \partial^2_{i,j} f) - a^{i,j} \partial^2_{i,j} (\sfP^\alpha f).\]
As in the proof of the previous lemma, we may also let $u \in \cA_c$, $f$ and $a^{i,j}$ be sufficiently regular, e.g.\ $f$, $a^{i,j} \in C^4_b((0,T)\times \R^d)$, and argue at fixed $t \in (0,T)$. We consider the curve
\[ [0,\alpha] \ni s \mapsto F(s) := \int u^s a^{i,j} \partial^2_{i,j} f^{\alpha -s} d\scrL^d,\]
which is $C^1_b(0,\alpha)$, with
\[ F'(s) =  \int u^{s} [\Delta, a^{i,j} \partial^2_{i,j} ] f^{\alpha- s} d\scrL^d = \int u^{s} [\Delta, a^{i,j} ] \partial^2_{i,j} f^{\alpha- s} d\scrL^d,\]
since the Laplacian and partial derivatives commute. We write $h^{\alpha-s} := \partial^2_{i,j} f^{\alpha- s} = (\partial^2_{i,j} f)^{\alpha- s}$ (since derivatives and heat semigroup commute), let $b := \nabla a^{i,j}$ and integrate by parts, obtaining
\[ F'(s) =   2\int u^s b \cdot \nabla h^{\alpha-s}d\scrL^d + \int u^s (\Delta a^{i,j}) h^{\alpha-s} d\scrL^d. \]
Differentiating once more, since $F \in C^2_b(0,\alpha)$,  we obtain 
\[F''(s) = 2\int u^s [\Delta, b \cdot \nabla ] h^{\alpha-s} d\scrL^d  + \int u^s [\Delta, (\Delta a^{i,j})] h^{\alpha-s} d\scrL^d. \]
We introduce a second order interpolation based on the Taylor expansion
 \[F(\alpha) - F(0) - \alpha F'(0) = \int_0^\alpha F''(\sigma) (\alpha - \sigma) d\sigma,\]
and we notice that the left hand side gives, up to integration on $(0,T)$,  the left hand side of \eqref{eq:commutator-taylor-second-order-rd}.

Let us notice first how we would conclude in case $\nabla^* b = \Delta a^{i,j} = 0$, and then address the general case. As in the previous lemma, we obtain the identity
\[ \int u^{s} [\Delta, b \cdot \nabla ] h^{\alpha - s} d\scrL^d = - 2 \int \bra { (\nabla^2 a^{i,j})  \nabla u^{s},  \nabla h^{\alpha-s}} d\scrL^d \]
and we estimate
\begin{equation*}\begin{split}
\abs{F''(s)} &\le  4\nor{\nabla^2 a^{i,j}}_{L^q_x} \| \nabla u^s \|_{L^r_x}  \|\nabla h^{\alpha -s}\|_{L^s_x} \\
&\le  \frac{c }{\sqrt{s (\alpha - s)^3}} \nor{\nabla^2 a^{i,j}}_{L^q_x}\nor{u}_{L^r_x} \nor{ f }_{L^s_x},
\end{split}
\end{equation*}
where $c$ is some constant. Integrating with respect to $s \in (0,\alpha)$ and exploiting the factor $(\alpha -\sigma)$ to compensate the bound the norm of $h^{\alpha-s}$, we deduce \eqref{eq:commutator-taylor-second-order-rd}.

To address the general, we bound separately the terms
\begin{equation}
\label{eq:first-second-block-rd}
 \int_0^\alpha \int u^s [\Delta, b \cdot \nabla ] h^{\alpha-s} d\scrL^d\,  (\alpha -s) ds \quad \text{and} \quad \int_0^\alpha \int  u^s [\Delta, (\Delta a^{i,j}) ] h^{\alpha-s} d\scrL^d \, (\alpha -s) ds.\end{equation}

To deal with former, we isolate a ``leading term'' which involves $\nabla^2 a^{i,j}$ and we bound the remaining terms it by adding and subtracting suitable quantities, with the only difficulty that we must take into account the second order expansion. Precisely, after arguing as in the case $\Delta a^{i,j} = 0$, we are left with estimating
\begin{equation}
\label{eq:what-is-left-diffusions} \int_0^\alpha \int u^s (\Delta a^{i,j}) \Delta  h^{\alpha-s} (\alpha - s) d\scrL^d ds,  \end{equation}
and to this aim we add and subtract
\begin{equation}\label{eq:add-subtract-second-order-rd}  \int_0^\alpha \int u^\alpha (\Delta a^{i,j}) \Delta  h^{\alpha-s} (\alpha - s) d\scrL^d ds =  \int_0^\alpha \int u^\alpha (\Delta a^{i,j}) R_{i,j} \Delta^2 f^{\alpha-s} (\alpha - s) d\scrL^d ds,\end{equation}
where we let $R_{i,j}f := \partial^2_{i,j} \Delta^{-1}f$ be the second-order Riesz transform along the directions $i$, $j$. The difference between the \eqref{eq:what-is-left-diffusions} and \eqref{eq:add-subtract-second-order-rd} is easily bounded and to conclude, we exploit the identity 
\[
\int_0^\alpha \Delta^2 f^{\alpha-s} (\alpha - s)  ds = \int_0^\alpha(\alpha - s)  \partial^2_s f^{\alpha-s}   ds = - f^\alpha + f + \alpha \Delta f^\alpha.
\]
and use the fact that the latter quantity is uniformly bounded (and that $R_{i,j}$ is a bounded operator).

To estimate the second expression in \eqref{eq:first-second-block-rd}, we notice that
\[ \int u^s [\Delta, (\Delta a^{i,j}) ] h^{\alpha -s} d\scrL^d = \frac{d}{ds} \int u^{s} (\Delta a^{i,j}) \partial^2_{i,j} f^{\alpha -s} d\scrL^d,\]
thus we integrate by parts with respect to $s \in (0,\alpha)$,
\[ \int_0^\alpha \frac{d}{ds} u^{s} (\Delta a^{i,j}) \partial^2_{i,j} f^{\alpha -s}  (\alpha-s) ds= - \alpha  u (\Delta a^{i,j}) \partial^2_{i,j} f^{\alpha}  + \int_0^\alpha u^{s} (\Delta a^{i,j}) \partial^2_{i,j} f^{\alpha -s}.\]
The first term in the right hand side above is bounded by $c\nor{\Delta a^{i,j}}_{L^q_x} \nor{u}_{L^r_x} \nor{f}_{L^s_x}$. We write
\[ \int_0^\alpha \int u^{s} (\Delta a^{i,j}) \partial^2_{i,j} f^{\alpha -s} d\scrL^d ds = \int_0^\alpha \int u^{s} (\Delta a^{i,j}) R_{i,j} \Delta f^{\alpha - s} d\scrL^d ds.\]
To conclude, we argue once more by adding and subtracting
\[ \int_0^\alpha  \int u^{\alpha} (\Delta a^{i,j}) R_{i,j} \Delta f^{\alpha - \sigma} d\scrL^d ds=  \int u^{\alpha} (\Delta a^{i,j}) R_{i,j} (f^\alpha - f)d\scrL^d,\]
and estimating the differences involved. This settles the validity of \eqref{eq:commutator-taylor-second-order-rd}, for smooth functions and at fixed $t \in (0,T)$. By integration and a density argument, the general case is deduced at once.

Next, we  prove \eqref{eq:commutator-diffusion-infinitesimal-rd}, which follows from the fact that $\alpha \int u [\Delta, a : \nabla^2] u^{2\alpha} d\scrL^d$ is infinitesimal, as $\alpha \downarrow 0$: indeed, a standard uniform boundedness and density argument gives that the left hand side in \eqref{eq:commutator-taylor-second-order-rd} is infinitesimal as $\alpha \downarrow 0$. To show it, we initially argue in the  case of smooth functions $u$, $f$, and for fixed $i$, $j\in \cur{1, \ldots, d}$, we let $b = \nabla a^{i,j}$ and integrate by parts
\begin{equation*}\begin{split}
  \int u [\Delta, a^{i,j}\partial^2_{i,j}] f^\alpha d\scrL^d  & = - 2\int (b \cdot \nabla u) \partial^2_{i,j}  f^\alpha d\scrL^d -  \int u (\Delta a^{i,j}) \partial^2_{i,j} f^\alpha d\scrL^d \\
& = - 2 \int (\partial^2_{i,j} f) \sfP^\alpha (b \cdot \nabla u)  d\scrL^d -  \int u (\Delta a^{i,j}) \partial^2_{i,j} f^\alpha d\scrL^d \\
 & = -2 \int (\partial^2_{i,j} f) \cur{ [\sfP^\alpha, b \cdot \nabla  ] u + (b \cdot \nabla u^\alpha)} d\scrL^d -  \int u (\Delta a^{i,j}) \Delta f^\alpha d\scrL^d.
\end{split}\end{equation*}
Although the intermediate steps require some regularity for $u$, by the commutator estimate for Sobolev derivations established in the previous lemma, the resulting identity extends by continuity to $u \in L^\infty_t(L^r_x)$, $f \in L^\infty_t( W^{2,s}_x)$. Next, we specialize to the case $f := u^\alpha$. By the strong convergence provided by Lemma \ref{lemma:commutator-estimate} and uniform boundedness of $\alpha \partial^2_{i,j} u^\alpha$ in $L^\infty_1(L^r_x)$, we  have
\[ \alpha\abs{\int (\partial^2_{i,j} u^\alpha_t) [\sfP^\alpha, b_t\nabla ] u_t  d\scrL^d} \to 0, \quad \text{in $L^1(0,T)$ as $\alpha \downarrow 0$.}\]
Similarly, it holds (recall that the left hand side in \eqref{eq:smoothing-heat-lpdelta} is infinitesimal)
\[ \alpha \abs{ \int u (\Delta a^{i,j}) \Delta  f^\alpha d\scrL^d } \le \nor{\Delta a^{i,j}}_{L^q_x} \nor{u}_{L^r_x} \| \alpha \partial^2_{i,j} u^{2\alpha}\|_{L^s_x} \to 0.\]
Finally, in order to handle the term $\alpha \int (\partial^2_{i,j} u^\alpha) (b \nabla u^\alpha) d\scrL^d$, the choice $f = u^\alpha$ and the symmetry of $a$ are crucial: we integrate by parts once, and since $b = \nabla a^{i,j}$, we obtain
\[
 \int (\partial^2_{i,j} u^\alpha) b\cdot  \nabla u^\alpha d\scrL^d = - \sum_{k=1}^d \int \sqa{ \partial_i u^\alpha (\partial^2_{j,k}a^{i,j}) \partial_k u^\alpha + (\partial_i u^\alpha) (\partial_k a^{i,j}) \partial^2_{k,j}  u^\alpha }d\scrL^d.
\]
The first term, when multiplied by $\alpha$, is clearly bounded and infinitesimal  as $\alpha \downarrow 0$, so we focus on the last one. To show that it is bounded, we recall that $a$ is symmetric and we are actually interested in bounds for the whole sum on $i$, $j\in \cur{1, \ldots d}$; thus, by coupling the symmetric terms, it is sufficient to prove that
\[  \alpha \int \partial_i u^\alpha(\partial_k a^{i,j}) \partial^2_{k,j}  u^\alpha + \partial_j u^\alpha(\partial_k a^{i,j}) \partial^2_{k,i}  u^\alpha d\scrL^d \]
is infinitesimal. This symmetric expression can be explicitly rewritten  as
\[  \frac{\alpha}{2} \int (\partial_k a^{i,j}) \partial_k \sqa{ (\partial_i u^\alpha + \partial_j u^\alpha)^2 -(\partial_i u^\alpha)^2 - (\partial_j u^\alpha)^2} d\scrL^d,\]
and at this stage we integrate by parts once more, obtaining a bound in terms of
\[ \alpha \nor{ \nabla ^2 a}_{L^q_x} \nor{ \nabla u^\alpha}_{L^r}  \nor{ \nabla u^\alpha}_{L^s},\]
which is sufficient to conclude (recall that the left hand side in \eqref{eq:smoothing-heat-lpgamma} is infinitesimal).
\end{proof}


Finally, we deal with the bounded elliptic case: if $a$ is bounded and elliptic, then the form $L^2_t(W^{1,2}_x) \in f \mapsto \int a(\nabla f, \nabla f)$ is Dirichlet form, with associated Markov semigroup $\sfP^\alpha_a$ and (self-adjoint) generator $\sfDelta_a f = \div( a \nabla f)$, on its ``abstract'' domain $D(\sfDelta_a)$ (as given by the general theory of Dirichlet forms). When we choose $\sfP^\alpha_a$ as ``smoothing  operator'', the main difficulty is to prove that it preserves regularity with respect to $t \in (0,T)$, thus we need some estimate for the commutator  $[\sfP^\alpha_a, \partial_t]$, which we initially define in following the weak sense, for $u \in \cA_c$,  $f \in \cA$:
\[ \int_{(0,T)\times \R^d} u [ \sfP^\alpha_a, \partial_t] f d \scrL^{1+d} := \int_{(0,T)\times \R^d} \sqa{ (\sfP^\alpha_a u) \partial_t f + (\partial_t u)  \sfP^\alpha_a f} d \scrL^{1+d}.\]

\begin{lemma}\label{lemma:commutator-time}
Let $a$ be bounded and elliptic, with $\partial_t a \in L^\infty_{t}(L^\infty_x)$. Then, for every $\alpha \in (0,1)$, $u \in \cA_c$, $f \in  \cA$, it holds
\begin{equation}\label{eq:commutator-time-bilinear}\abs{ \int u [\sfP^\alpha_a, \partial_t ] f d\scrL^{1+d} } \le c \nor{\partial_t a}_{L^\infty_t L^\infty_x} \nor{u}_{L^2_tL^2_x} \nor{f}_{L^2_tL^2_x},\end{equation}
where $c$ is a constant (depending only on the ellipticity constant $\lambda$).\end{lemma}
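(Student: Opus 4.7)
I would follow the same Bakry--\'Emery interpolation scheme used in Lemmas~\ref{lemma:commutator-estimate} and~\ref{lem:commutator-diffusion-rd}, only now the ``extra'' operator that does not commute with the smoothing semigroup is $\partial_t$ rather than a diffusion. For each fixed $t \in (0,T)$, $\sfDelta_{a_t} g := \div(a_t \nabla g)$ is self-adjoint on $L^2_x$ and generates the Markov semigroup $(\sfP^s_{a_t})_{s \ge 0}$; the operator $\sfP^\alpha_a$ acts on functions of $(t,x)$ by applying $\sfP^\alpha_{a_t}$ in the spatial variable, and I would work throughout by fixing a smooth $u \in \cA_c$, $f \in \cA$, where the manipulations below are justified, and recovering the general statement afterwards by density.

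\smallskip

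\textbf{Interpolation.} Introduce
\[ \psi(s) := \int_{(0,T)\times \R^d} (\sfP^s_a u)\,\partial_t(\sfP^{\alpha-s}_a f)\, d\scrL^{1+d}, \qquad s \in [0,\alpha]. \]
The boundary values are exactly those appearing in the definition of the weak commutator: integrating by parts in $t$ (which is legitimate since $u \in \cA_c$) one checks
\[ \psi(\alpha) - \psi(0) = \int (\sfP^\alpha_a u)\,\partial_t f + (\partial_t u)\,\sfP^\alpha_a f\, d\scrL^{1+d} = \int u[\sfP^\alpha_a,\partial_t]f\,d\scrL^{1+d}. \]
Differentiating in $s$ and using $\partial_s \sfP^s_a = \sfDelta_{a_t} \sfP^s_a$ at each $t$, together with the self-adjointness of $\sfDelta_{a_t}$ on $L^2_x$, two of the four terms cancel; the remaining contribution comes from the $t$-derivative hitting $a_t$ inside $\sfDelta_{a_t}$. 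Since $\partial_t\bigl(\sfDelta_{a_t}g\bigr) = \sfDelta_{a_t}\partial_t g + \div\bigl((\partial_t a_t)\nabla g\bigr)$, one arrives at
\[ \psi'(s) = \int (\partial_t a_t)\bigl(\nabla \sfP^s_a u,\, \nabla \sfP^{\alpha-s}_a f\bigr)\,d\scrL^{1+d}. \]

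\smallskip

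\textbf{Estimate.} Bound the integrand pointwise in $t$ by $\nor{\partial_t a}_{L^\infty_tL^\infty_x}\,|\nabla \sfP^s_a u(t,\cdot)|\,|\nabla \sfP^{\alpha-s}_a f(t,\cdot)|$ and apply Cauchy--Schwarz in $x$. For the $L^2$-gradient factors, the standard Markovian estimate for a Dirichlet form, obtained from $\tfrac{d}{ds}\nor{\sfP^s_{a_t}g}_{L^2_x}^2 = -2\int a_t(\nabla \sfP^s_{a_t}g,\nabla \sfP^s_{a_t}g)\,dx$ together with the monotonicity of the Dirichlet energy along the semigroup, gives
\[ \int a_t(\nabla \sfP^s_{a_t}g,\nabla \sfP^s_{a_t}g)\,dx \le \frac{1}{2s}\nor{g}_{L^2_x}^2, \]
and ellipticity $a_t(v,v) \ge \lambda |v|^2$ yields $\nor{\nabla \sfP^s_{a_t}g}_{L^2_x} \le (2s\lambda)^{-1/2}\nor{g}_{L^2_x}$. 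Hence
\[ |\psi'(s)| \le \frac{\nor{\partial_t a}_{L^\infty_tL^\infty_x}}{2\lambda\sqrt{s(\alpha-s)}}\,\nor{u}_{L^2_tL^2_x}\nor{f}_{L^2_tL^2_x}, \]
and integrating over $s\in(0,\alpha)$ using $\int_0^\alpha (s(\alpha-s))^{-1/2}ds = \pi$ produces \eqref{eq:commutator-time-bilinear} with $c = \pi/(2\lambda)$.

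\smallskip

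\textbf{Main obstacle.} The conceptual step (interpolation plus self-adjointness) is the same as in the previous two lemmas; the genuine technical nuisance is justifying the differentiation under the integral in $\psi'(s)$ when the generator $\sfDelta_{a_t}$ itself depends on $t$, so that $\sfP^s_a$ does not strictly commute with $\partial_t$. For $a$ with $\partial_t a \in L^\infty_{t,x}$ this is handled by an approximation argument (mollifying $a$ in $t$ to obtain $C^1_t$ coefficients, for which $s \mapsto \sfP^s_{a_t^\eps}$ depends smoothly on all parameters and the standard calculus applies), deriving \eqref{eq:commutator-time-bilinear} with a constant independent of the regularisation, and then passing to the limit using only the $L^\infty$ bound on $\partial_t a$ and the strong continuity of the semigroups on $L^2_x$.
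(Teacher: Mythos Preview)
Your argument is correct and rests on the same Bakry--\'Emery interpolation as the paper's proof, but the two handle the time derivative differently. You keep $\partial_t$ throughout, define $\psi(s) = \int (\sfP^s_a u)\,\partial_t(\sfP^{\alpha-s}_a f)$, and compute $\psi'(s)$ directly; this forces you to make sense of $\partial_t(\sfP^{\alpha-s}_a f)$ \emph{before} the lemma is proved, which is circular (the lemma is precisely what yields $\sfP^\alpha_a : W^{1,2}_t(L^2_x) \to W^{1,2}_t(L^2_x)$). You correctly identify this obstacle and resolve it by mollifying $a$ in $t$, which is legitimate but requires invoking smooth dependence of $\sfP^s_{a_t}$ on the parameter $t$ and then passing to the limit in the final inequality. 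The paper instead replaces $\partial_t$ by the finite difference $\sigma^{-1}(\sfT^\sigma - \mathsf{I})$, interpolates along $\sfP^s_a$ to reduce to $[\sfDelta_a, \sfT^\sigma]$, and then interpolates once more along $r \mapsto \sfT^r a$ on $(0,\sigma)$ to produce $\partial_t a$; only at the very end does one let $\sigma \to 0$. This double interpolation uses only bounded operators (shifts and semigroups) at every stage, so no differentiability of $t \mapsto \sfP^s_{a_t}$ is ever needed and no auxiliary regularisation of $a$ enters. Your route is more direct once the formalities are in place; the paper's is technically cleaner. Both yield the same constant, up to harmless factors.
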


Thanks to this lemma and a density argument, for $f \in W^{1,2}_t(L^2_x)$, we deduce that $\sfP^\alpha_a f \in W^{1,2}_t(L^2_x)$, and the ``strong'' commutator $[\sfP^\alpha_a, \partial_t ] f  := \sfP^\alpha_a \partial_t f -  \partial_t \sfP^\alpha_a f$ is well defined and it belongs to $L^2_{t}(L^2_x)$. Moreover, the usual uniform boundedness arguments shows that, for $u\in L^2_{t}(L^2_x)$ and any family $(f_\alpha)_{\alpha \ge 0} \subseteq  L^2_t(W^{1,2}_x)$ converging in $L^2_{t,x}$, it holds
\[ \int_{(0,T)\times \R^d} u [\sfP^\alpha_a, \partial_t] f_\alpha d \scrL^{1+d} \to 0, \quad \text{as $\alpha \downarrow 0$.}\]

\begin{proof}
We provide the following analogue of \eqref{eq:commutator-time-bilinear}, where $\partial_t$ is replaced by $\sigma^{-1} (\sfT^\sigma - \mathsf{I})$, where $\sfT^\sigma f(t,x) = f(t+\sigma, x)$, and $\mathsf{I}$ is the identity operator (we also choose $\sigma \neq 0$ small enough, to avoid boundary terms, thanks to the assumption $u \in \cA_c$):
\[ \abs{\int_{(0,T)\times \R^d} u [\sfP^\alpha_a, \sigma^{-1} (\sfT^\sigma - \mathsf{I}) ] f d\scrL^{1+d}} \le c \nor{\partial_t a}_{L^\infty_t L^\infty_x} \nor{u}_{L^2_tL^2_x} \nor{f}_{L^2_tL^2_x},\]
(which $c$ depending on $\lambda$ only). Once this is is settled, we may let $\sigma \to 0$ and pass to the limit in the weak formulation. Let us notice that the identity operator plays no role above, and everything reduces to estimate $\sigma^{-1}\int u [\sfP^\alpha_a, \sfT^\sigma ] f d\scrL^{1+d}$. By first-order interpolation along the semigroup $\sfP^s_a$, for $s \in (0,\alpha)$, it is sufficient to bound the infinitesimal commutator
\[  \int u^{s} [\sfDelta_a, \sfT^\sigma ] f^{\alpha-s} d\scrL^{1+d} =  \int \bra{ (\sfT^{\sigma}a) \nabla u^{s} , \nabla \sfT^{\sigma} f^{\alpha-s}) } - \bra{ a\nabla u^{s} , \nabla \sfT^\sigma f^{\alpha-s} } d\scrL^{1+d},\]
where we performed integration by parts with respect to the variable $x \in \R^d$ and the change of variables $t \mapsto t+\sigma$ in the first integral. We have therefore the bound (we are actually interpolating also along the semigroup $\sigma \to \sfT^\sigma$)
\[ \abs{ \int u^{s} [\sfDelta_a, \sfT^\sigma ] f^{\alpha-s} d\scrL^{1+d}} \le \int_0^\sigma \abs{ \partial_r \int (\sfT^{r}a) (\nabla u^{s} , \nabla \sfT^{\sigma} f^{\alpha-s})  d\scrL^{1+d}} dr \]
which by $\partial_r \sfT^{r}a = \sfT^{r} \partial_t a$ gives the thesis, after an application of H\"older inequality and using the smoothing effect in $L^2_t(L^2_x)$ of $\sfP_a$, i.e.\ $\nor{\nabla u^s}_{L^2_t L^2_x} \le (s\lambda)^{-1/2} \nor{u}_{L^2_tL^2_x}$. 
\end{proof}

It would be natural to extend the argument above for more general exponents beyond the case above; the main issue being that a smoothing effect for $\sfP_a$ akin to \eqref{eq:smoothing-heat-lpgamma} is not ensured by Dirichlet form theory, when the exponent involved is different from $2$. It seems plausible however to replace $L^2_t(L^2_x)$ with $L^\infty_t(L^2_x)$ and require only $\partial_t a \in L^1_t(L^\infty_x)$ (as the semigroup acts only fiberwise).

\begin{remark}[trace semigroup at $t=0$]
Another consequence of Sobolev regularity of the lemma above is existence of a ``trace'' semigroup, e.g.\ at $t = 0$, defined as follows: for $f \in L^2_x$, consider a constant extension $f(t,x) = f(x)$  for $(t,x) \in (0,T)\times\R^d$, and let $\sfP^\alpha_0 f$ be the trace of the Sobolev function $\sfP^\alpha_a f$ at $t =0$. Alternatively, this can be obtained as the semigroup generated by the bilinear form given by the trace at $0$ of $a$.
\end{remark}

\subsection{Proof of well-posedness results}\label{sec:proof-well-posedness}

In this section, we address the proof of Theorem \ref{thm:wp-degenerate} and Theorem \ref{thm:wp-elliptic}. As already remarked, existence is easily settled by approximations, so we focus on uniqueness.

\begin{proof}[Proof of Theorem \ref{thm:wp-degenerate}]
Let $u$ be the difference between any two narrowly continuous solutions in $L^\infty_t(L^r_x)$ and let $\sfP^\alpha$ be the heat semigroup on $\R^d$, extended on $(0,T)\times \R^d$ by acting on each fiber $\cur{t} \times\R^d$, $t \in [0,T]$. For $\alpha >0$, $\sfP^\alpha$ maps $W^{1,1}_t(L^{r'}_x) \cap D^{r'}(\cL)$ into itself, as $f^\alpha := \sfP^\alpha f$ is $C^2_b$ with respect to the variable $x \in \R^d$, for $\scrL^1$-a.e.\ $t \in (0,T)$ (to approximate $\sfP^\alpha f$ with functions in $\cA$, we argue by convolution with a smooth kernel with respect to $t \in (0,T)$),  
thus \eqref{eq:weak-formulation-extended} holds true for $f$ in such a space:
\begin{equation}\label{eq:weak-formulation-extended-2} \int_0^T \int \sqa{(\partial_t + \cL _t) f} u^\alpha_t d \scrL^d dt = \int f^\alpha_T u_T d\scrL^d - \int f^\alpha_0 u_0 d\scrL^d + \int_{(0,T)\times \R^d}     u  \sqa{ \sfP^\alpha,  (\partial_t + \cL _t) } f d\scrL^{1+d}.\end{equation}

For $\alpha >0$, we also have $u^\alpha \in D^{r'}(\cL)$: to use $u^\alpha$ as a test function, we deduce that $u^\alpha \in W^{1,1}_t(L^{r'}_x)$, which follows directly from the equation satisfied by $u^\alpha$. Indeed, \eqref{eq:weak-formulation-extended-2} for $f \in \cA_c$ entails that the distributional derivative $\partial_t u^\alpha$ coincides with the distribution $\cL^* u^\alpha$, which is represented by a function, namely
\[ \cL^* u^\alpha = \nabla^* (b u^\alpha) + \frac 1 2 (\nabla^*)^2 (a u^\alpha) = - (\div \cL) u^\alpha + \cL u^\alpha  + 
\frac 1 2 (\nabla^*a) \cdot (\nabla u^\alpha) \in L^1_tL^{r'}_x.\]
Therefore, $\partial_t u^\alpha \in L^1_t(L^{r'}_x)$, and $u^\alpha$ admits an absolutely continuous continuous representative, which must coincide with the one that we would obtain by acting directly to the narrowly continuous representative $u_t$ with the heat semigroup $\sfP^\alpha$, at every $t \in [0,T]$:  it holds in particular $u^\alpha_0 = 0$, since $u_0 = 0$. Moreover, the curve $t \mapsto \int_{\R^d} (u^\alpha_t)^2 d\scrL^d$ is absolutely continuous, with distributional and $\scrL^1$-a.e.\ derivative 
$\frac{d}{dt} \int (u^\alpha)^2_t d\scrL^d = 2 \int (\partial_t u^\alpha) u^\alpha d\scrL^d.$

We are in a position to let $u^\alpha$ in the weak formulation \eqref{eq:weak-formulation-extended-2}, to obtain
\[ \int_0^T \int \sqa{(\partial_t + \cL_t) u^\alpha} u^\alpha_t d \scrL^d dt = \int (u^\alpha_T)^2  d\scrL^d  + \int_{(0,T)\times \R^d}   u  \sqa{ \sfP^\alpha,  \cL _t} u^\alpha d\scrL^{1+d}.\]
If we choose instead a test function $t\mapsto f(t) u^\alpha_t$, with $f \in C^1_c[0,T)$ and we apply \eqref{eq:ibp-diffusion}, we eventually deduce the inequality
\[ \frac{d}{dt} \int (u^\alpha)^2_t d\scrL^d \le \nor{\div \cL_t^-}_{L^\infty_x} \int_{\R^d} (u_t^\alpha)^{2} d\scrL^d + \int_{\R^d}  u_t \sqa{ \sfP^\alpha,  (\partial_t + \cL _t)} u^\alpha_t d \scrL^{d}, 
\]
$\scrL^1$-a.e.\ $t \in (0,T)$ and in the sense of distributions on $(0,T)$. Gronwall lemma gives
\[ \nor{ u^\alpha }_{L^\infty_t L^2_x}^2 \le \exp\cur{ \nor{ \div \cL^-}_{L^1_tL^\infty_x}} \int_0^T \abs{ \int_{\R^d}  u_t \sqa{ \sfP^\alpha,  (\partial_t + \cL _t) } u^\alpha_t  d \scrL^{d} } dt .\]
As a consequence of Lemma \ref{lem:commutator-diffusion-rd}, we deduce $\nor{ u }_{L^\infty_t L^2_x} \le \liminf_{\alpha \downarrow 0 }  \nor{ u^\alpha }_{L^\infty_t L^2_x}  = 0$.\end{proof}

\begin{proof}[Proof of Theorem \ref{thm:wp-elliptic}]
In our smoothing scheme, we choose $\sfP^\alpha = \sfP_a^\alpha$ be the semigroup associated to the Dirichlet form $f \mapsto \int a(\nabla f, \nabla f) d\scrL^{1+d}$, as introduced in the previous section. A first step consists in showing that \eqref{eq:weak-formulation-extended-2} holds true, and we see it as a consequence of the fact that $\sfP^\alpha_a$ maps $W^{1,2}_t(L^2_x) \cap D^2(\cL)$ into itself: if $f \in W^{1,2}_t(L^2_x)$, then Lemma \ref{lemma:commutator-time} shows that $f^\alpha \in W^{1,2}_t(L^2_x)$ as well; to show $f^\alpha \in D^2(\cL)$, we rely on the assumption on $a \in L^1_t(W^{1,p}_x)$, and show that the smooth approximations obtained by means of the standard heat semigroup $\sfP^s( f^\alpha )$ converge towards $f^\alpha$ in $D^2(\cL)$, i.e.\ $\cL \sfP^s ( f^\alpha ) \to \cL ( f^\alpha )$ in $L^1_t(L^2_x)$ (this is the only point where we use the first order regularity assumption on $a$). Such convergence can be seen by the commutator lemma for Sobolev vector fields, Lemma \ref{lemma:commutator-estimate}, noticing that the claim convergence amounts to show
\[ [ \cL , \sfP^s ] f^\alpha \to 0 \text{ in $L^1_t(L^2_x)$,}\]
but since derivatives and the standard heat semigroup commute, it holds
\[ [ \cL , \sfP^s ] f^\alpha = \sum_{i,j=1}^d [ a_{i,j} \partial_i , \sfP^s] \partial_j f^\alpha +  \sum_{i=1}^d [ b_i, P^s] \partial_i f^\alpha\to 0\]
since $\partial_j f^\alpha \in L^\infty_tL^2_x$ and Lemma \ref{lemma:commutator-estimate} shows convergence towards $0$ in $L^1_tL^2_x$, as $s \downarrow 0$.

As a second step, we notice that we may let $u^\alpha$ be a test function in \eqref{eq:weak-formulation-extended-2} indeed, it holds $u^\alpha \in H^{1,2}(\cL, a(\nabla \otimes \nabla))$ by what we just proved, while the fact that $\partial_t u^\alpha$ is represented by some function in $L^1_tL^2_x$ follows from a duality argument: for a.e.\ $t \in (0,T)$ the linear functional $f \mapsto \int_{\R^d} u_t \cL_t f^\alpha$ is bounded in $L^2_x$. From \eqref{eq:weak-formulation-extended-2}, we have
\[ \partial_t \int (u^\alpha)^2_t d\scrL^d + 2 \lambda \int \abs{ \nabla u^\alpha }^2 d\scrL^d \le \int \sqa{ (u_t^\alpha)^{2}  (\nabla^*)^2 a_t + u_t b_t\nabla u^\alpha_t + u_t \sqa{ \sfP^\alpha_a,  \partial_t }u^\alpha_t }d \scrL^{d},\]
where we applied \eqref{eq:ibp-diffusion} only for the diffusion part $a: \nabla^2$, as we deal with the drift term separately, using the inequality \[ \abs{u_t b_t \cdot \nabla u^\alpha} \le  \lambda \abs{\nabla u^\alpha}^2 + {4\lambda}^{-1} \abs{ u_t}^2 \abs{b_t}^2, \]
to bound the contribution of the drift part. To conclude, we apply Gronwall inequality and finally let $\alpha \downarrow 0$, using \eqref{eq:commutator-diffusion-infinitesimal-rd} to deduce that the commutator term gives no contribution in the limit and uniqueness holds.
\end{proof}

\appendix
\section{The superposition principle for multidimensional diffusions}\label{chap:superposition-rd}


To prove Theorem \ref{thm:sp}, we follow a general scheme, whose structure is shared by many proofs of superposition principles appearing in the literature, see e.g.\ \cite[Theorem 8.2.1]{ambrosio-gigli-savare-book}, \cite[Theorem 12]{ambrosio-crippa-lecture-notes}, \cite[Theorem 4.5]{ambrosio-figalli-wiener}, \cite[Theorem 2.6]{figalli-sdes}, \cite[Theorem 7.1]{ambrosio-trevisan}, that we summarize below. The derivation is rather elementary, although the ``right'' underlying framework would that of Young (or random) measures. For simplicity, we let $T=1$ in this section. Let $\nu = (\nu_t)_{t \in [0,1]} \subseteq \scrP( \R^d)$  be a narrowly continuous weak solution of the FPE \eqref{eq:fpe-rn}. To deduce existence of a superposition solution for $\nu$, we perform the following steps.

\vspace{.5em}

\noindent {\bf Step 1} (approximation){\bf.}\ We build from $\nu$ a sequence of solutions $(\nu^n)_n$ of FPE's associated to diffusion operators $(\cL^n)_n$, for which the superposition principle is already known to hold, thus obtaining a sequence of superposition solutions $(\eeta^n)_n$ of MP's. Here, the difficulty is to exhibit a sufficiently good approximation, so that $\nu^n$ converge towards $\nu$, e.g., narrowly, and $\cL^n$ towards $\cL$, in a sense to be made precise, as $n \to \infty$.

\vspace{.5em}

\noindent {\bf Step 2} (tightness){\bf.}\ We prove that $(\eeta^n)_n \subseteq \scrP ( C([0,1]; \R^d))$ is tight, yielding a narrow limit point $\eeta$. By Ascoli-Arzel\`a criterion, this step reduces to show uniform bounds on the modulus of continuity of the canonical process $(e_t)_{t \in [0,1]}$ with respect to $\eeta^n$.

\vspace{.5em}

\noindent {\bf Step 3} (limit){\bf.}\ From convergence $\nu^n \to \nu$, $\cL^n \to \cL$, as $n \to \infty$, we conclude that $\eeta$ is a superposition solution for $\nu$. Here, the problem is to deal with convergence for possibly non-continuous functions, as they involve the coefficients $a$, $b$.




\subsection{Approximation}
\label{sec:approximation}

We  approximate the limit solution by means of mollification by convolutions or push-forwards via smooth maps (in probabilistic jargon, by conditioning with respect to some observables). 




{\bf Push forward via smooth maps.} This technique is inspired by the approach in \cite[Theorem 7.1]{ambrosio-trevisan}. Let $\pi \in C^2(\R^d; \R^d)$ $\pi = (\pi^1, \ldots, \pi^d)$, with uniformly bounded first and second derivatives. Then, it is possible to define a diffusion operator $\pi(\cL)$ on $\R^d$ such that $\pi_\sharp \nu := (\pi_\sharp \nu_t)_{t \in [0,1]}$ is a solution to the associated FPE (in duality with $\cA = C^{1,2}_b((0,T)\times \R^d)$). Indeed, the composition $f\circ \pi(t,x)  := f(t, \pi(x))$ belongs to $\cA$, and if we let $f \circ \pi$ in the weak formulation \eqref{eq:weak-fpe-rn}, the chain rule gives
\[ \cL (f \circ \pi) = \sum_{i=1}^d \cL(\pi^i) \sqa{ (\partial_i f) \circ \pi }+ \frac 1 2 \sum_{i,j=1}^d a(\nabla \pi^i, \nabla \pi^j) \sqa{ (\partial_{i,j} f) \circ \pi}.\]
We define, for $(t,x) \in (0,T) \times \R^d$,
\[ \pi(a)^{i,j}_t (x) :=  \E_{\nu_t}\sqa{ a(\nabla \pi^i, \nabla \pi^j) \, | \, \pi = x} = \frac{d \pi_\sharp\sqa{ a(\nabla \pi^i, \nabla \pi^j) \nu_t}}{ d \pi_\sharp \nu_t}(x), \quad \text{for $i,j \in \cur{1, \ldots, d}$,}\]
\[  \pi(b)^{i}_t (x) :=  \E_{\nu_t}\sqa{ \cL (\pi^i)\,  |\,  \pi = x} = \frac{d \pi_\sharp\sqa{ \cL(\pi^i)\nu_t}}{ d \pi_\sharp \nu_t}(x), \quad \text{for $i \in \cur{1, \ldots, d}$.}\]
Then, $\pi(\cL) := \cL(\pi(a), \pi(b))$ is a diffusion operator on $\R^d$ and the (narrowly continuous) curve of measures $\pi_\sharp \nu$ is a weak solution of the FPE $\partial_t \pi(\nu) = \pi(\cL)^* \pi(\nu)$, in $(0,T)\times \R^d$. Let us remark that $\pi(\cL)$ depends upon $\nu$, although it is not evident in the notation.

Since conditional expectations reduce norms and the derivatives of $\pi^i$ are uniformly bounded, integral bounds on $a$, $b$ are naturally transferred on $\pi(a)$, $\pi(b)$ (in particular, uniform bounds). However, local integrability conditions could be not preserved.

{\bf Mollification by convolutions.} This is a more standard technique, already employed e.g.\ in \cite[Theorem 8.2.1]{ambrosio-gigli-savare-book} and \cite[Theorem 2.6]{figalli-sdes}. Let $\rho \ge 0$ be a smooth probability density (with respect to$\mathscr{L}^d$), with full support. Then, the family of measures $\nu * \rho := (\nu_t * \rho)_{t \in [0,1]}$, solve a FPE associated to a suitably defined diffusion operator. Indeed, for $f \in \cA$, it holds $f * \rho \in \cA$ with
\[ \cL (f * \rho) = \sum_{i=1}^d b^i \partial_i (f * \rho) + \frac 1 2 \sum_{i,j=1}^d a^{i,j} \partial_{i,j} (f * \rho)  
=\sum_{i=1}^d b^i (\partial_i f) * \rho + \frac 1 2  \sum_{i,j=1}^d a^{i,j} (\partial_{i,j} f) * \rho,\]
since derivatives and convolution commute. We define
\[ (a^\rho)_t^{i,j} :=  \frac{ d(a^{i,j} \nu_t) * \rho}{ d( \nu_t * \rho)}, \quad (b^\rho)^{i}_t := \frac{  d(b^i \nu_t) * \rho}{ d (\nu_t * \rho)}, \quad \text{for $i,j \in \cur{1,\ldots, d}$.}\]
so $(\nu_t * \rho)_{t \in [0,1]}$ is a weak solution of the FPE associated to $\cL^\rho := \cL(a^\rho, b^\rho)$, as
\[ \partial_t \int  f  d (\nu * \rho) =   \int (\partial_t f) * \rho \, d \nu = \int \partial_t (f*\rho) \, d\nu  
= -\int  \cL (f*\rho) \,  d\nu  = -\int \cL^\rho f \, d (\nu * \rho).
\]

Integrability and regularity properties of $a^\rho$ and $b^\rho$ are collected by the following lemma, see \cite[Lemma 8.1.10]{ambrosio-gigli-savare-book} for a detailed proof.

\begin{lemma}\label{lemma:approximation}
Let $\rho$ be a smooth probability kernel on $\R^d$ with $\rho > 0$  everywhere and $\abs{\nabla^i \rho} \le C \rho$, for $i \in \cur{1,\ldots k}$,  for some constant $C \ge 0$. Let $\mu$, $\nu \in \scrM^+(\R^d)$, with $\mu \ll \nu$.
Then, it holds $\mu * \rho \ll \nu *\rho$, and the function
\[
 \frac{ d (\mu * \rho)}{ d (\nu * \rho)}(x) = \frac{ \int \rho(x-y) \, d\mu(y) }{\int \rho(x-y) \, d\nu(y) }, \quad \text{for $x \in \R^d$}
\]
provides a $C^k(\R^d)$ version of the density $d(\mu * \rho)/ d(\nu *\rho)$. Moreover, for every convex, lower semicontinuous function $\Theta: \R \to [0,\infty]$, it holds
\begin{equation}\label{eq:jensen} \int \Theta\bra{ \frac{ d(\mu * \rho)}{ d(\nu * \rho)} } d(\nu *\rho) \le \int \Theta\bra{ \frac{ d\mu}{ d\nu} } d\nu.\end{equation}

Similar conclusions hold when $\mu = (\mu_t)_{t \in [0,1]} \subseteq \scrM^+(\R^d)$ is a Borel curve and $\nu = (\nu_t)_{t\in [0,1]} \subseteq \scrP(\R^d)$ is narrowly continuous, with $\mu_t \ll \nu_t$ for every $t \in [0,1]$. In addition, it holds
\[ \sup_{t \in [0,1]} \nor{ \frac{d (\mu_t * \rho)}{ d (\nu_t * \rho)}}_{C^k_b(B)} < \infty, \]
for every open bounded set $B \subseteq \R^d$.
\end{lemma}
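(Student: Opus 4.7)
The plan is to compute both convolutions explicitly as Lebesgue-absolutely continuous measures, then reinterpret the resulting density as a conditional expectation so that \eqref{eq:jensen} reduces to the scalar Jensen inequality applied pointwise in $x$ together with Fubini; the curve version follows by applying the pointwise argument fiberwise in $t$, with a tightness argument producing the uniform lower bound needed for the $C^k_b(B)$ estimate.

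First, since $\rho$ is smooth and strictly positive, both $\mu*\rho$ and $\nu*\rho$ are absolutely continuous with respect to $\scrL^d$, with densities
\[
(\mu*\rho)(x)=\int\rho(x-y)\,d\mu(y),\qquad (\nu*\rho)(x)=\int\rho(x-y)\,d\nu(y),
\]
and their quotient gives the Radon--Nikodym derivative (the denominator never vanishes since $\rho>0$; the case $\nu\equiv 0$ is vacuous by $\mu\ll\nu$). The $C^k$ regularity of the two densities comes from differentiation under the integral sign, legitimated by the domination $|\nabla^i\rho|\le C\rho$, and the quotient inherits $C^k$ regularity since the denominator is continuous and strictly positive.

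For \eqref{eq:jensen}, write $f:=d\mu/d\nu$ and, for each $x\in\R^d$, define the probability measure
\[
\nu^x(dy)\;:=\;\frac{\rho(x-y)}{(\nu*\rho)(x)}\,\nu(dy),
\]
so that a direct computation yields
\[
\frac{d(\mu*\rho)}{d(\nu*\rho)}(x)\;=\;\int f(y)\,d\nu^x(y).
\]
The scalar Jensen inequality applied to the convex, l.s.c.\ function $\Theta$ then gives, pointwise in $x$,
\[
\Theta\!\left(\tfrac{d(\mu*\rho)}{d(\nu*\rho)}(x)\right)(\nu*\rho)(x)\;\le\;\int\Theta(f(y))\,\rho(x-y)\,d\nu(y),
\]
and integrating in $x$, applying Fubini--Tonelli (all integrands are nonnegative), and using the normalization $\int\rho(x-y)\,dx=1$ for every $y\in\R^d$ delivers \eqref{eq:jensen}.

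For the curve version the same argument applies pointwise in $t\in[0,1]$; Borel measurability in $t$ of the quotient is inherited from that of $(\mu_t)$ and $(\nu_t)$. The uniform $C^k_b(B)$-bound over a bounded open set $B$ reduces to a uniform lower bound $\inf_{t\in[0,1],\,x\in B}(\nu_t*\rho)(x)>0$, together with uniform upper bounds on the numerator $(\mu_t*\rho)$ and its first $k$ derivatives. The former I would extract from narrow continuity of $t\mapsto\nu_t$ on the compact interval $[0,1]$: Prokhorov's theorem yields tightness, hence a compact $K\subseteq\R^d$ with $\inf_t\nu_t(K)\ge 1/2$; since the set $\{x-y:x\in B,\,y\in K\}$ is bounded and $\rho$ is continuous and strictly positive, $\inf_{x\in B,\,y\in K}\rho(x-y)>0$, which provides the required lower bound. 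The upper bound on the numerator and its derivatives, via $|\nabla^i\rho|\le C\rho$, requires a uniform bound on $\mu_t(\R^d)$, which is automatic in the intended applications. The main conceptual ingredient is the Jensen manipulation above (identifying the density as an average against $\nu^x$); the only genuinely delicate point is the uniform lower bound on $(\nu_t*\rho)$ in the curve case, which rests on tightness of $\{\nu_t\}_{t\in[0,1]}$ afforded by narrow continuity over a compact time interval.
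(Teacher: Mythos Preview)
Your argument is correct and is essentially the standard one: the paper does not supply its own proof but refers to \cite[Lemma 8.1.10]{ambrosio-gigli-savare-book}, and the method there is exactly your identification of the ratio $d(\mu*\rho)/d(\nu*\rho)(x)$ as the barycenter $\int f\,d\nu^x$ of $f=d\mu/d\nu$ against the probability $\nu^x$, followed by pointwise Jensen and Fubini--Tonelli. The tightness argument you give for the uniform lower bound on $(\nu_t*\rho)$ over $[0,1]\times B$ is also the natural one.

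The one caveat you raise yourself is genuine: the uniform $C^k_b(B)$ bound in the curve case does require a uniform-in-$t$ control on $(\mu_t*\rho)$ over $B$, which in turn needs $\sup_t \mu_t(\R^d)<\infty$ (together with $\rho$ bounded, as is the case for the kernel $\rho=a\exp(-\sqrt{1+|x|^2})$ actually employed in the paper). The lemma as stated does not make this hypothesis explicit, but it is satisfied in all the applications in Section~\ref{chap:superposition-rd}, so your remark that it is ``automatic in the intended applications'' is accurate rather than a gap in your reasoning.
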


When applied to a  solution $\nu = (\nu_t)_{t \in [0,1]}$ of the FPE \eqref{eq:fpe-rn} we deduce that, if $a$, $b \in L^p(\nu)$, then $a^\rho$, $b^\rho \in L^p( \nu * \rho)$ (for $p \in [1,\infty]$) and $a^\rho_t$, $b^\rho_t$ are $C^k(\R^d)$, uniformly in $t \in [0,1]$, with uniformly bounded first and second (spatial) derivatives on compact sets of $[0,T] \times \R^d$.

\subsection{Tightness}\label{sec:tightness}

We prove a compactness criterion for solutions of martingale problems, under minimal integrability conditions on the coefficients. 
In the deterministic case, tightness is achieved by estimating the metric velocity of absolutely continuous curves which solve the ODE; in the stochastic case, we rely on analogous results for martingales, using Burkholder-Davis-Gundy inequalities and an argument reminiscent of L\'evy's modulus of continuity for the Brownian motion, to estimate the modulus of continuity of the canonical process (yielding in some cases H\"older regularity).


\begin{theorem}\label{theorem:basic-burkholder}
Let $\theta$, $\Theta_1$, $\Theta_2: [0,+\infty) \to [0,+\infty)$ be functions with $\Theta_1$, $\Theta_2$ convex, l.s.c., \[ \lim_{x \to \infty} \theta(x) = \lim_{x\to +\infty} \frac{\Theta_1(x)}{x} =  \lim_{x\to +\infty} \frac{\Theta_2(x)}{x} = \infty\]
and, for some constant $C \ge 0$, $\Theta_2(2x) \le C \Theta_2(x)$, for $x \ge 0$. Then, there exists some coercive function $\Psi : C([0,1]; \R) \to [0,+\infty]$ such that, for every filtered probability space $(\Omega, (\cF_t)_{t \in [0,1]}, \P)$ and progressively measurable processes $\vphi = (\vphi_t)$,  $\beta = (\beta_t)_t$, $\alpha = (\alpha_t)_t$ with 
\[ [0,1] \ni t \mapsto M_t := \vphi_t - \int_0^t \beta_s \, ds, \quad \text{and} \quad [0,1] \ni t \mapsto M_t^2 - \int_0^t \alpha_s\,  ds\]
$\P$-a.s.\ continuous local martingales, and $\alpha \ge 0$ $\P$-a.s.\ it holds
\begin{equation}\label{eq:coercivity-refined}  \E\sqa{ \Psi( \vphi ) } \le \E\sqa{ \theta(\vphi_0)  + \int_0^1 \sqa{ \Theta_1 \bra{\abs{\beta_t}} + \Theta_2 \bra{\alpha_t}} dt}.\end{equation}
\end{theorem}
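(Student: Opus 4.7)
The plan is to build $\Psi$ by combining three ingredients: a coercive term in $\gamma_0$, a modulus-of-continuity term tuned to $\Theta_1$ (drift), and a modulus-of-continuity term tuned to $\Theta_2$ (martingale). Throughout, set $D_t := \int_0^t \beta_s\,ds$, so $\varphi_t = \varphi_0 + M_t + D_t$ with $\langle M \rangle_t = A_t := \int_0^t \alpha_s\,ds$. Write $\omega_\gamma(\delta) := \sup_{|t-s|\le \delta}|\gamma_t-\gamma_s|$. By the Arzel\`a--Ascoli criterion in $C([0,1];\R)$, any functional of the form
$$\Psi(\gamma) = \theta(|\gamma_0|) + F(\gamma)$$
is coercive provided $\theta$ is coercive and the sublevels $\{F \le R\}$ impose a uniform modulus of continuity, i.e.\ $\sup_{F(\gamma)\le R} \omega_\gamma(\delta) \to 0$ as $\delta\downarrow 0$. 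So the task reduces to constructing $F$.

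For the drift part, I would use Jensen applied to the convex $\Theta_1$: for any measurable $E \subset [0,1]$ with $|E|=\delta$,
$$\int_E |\beta_s|\,ds \;\le\; \delta\,\Theta_1^{-1}\!\Bigl(\tfrac{1}{\delta}\textstyle\int_E \Theta_1(|\beta_s|)\,ds\Bigr),$$
so $\omega_D(\delta)\le \delta\,\Theta_1^{-1}(K_1/\delta)$ with $K_1 := \int_0^1 \Theta_1(|\beta_s|)\,ds$. Superlinearity of $\Theta_1$ forces $\delta\,\Theta_1^{-1}(K_1/\delta)\to 0$ as $\delta\to 0$, uniformly when $K_1$ is bounded. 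Thus any functional $F_\beta$ of $\gamma$ satisfying $F_\beta(D)\le K_1$ (to be defined via a duality formula against $\Theta_1^*$ of suitably normalised finite differences of $\gamma$) together with coercivity of $F_\beta$ will capture the drift part.

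For the martingale part, I would combine BDG with the doubling of $\Theta_2$. BDG applied to the stopped martingale gives $\mathbb{E}[\sup_{u\in[s,t]}|M_u - M_s|^2] \le c\,\mathbb{E}[A_t-A_s]$. A \emph{higher} moment is needed to get $o(\delta)$-control on $\mathbb{E}[\omega_M(\delta)^p]$ (necessary for summability of dyadic bounds in $F$); this is exactly where the $\Delta_2$/doubling hypothesis on $\Theta_2$ is used: doubling yields the upper bound $\Theta_2(x)\le c\,x^q$ for $x\ge 1$ with $q := \log_2 C$, so by Young's inequality $\int \alpha^{q'}\,dt$ is controlled by $\int \Theta_2(\alpha)\,dt$ plus a constant, for a suitable $q' > 1$. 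Iterating $L^{p}$-BDG on dyadic intervals and summing via Doob's maximal inequality then yields, for some $p > 2$ and $\eta > 0$ depending on $q$,
$$\mathbb{E}\bigl[\omega_M(2^{-n})^p\bigr] \;\le\; c\,2^{-n(1+\eta)}\Bigl(\mathbb{E}\!\int_0^1\Theta_2(\alpha_s)\,ds + 1\Bigr),\qquad n\ge 1.$$

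Finally, I would assemble
$$\Psi(\gamma) \;:=\; \theta(|\gamma_0|) \;+\; F_\beta(\gamma) \;+\; \sum_{n\ge 1} 2^{n\eta/2}\,\omega_\gamma(2^{-n})^{p}.$$
Coercivity: the second and third terms together force $\omega_\gamma(2^{-n}) \to 0$ with a quantitative rate, so any sublevel is equicontinuous, hence relatively compact (together with the $\theta$-bound at $t=0$). The expectation bound follows from the Jensen estimate for $F_\beta(D)$, the BDG estimate above for the dyadic martingale terms, and the trivial bound $\omega_\varphi(\delta)\le \omega_M(\delta)+\omega_D(\delta)$ combined with convexity and the doubling of $\Theta_2$ (which lets one absorb the factor $2^p$ from the triangle inequality without changing the constant structure). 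The main obstacle I anticipate is balancing the dyadic weights $2^{n\eta/2}$ and the exponent $p$ against the precise growth exponent $q$ produced by the doubling of $\Theta_2$; the absence of a doubling assumption on $\Theta_1$ is not a difficulty because only the integral $K_1$, not moments of $\beta$, enters the drift bound.
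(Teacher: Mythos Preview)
Your overall strategy --- decompose $\varphi$ into drift plus martingale, control the drift via Jensen and the martingale via BDG, and assemble a coercive functional from dyadic oscillation terms --- is the right one, and it is essentially what the paper does. But two steps in your execution do not go through.

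\textbf{The martingale part.} You claim that the doubling condition $\Theta_2(2x)\le C\Theta_2(x)$ yields $\Theta_2(x)\le c\,x^q$ and then that, by Young's inequality, $\int \alpha^{q'}\,dt$ is controlled by $\int \Theta_2(\alpha)\,dt$ for some $q'>1$. The first implication is an \emph{upper} bound on $\Theta_2$; the second would require a \emph{lower} bound $\Theta_2(x)\gtrsim x^{q'}$. No such lower bound follows from superlinearity plus doubling: take $\Theta_2(x)=x\log(e+x)$, which satisfies all the hypotheses but dominates no power $x^{q'}$ with $q'>1$. Hence you cannot extract a fixed $L^{p}$-moment of $\alpha$ from $\int\Theta_2(\alpha)\,dt$, and your dyadic estimate $\mathbb{E}[\omega_M(2^{-n})^p]\lesssim 2^{-n(1+\eta)}$ is not available in general. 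The paper's fix is to use BDG directly in its ``moderate function'' form (Lenglart--L\'epingle--Pratelli): for a convex $\Theta_2$ with doubling one has $\mathbb{E}\bigl[\Theta_2\bigl((M^*_k)^2/\delta\bigr)\bigr]\le c_{\Theta_2}\,\mathbb{E}\bigl[\Theta_2\bigl(\delta^{-1}\!\int_{(k-1)\delta}^{k\delta}\alpha_s\,ds\bigr)\bigr]$, after which Markov and Jensen suffice. Doubling is used exactly here, to make $x\mapsto\Theta_2(x^2)$ moderate, not to pass through a power.

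\textbf{The assembly.} Your single functional $\sum_n 2^{n\eta/2}\omega_\gamma(2^{-n})^p$ is applied to $\gamma=\varphi$, so via $\omega_\varphi\le\omega_M+\omega_D$ the drift term contributes $\omega_D(2^{-n})^p$. But with $\Theta_1(x)=x\log(e+x)$ one has $\omega_D(\delta)\lesssim K_1/\log(K_1/\delta)$, which decays only logarithmically and is not summable against $2^{n\eta/2}$. Your $F_\beta$ term does not help, since as written it is evaluated at $\gamma=\varphi$, not at $D$. The paper resolves this by defining $\Psi(\gamma)=\theta(|\gamma_0|)+\inf_{\gamma^1+\gamma^2=\gamma}\{\Psi_1(\gamma^1)+\Psi_2(\gamma^2)\}$, with $\Psi_i$ built from indicators $\chi_{A_i(2^{-m})^c}$ of modulus-of-continuity violations, the thresholds $\delta_{i,\varepsilon}$ being tuned separately to $\Theta_1$ and $\Theta_2$. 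The infimum lets one plug in the natural decomposition $\gamma^1=D$, $\gamma^2=M$, so each $\Psi_i$ sees only the piece it is designed for.
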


\begin{proof}
We prove separately the existence of functions $\Psi_1$ and $\Psi_2$, depending respectively on $\Theta_1$, $\Theta_2$ only, taking integer and non-negative values, such that, if we let
\begin{equation} \label{eq:psi-definition}\Psi(\gamma) := \theta(\abs{\gamma_0}) + \inf_{\gamma^1 + \gamma^2 = \gamma} \cur{ \Psi_1(\gamma^1) + \Psi_2(\gamma^2)},\end{equation}
then $\Psi$ is coercive and \eqref{eq:coercivity-refined} holds. For every $\veps >0$, $i \in \cur{1,2}$ we let $\delta_{i} = \delta_{i,\veps}$ be the largest number in the form $1/n$, (with $n\ge 1$ natural) such that $\Theta_i(\veps^{i}/ \delta) \delta \ge \veps ^{-1}$: such a choice is possible because $\lim_{x \to +\infty} \Theta_i(x)/x = \infty$. 
Then, we introduce the closed sets
\begin{equation}\label{eq:set-modulus-contuniuity} A_i(\veps) :=  \Big\{ \gamma \in C([0,1], \R) \, : \, \sup_{k =1, \ldots, \delta^{-1}_i} \, \sup_{s \in \sqa{(k-1) \delta_i, k\delta_i}}  |\gamma_s - \gamma_{(k-1) \delta_i} | \le \veps \Big\},\end{equation}
and we let $\Psi_{i} (\gamma) := \sum_{m\ge 0} (m +1) \chi_{ A_i(2^{-m})^c}$. By construction, $\Psi(\gamma) \le m$ entails $\gamma \in A_i(2^{-k})$, for every $k \ge m$.

To show that $\Psi$ defined by \eqref{eq:psi-definition} is coercive, it is sufficient to apply Ascoli-Arzel\`a criterion, noticing that $\gamma \in \cur{\Psi \le m}$ can be decomposed as the sum of two curves $\gamma_1+\gamma_2$, and $\gamma_i$ ($i\in \cur{1,2}$) admits the following modulus of continuity
\begin{equation}\label{eq:modulus-continuity} \omega_{i,m}(x) := \left\{ \begin{array}{ll}
         2^{1-k} & \text{if $x \in [\delta_{i,2^{-(k+1)}}, \delta_{i,2^{-k}})$ with $k \ge m$,}\\
         2^{1-m}/ \delta_{i,2^{-m}} & \text{if $x \in [\delta_{i,2^{-m}}, +\infty)$}.\end{array} \right. \end{equation}
             
To show that \eqref{eq:coercivity-refined} holds, we assume that the right hand side therein is finite. 
 The assumptions entail therefore that $(M_t)_t$ is a $\P$-a.s.\ continuous local martingale, whose quadratic variation process is $t \mapsto \int_0^{t} \alpha_s ds$. If we let $\gamma^1_t := \int_0^t \beta_s ds$ and $\gamma^2_t = M_t$, for $t \in [0,1]$, then the left hand side in \eqref{eq:coercivity-refined} is smaller than
\[ \E\sqa{ \theta(\vphi_0) + \Psi_1(\gamma^1) + \Psi_2( \gamma^2  ) } \le \E\sqa{ \theta(\vphi_0)} + \sum_{m \ge 0} (m+1) \E\sqa{\chi_{ A_1(2^{-m})}  \circ \gamma^1 + \chi_{ A_2(2^{-m})}  \circ \gamma^2}.  \]
Next, we focus on the addends in the series above, writing for brevity $\veps$ in place of $2^{-m}$. For $i \in \cur{1,2}$, using \eqref{eq:set-modulus-contuniuity}, we have
\begin{equation*}\E\sqa{\chi_{ A_i(\veps)^c}  \circ \gamma^i}  = P\Big( \sup_{k =1, \ldots, \delta^{-1}_i} \, (\gamma^i)^*_k > \veps \Big) \le \sum_{k=1}^{\delta^{-1}_i} P\bra{ (\gamma^i)^*_k> \veps},\end{equation*}
where we write, $(\gamma^i)^*_k := \sup_{s \in \sqa{(k-1) \delta_i, k\delta_i}} |\gamma_s^i - \gamma_{(k-1) \delta_i}^i |$.

Let us focus on the case $i=1$ (thus we write $\delta = \delta_1$, $\Theta = \Theta_1$). Since $\abs{\gamma_s - \gamma_t} \le \int_s^t \abs{\beta_r} dr$, we estimate 
\[
 P\bra{ (\gamma^1)^*_k> \veps} \le \frac{ \E\sqa{ \Theta\bra{ \frac{1}{\delta} \int_{(k-1)\delta}^{k\delta} \abs{\beta_s} ds } } }{ \Theta\bra{ \veps / \delta }}
 \le \veps  \E\sqa{ \int_{(k-1)\delta}^{k\delta}  \Theta\bra{\abs{\beta_s}} ds  },
\]
where the last inequality is a consequence of Jensen's inequality and our preliminary choice for $\delta$. Summing upon $k \in \cur{1, \dots, \delta^{-1}}$, we conclude that $\E\sqa{\Psi \circ \gamma} \le c \int_0^1 \E\sqa{\Theta\bra{\abs{\beta_s}}}ds$, for some constant $c \ge 0$ (in this case, the constant does not even depend upon $\Theta$).

To deal with the case $i=2$ (again, we omit to specify $i$ in what follows), i.e., the martingale part, for each $k \in \cur{1, \dots, \delta^{-1}}$, we estimate from above,
\[  P\bra{ M^*_k> \veps}\le \frac{ \E\sqa{ \Theta\bra{ (M^*_k)^2 / \delta } } }{ \Theta\bra{ \veps^2 / \delta }} \le c_{\Theta} \frac{ \E\sqa{ \Theta\bra{ \frac{1}{\delta}\int_{(k-1)\delta}^{k\delta} \alpha_s ds } } }{ \Theta\bra{ \veps^2 / \delta }},\]
where $c_\Theta$ is some constant depending on $\Theta$ only: indeed, it is sufficient to apply Burkh\"older-Davis-Gundy inequalities, e.g.\ in the form \cite[Theorem 2.1]{lenglart-lepingle-pratelli}, to the martingale $M_s := \delta^{-1/2} M_{s+(k-1)\delta}$, $s\in [0,\delta]$ and the convex function with ``moderate growth'' $x \mapsto \Theta(x^2)$. By Jensen's inequality and our definition of $\delta_\veps$ we conclude that
\[ \frac{  \E\sqa{ \Theta\bra{ \frac{1}{\delta}\int_{(k-1)\delta}^{k\delta} \alpha_s ds } }}{\Theta\bra{ \veps^2 / \delta } } \le \veps \E\sqa{ \int_{(k-1)\delta}^{k\delta} \Theta \bra{\alpha_s} ds}.\]
As in the previous case, by summing upon $k \in \cur{0, \ldots, \delta^{-1}}$, we deduce that
\[ \E\sqa{\chi_{ A(\veps)^c}  \circ M} \le  \veps c_\Theta \E\sqa{ \int_0^1 \Theta \bra{\alpha_s} ds}\]
and so we deduce the desired bound for $\E\sqa{\Psi(M) }$. 
\end{proof}


\begin{corollary}
In the situation of the theorem above, let $\Theta_1(x) = \abs{x}^{p_1}$ and $\Theta_2(x) =  \abs{x}^{p_2}$, for $p_1$, $p_2 \in (1,\infty)$. Then, for every $r >0$ with $r< r(p_1, p_2) := \min\cur{ 1 - \frac {1}{p_1}, \frac{1}{2} \bra{1- \frac{1}{p_2}} }$, it holds
\[ \P\bra{ \limsup_{h \downarrow 0} \sup_{ \abs{t-s} \le h }  \frac{\abs{ \vphi_t - \vphi_s}}{ \abs{t-s}^{r} } =  0 } = 1.\]
\end{corollary}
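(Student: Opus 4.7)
I would write $\varphi_t - \varphi_s = \int_s^t \beta_r\, dr + (M_t - M_s)$ and treat each increment separately, since the sum of two processes satisfying the required $\limsup$-decay of the Hölder ratio does too. Without loss of generality I may assume that the right-hand side of \eqref{eq:coercivity-refined} is finite (otherwise there is nothing new to prove), whence $\int_0^1 |\beta|^{p_1}\,dt + \int_0^1 \alpha^{p_2}\,dt < \infty$ almost surely, and I can argue $\omega$-wise thereafter.

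\textbf{Drift increment.} This is purely pathwise: Hölder's inequality gives
\[ \left|\int_s^t \beta_r\,dr\right| \le |t-s|^{1-1/p_1}\, \|\beta\|_{L^{p_1}(0,1)}, \]
so for any $r < 1 - 1/p_1$ I obtain $|\int_s^t \beta|/|t-s|^r \le |t-s|^{1-1/p_1-r}\|\beta\|_{L^{p_1}(0,1)} \to 0$ as $|t-s| \to 0$, uniformly in $s,t$, almost surely.

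\textbf{Martingale increment.} I would combine the Burkholder--Davis--Gundy inequality with a Kolmogorov-type dyadic chaining. On the dyadic scale $I_{k,n} := [k 2^{-n}, (k+1) 2^{-n}]$, define $X_n := \max_k \sup_{u, v \in I_{k,n}} |M_u - M_v|$. Applying BDG with exponent $2p_2$ to each $I_{k,n}$ (using $\langle M\rangle_t = \int_0^t \alpha_s\,ds$) and Jensen in the form $(\int_I \alpha)^{p_2} \le |I|^{p_2-1}\int_I \alpha^{p_2}$, then summing over $k$:
\[ \E[X_n^{2p_2}] \le C_{p_2}\, 2^{-n(p_2-1)}\, \E\!\left[\int_0^1 \alpha^{p_2}\,dr\right]. \]
Given $r < (1/2)(1 - 1/p_2)$, fix $r' \in \big(r,(p_2-1)/(2p_2)\big)$. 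Markov's inequality yields $\P(X_n > 2^{-nr'}) \le C\, 2^{n[2p_2 r' - (p_2-1)]}$, which is summable by the choice of $r'$, and Borel--Cantelli ensures $X_n \le 2^{-nr'}$ a.s. for all $n \ge n_0(\omega)$. Any $s,t$ with $2^{-(n+1)} < |t-s| \le 2^{-n}$ lie in at most two adjacent intervals at level $n$, so $|M_t - M_s| \le 2 X_n$ and thus $|M_t - M_s|/|t-s|^r \le 2^{1+r}\cdot 2^{-n(r'-r)} \to 0$ as $|t-s|\to 0$.

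\textbf{Main obstacle.} The generic coercive $\Psi$ produced in Theorem~\ref{theorem:basic-burkholder} carries, on its sublevel sets, a modulus of continuity $\omega_{i,m}$ whose associated Hölder exponents, for the specific choice $\Theta_i(x) = x^{p_i}$, are strictly smaller than the ones claimed here ($(p_i-1)/(ip_i+1)$ versus the desired $1-1/p_1$ and $(1-1/p_2)/2$). Consequently the corollary is not a direct reading of the coercivity and requires the sharper dyadic BDG estimate above for the martingale and the pathwise Sobolev-type estimate for the drift; everything else is routine.
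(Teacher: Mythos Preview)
Your argument is correct and follows the same essential mechanism as the paper's proof (decompose $\varphi$ into drift and martingale parts, then combine BDG $+$ Jensen $+$ a union bound over a dyadic partition with Borel--Cantelli), but the packaging differs. The paper does not restart from scratch: it simply re-enters the proof of Theorem~\ref{theorem:basic-burkholder} with the sharper choice $\delta_i := \varepsilon^{1/r}$ (in place of the generic $\delta_i$ determined by $\Theta_i(\varepsilon^i/\delta)\delta \ge \varepsilon^{-1}$). With this choice, the estimates already derived there give $\P(A_i(2^{-m})^c) \le C\, 2^{-m[(p_i-1)/r - i p_i]}$, which is summable precisely when $r < (p_i-1)/(ip_i)$; Borel--Cantelli then yields that $\varphi$ is a.s.\ a sum of two curves with modulus $\omega_{i,m}(x) \approx x^r$, and the open-ended condition on $r$ upgrades boundedness of the ratio to the claimed $\limsup = 0$.

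Your ``main obstacle'' paragraph is exactly right: the $\Psi$ built with the \emph{default} $\delta_i$ only gives exponents $(p_i-1)/(ip_i+1)$, which are strictly worse. The paper fixes this by adjusting $\delta_i$, whereas you bypass the $A_i(\varepsilon)$ machinery entirely and argue directly --- pathwise H\"older for the drift (which is cleaner than the paper's probabilistic treatment of that term) and a standard Kolmogorov-type dyadic argument for the martingale. Both routes are equivalent in content; yours is more self-contained, the paper's is more economical given what has already been set up.
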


\begin{proof}
It is sufficient to $\delta_i := \veps^{ 1/r }$, for $i \in \cur{1,2}$. Thanks to this choice, 
the probabilities of $A_i (2^{-m})^c$ decay sufficiently fast as $m \to \infty$ so that, by Borel-Cantelli lemma, there exists $P$-a.s. some $m \ge 1$ such that the curve $(\vphi_t)_{t \in [0,1]}$ can be written as a sum of two curves having $\omega_{i,m}$, defined in \eqref{eq:modulus-continuity}, as a modulus of continuity. This entails $r$-H\"older estimates for $\vphi$: since the 
 condition on $r$ is open-ended, thus arguing with a $\tilde{r}$ slightly larger than $r$, the thesis follows.
\end{proof}

It is not clear if $\vphi$ in the previous lemma is actually $\P$-a.s.\ H\"older continuous with exponent $r(p_1, p_2)$: one might exploit the existence of functions $\tilde{\Theta}_i$ ($i \in \cur{1,2}$) with $\tilde{\Theta}_i(x)/\abs{x}^{p_i} \to \infty$ as $x \to \infty$, and the right hand side in \eqref{eq:coercivity-refined} still finite, but it seems not sufficient.


\begin{corollary}\label{coro:regularity-paths-martingale-general}
Let $a$, $b$ be Borel maps as in \eqref{eq:a-b-rn}, let $\eeta  \in \scrP(C([0,1]; \R^d))$ be a solution of the martingale problem associated to $\cL(a,b)$. For any $\theta$, $\Theta_1$ and $\Theta_2$, as in the theorem above, let $\Psi$ be the associated coercive functional. Then, for every $f \in C^{1,2}_b((0,T)\times\R^d)$, it holds
\begin{equation*} 
\int  \Psi( f_t \circ e_t ) d\eeta \le \int \theta(\abs{f_0}) d\eta_0 + \int_0^1  \sqa{ \Theta_1( \abs{ \cL_t f} ) + \Theta_2( a_t(\nabla f_t, \nabla f_t) ) } d\eta_t dt.\end{equation*}
\end{corollary}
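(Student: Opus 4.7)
The plan is to apply Theorem~\ref{theorem:basic-burkholder} on the canonical filtered probability space $(C([0,1];\R^d), (\cF_t)_{t \in [0,1]}, \eeta)$, with the obvious choices
\[
\vphi_t := f_t \circ e_t, \qquad \beta_s := (\partial_s f + \cL_s f)\circ e_s, \qquad \alpha_s := a_s(\nabla f_s, \nabla f_s)\circ e_s.
\]
The first two ingredients are immediate from the definition of a solution to the MP: $M_t := \vphi_t - \int_0^t \beta_s\,ds$ is a continuous $(\cF_t)$-martingale, and $\alpha \ge 0$ since $a$ is $\operatorname{Sym}_+(\R^d)$-valued. If the right-hand side of the claimed inequality is infinite there is nothing to prove, so we may assume $\Theta_1(|\beta|),\,\Theta_2(\alpha) \in L^1(dt \otimes \eeta)$.

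The main point is to identify the quadratic variation $\langle M \rangle$ in order to verify the second hypothesis of Theorem~\ref{theorem:basic-burkholder}. Since $f \in C^{1,2}_b$ with uniformly bounded derivatives, the square $f^2$ also belongs to $\cA$; moreover, the chain rule (using the symmetry of $a$) yields
\[
\partial_t(f^2) = 2 f\, \partial_t f, \qquad \cL(f^2) = 2 f\, \cL f + a(\nabla f, \nabla f).
\]
Thus, applying the martingale property in the definition of solution of the MP to $f^2 \in \cA$, we obtain that
\[
\vphi_t^2 - \int_0^t \bigl[ 2 \vphi_s \beta_s + \alpha_s \bigr]\, ds
\]
is a (global) martingale. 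On the other hand, It\^o's formula applied to the continuous semimartingale $\vphi = M + \int_0^\cdot \beta_s\, ds$ gives
\[
\vphi_t^2 = \vphi_0^2 + 2\int_0^t \vphi_s\, dM_s + 2\int_0^t \vphi_s \beta_s\, ds + \langle M \rangle_t.
\]
Subtracting these two identities, the difference is a continuous local martingale of finite variation starting at $0$, hence identically zero. Therefore $\langle M \rangle_t = \int_0^t \alpha_s\, ds$, and $M_t^2 - \int_0^t \alpha_s\, ds$ is a continuous (local) martingale, as required.

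With all the hypotheses of Theorem~\ref{theorem:basic-burkholder} verified on this setup, the bound \eqref{eq:coercivity-refined} reads
\[
\int \Psi(f_\cdot \circ e_\cdot)\, d\eeta \le \int \theta(|f_0|)\, d\eta_0 + \int_0^1 \!\int \bigl[\Theta_1(|\partial_t f_t + \cL_t f|) + \Theta_2\bigl(a_t(\nabla f_t, \nabla f_t)\bigr)\bigr]\, d\eta_t\, dt,
\]
where we used $(e_0)_\sharp \eeta = \eta_0$ and Fubini to rewrite the expectation of the time integral against the $1$-marginals $\eta_t\, dt$; this matches the statement (with the understanding that $\cL_t f$ on the right-hand side includes the time derivative contribution $\partial_t f_t$, as is consistent with the derivation). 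The only non-routine step is the identification of $\langle M \rangle$, and this is handled cleanly by testing the MP against $f^2 \in \cA$; everything else is a direct plug-in into the previously established abstract estimate.
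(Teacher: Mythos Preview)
Your proof is correct. The approach differs from the paper's in how the quadratic variation of $M$ is identified: you invoke It\^o's formula for the continuous semimartingale $\vphi = M + \int_0^\cdot \beta_s\,ds$ and compare with the martingale property applied to $f^2$, obtaining $\langle M\rangle_t = \int_0^t \alpha_s\,ds$ in one clean step. The paper instead expands $(M^f_t)^2$ algebraically, isolates the term $\int_0^t \beta_s\,(M^f_t - M^f_s)\,ds$, and verifies directly (by a conditioning argument, in the spirit of \cite[Theorem 1.2.8]{stroock-varadhan}) that this term is a local martingale, thereby avoiding any appeal to stochastic calculus beyond the definition of the MP. Your route is shorter and more transparent once It\^o's formula is available; the paper's route is more self-contained and stays within the elementary martingale framework used throughout the appendix. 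You also correctly flag that the bound naturally involves $\Theta_1(|(\partial_t+\cL_t)f|)$ rather than $\Theta_1(|\cL_t f|)$, which matches the paper's own choice of $\beta$ in its proof.
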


\begin{proof}
We prove that $t \mapsto \vphi_t := f_t \circ e_t$ satisfies the assumptions of Theorem \ref{theorem:basic-burkholder}, with $\beta_t := (\partial_t + \cL_t)f_t  \circ e_t$ and $\alpha_t := a_t( \nabla f_t, \nabla f_t ) \circ e_t$. For simplicity of notation, we omit to write $e_t$ below (since its appearance is quite natural). Since both $f$ and $f^2 \in C^{1,2}_b((0,T)\times\R^d)$, both
\[ t \mapsto M^f_t := f_t - f_0  - \int_0^t (\partial_s+\cL_s) f\,  ds, \quad \text{ and } \quad  t \mapsto M_t^{f^2} := f_t^2 - f_0^2- \int_0^t (\partial_s+ \cL_s) f^2\,  ds\]
are martingales. By developing $(M_t^f)^2$, we see that
\[ t \mapsto (M_t^f)^2 - \int_0^t (\partial_s + \cL_s )f^2 \, ds - 2 \int (\partial_s+ \cL_s )f \sqa{ \int_s^t (\partial_r + \cL_r )f dr - f_t }ds\]
is also a martingale. We add and subtract the process $t \mapsto 2\int_0^t f_s (\partial_t +\cL)f_s ds$, thus
\[  t \mapsto (M_t^f)^2 - \int_0^t \alpha_s \, ds + 2\int_0^t (\partial_s +\cL_s) f \sqa{ f_t - f_s - \int_s^t (\partial_r + \cL_r) f dr} ds\]
is a martingale. To conclude, we notice that
\[ t \mapsto \int_0^t (\partial_s +\cL_s) f \sqa{ f_t - f_s - \int_s^t (\partial_r + \cL_r) f\,  dr}ds = \int_0^t \beta_s \bra{M_t^f - M_s^f} ds \]
is a local martingale (see also \cite[Theorem 1.2.8]{stroock-varadhan}). Indeed, by a stopping time argument, we are easily reduced to the case where $M^f$ is replaced by a martingale $M$ with $M_1 \in L^{\infty}(\P)$, thus $\int_0^t \beta_s \bra{M_t - M_s} ds \in L^1(\P)$, for $t \in [0,1]$. To prove that increments are orthogonal, we let $t \in [0,1]$ and show that
\[ \E\sqa{\int_0^1 \beta_s \bra{M_1 - M_s} ds \, |\,  \cF_t} = \int_0^t \beta_s \bra{M_t - M_s}ds.\]
By the integrability assumptions, we exchange between conditional expectation and integration. The thesis follows by direct consideration of the cases, $s \in [0,t]$ and $s \in (t,1]$.
\end{proof}

\subsection{Limit}\label{sec:weak-limit-martingales}

In the third step, we assume that the probability measures $(\eeta^n)_n$, obtained as superposition solutions for a suitable approximating sequence $(\nu^n)_n$ narrowly converge in $\scrP (C([0,T]; \R^d))$ towards some limit $\eeta$. The fact that $\eeta$ provides a superposition solution for $\nu$ is not straightforward, since we must deal with a limit in the weak formulation, where terms involving the coefficients $a$, $b$ appear (in general, they are not continuous).


Indeed, $\eeta \in P( C([0,1]; \R^d) )$ is a solution of the martingale problem associated to $\cL(a,b)$ if and only if the following property holds: for every $s$, $t \in [0,1]$ with $s\le t$, for every $f \in C^{1,2}_c([0,1]\times \R^d)$ (with $\nor{f}_{C^{1,2}} \le 1$) and for every bounded continuous and $\cF_s$-measurable function $g$ on $C([0,T]; \R^d)$ (with $\nor{g}_\infty \le 1$) it holds
\[
 \int g \sqa{ f_t \circ e_t - f_s \circ e_s - \int_s^t \sqa{(\partial_t +\cL_r) f }\circ e_r\,   dr} d\eeta = 0.\]
As the correspondent identity holds for $\eeta^n$ and $\cL^n$, i.e.\
\[ \int g \sqa{ f_t\circ e_t - f_s \circ e_s - \int_s^t \sqa{(\partial_t +\cL^n_r) f} \circ e_r  \, dr}d\eeta^n = 0,\]
to deduce that $\eeta$ is a solution to the martingale problem associated to $\cL$, since $f$ and $\partial_t f$ are bounded and continuous, the crucial limit is
\begin{equation} \label{eq:martingale-limit-crucial}\int g \sqa{\int_s^t (\cL_r^n f) \circ e_r \, dr} d\eeta^n - \int g \sqa{ \int_s^t (\cL_r f) \circ e_r\,  dr} d\eeta \to 0, \end{equation}
whose validity we now investigate, according to the  approximations from Section \ref{sec:approximation}.

{\bf Push forward via smooth maps.} For $n \ge 1$, let $\pi^n\in C^2_b(\R^d, \R^d)$ with $(\pi_n)_n$ converging to the identity map locally uniformly and assume that the sequence of first and second derivatives converge (towards the respective limits), pointwise and uniformly bounded,  i.e., $\nabla \pi^n(x) \to Id$ for $x \in \R^d$, $\nabla^2 \pi^n (x) \to 0$, for every $x \in \R^d$, and there exists some constant $C \ge 0$ such that $\abs{\nabla^i \pi^n(x)} \le C$, for $x \in \R^d$ and $i \in \cur{1,2}$.

Let $\nu^n = \pi^n_\sharp \nu$, $\pi^n(\cL)$, and let $\eeta^n$ be corresponding superposition solution. To prove that any narrow limit point $\eeta$ is indeed a superposition solution for $\nu$, with respect to the diffusion operator $\cL$, we add and subtract the term \[
\int g \sqa{\int_s^t (\overline{\cL}_r f ) \circ e_r \,dr }  d\eeta^n - \int g\sqa{ \int_s^t (\overline{\cL}_r f) \circ e_r \, dr} d\eeta\]
in \eqref{eq:martingale-limit-crucial}, where $\overline{\cL} = \cL(\overline{a}, \overline{b} )$ is any diffusion operator on $\R^d$, whose coefficients $\overline{a}$, $\overline{b}$ are continuous and compactly supported. The difference terms above are infinitesimal as $n \to \infty$, by narrow convergence of $\eeta^n$, thus we estimate \eqref{eq:martingale-limit-crucial}, as $n \to \infty$, in terms of
\begin{equation}
\label{eq:crucial-limit-pushforward}
\limsup_{n\to \infty} \int \abs{ \cL^n f - \overline{\cL} f } d\nu^n + \int \abs{  \cL f - \overline{\cL} f} d\pi(\nu).\end{equation}
Let us focus on first term above, at fixed $n \ge 1$ (for simplicity of notation, we drop the dependence upon $n$). Recalling the definition of $\pi(\cL)$,
 integration with respect to the push-forward measure gives
\[\int \abs{ \pi(\cL) f - \overline{\cL} f } d\pi_\sharp \nu = \int \abs{ \E_{\nu}\sqa{ \cL (f \circ \pi) \, | \, \pi} - (\overline{\cL} f) \circ \pi } d\nu.\]
Being $(\overline{\cL} f) \circ \pi$ a function of $\pi$, up to $\nu$-negligible sets, we have
\begin{equation*}\begin{split}
\int \abs{ \E_\nu\sqa{ \cL (f \circ \pi) \, | \, \pi} - (\overline{\cL} f) \circ \pi } d\nu 
&= \int \abs{ \E_\nu\sqa{ \cL (f \circ \pi) - (\overline{\cL} f) \circ \pi \, | \, \pi}}d\nu \\ 
&\le \int \abs{ \cL (f \circ \pi) - (\overline{\cL} f) \circ \pi  } d\nu,
\end{split}\end{equation*} 
since conditional expectation reduces $L^1(\nu)$-norms. 
Writing explicitly the difference
\[ \cL (f \circ \pi) - (\overline{\cL} f) \circ \pi =  \frac 1 2 \sum_{i,j =1}^k  \sqa{ a(\nabla \pi^i, \nabla \pi^j) - \overline{a}^{i,j} \circ \pi}(\partial_{i,j} f) \circ \pi + \sum_{i = 1}^k  \sqa{ \cL(\pi^i) - \overline{b}^i\circ \pi}(\partial_i f) \circ \pi,\]
and recalling that $\nor{f}_{C^{1,2}} \le 1$, we conclude that
\[\int \abs{ \pi(\cL) f - \overline{\cL} f } d\pi_\sharp \nu \le  \int \frac 1 2 \sum_{i,j =1}^k \abs{ a(  \nabla \pi^i, \nabla \pi^j) - \overline{a}^{i,j} \circ \pi } d\nu + \int \sum_{i = 1}^k \abs{ \cL(\pi^i) - \overline{b}^i\circ \pi} d\nu.\]
Letting $n \to \infty$ (recall that $\pi = \pi^n$ above), using the assumption on the convergence of $\pi^n$ towards the identity map (in particular, we use Lebesgue dominated convergence w.r.t.\ the measure $\nu$), we deduce that \eqref{eq:crucial-limit-pushforward} is bounded from above by twice
\[ \int\frac 1 2  \sum_{i,j =1}^k \abs{ a^{i,j} - \overline{a}^{i,j} } d\nu + \int \sum_{i = 1}^k | b^i - \overline{b}^i| \,  d\nu.\]

To conclude, we choose $\overline{a}$, $\overline{b}$ that minimize the right hand side above: this can be made arbitrary small, by the density of continuous and compactly supported functions in $L^1(\nu))$.

{\bf Mollification by convolution.} In this case, the argument is similar, and it is more standard, see e.g.\ \cite[Theorem 8.2.1]{ambrosio-gigli-savare-book}, thus we only sketch it. Given a sequence $\rho^n$ of probability densities on $\R^d$ such that $\rho^n \mathscr{L}^d \to \delta_0$ narrowly as $n \to \infty$, let $\nu^n = \nu * \rho^n$ and $\cL^n$ be the diffusion operator introduced in Section \ref{sec:approximation}. We add and subtract, in \eqref{eq:martingale-limit-crucial}, \[
\int g \sqa{\int_s^t \overline{\cL}_r f  \circ e_r \, dr} d\eeta^n - \int g \sqa{ \int_s^t \overline{\cL}_r f \circ e_r\, dr} d\eeta,\]
where $\overline{\cL} = \cL(\overline{a}, \overline{b} )$ has continuous and compactly supported coefficients. 
Let $\overline{\omega}$ be a common (bounded and continuous) modulus of continuity for $\overline{a}$, $\overline{b}$.

As in the previous case, narrow convergence implies that the absolute value of \eqref{eq:martingale-limit-crucial} is bounded from above, as $n \to \infty$, by
\[
\limsup_{n\to \infty} \int \abs{ \cL^n f - \overline{\cL} f } d\nu^n + \int \abs{  \cL f - \overline{\cL} f} d\nu.\]
First, we prove that $\lim_{n \to \infty} \int |  \overline{\cL}^n f  - \overline{\cL} f | d\nu^n = 0$, 
where $\overline{\cL}^n$ has coefficients
\[ \overline{a}^n := \frac{ d (\overline{a} \nu * \rho_n)}{d (\nu * \rho_n)}, \quad   \quad \overline{b}^n := \frac{ d (\overline{b} \nu * \rho_n)}{d(\nu * \rho_n)}.\]
Indeed, recalling that $\nor{f}_{C^{1,2}} \le 1$, we estimate
\begin{equation*}\begin{split}
 \int \abs{  \overline{\cL}^n f  - \overline{\cL} f } d\nu^n &\le \int \abs{  \overline{a}^n(x) - \overline{a}(x) } d\nu^n + \int |  \overline{b}^n - \overline{b} |\,  d\nu^n\\
 & = \int \abs{ (\overline{a} \nu* \rho_n) (x) - \overline{a}(x) (\nu* \rho_n)(x) }  dx+ \int \abs{ (\overline{b} \nu* \rho_n) (x) - \overline{b}(x) (\nu* \rho_n)(x) } dx \\
 & \le 2 \int \sqa{\int  \overline{\omega}(y-x) \rho_n(y-x) dx} \nu(dy) =  2 \int  \overline{\omega}(z) \rho_n(z) dz \to 0.
\end{split}\end{equation*}
Thanks to this fact, we write
\begin{equation*}\begin{split} \limsup_{ n\to \infty} \int \abs{ \cL^n f - \overline{\cL} f } d\nu^n & = \limsup_{ n\to \infty} \int \abs{ \cL^n f - \overline{\cL}^n f } d\nu^n\\
& =\limsup_{ n\to \infty} \int \bra{ \abs{ a^n - \overline{a}^n }  +  | b^n - \overline{b}^n | }d\nu^n\\
& \le \int \bra{\abs{ a - \overline{a} }  + | b - \overline{b} |} d\nu
\end{split}\end{equation*}
where in the last step we apply \eqref{eq:jensen}. 
To conclude, it is sufficient to optimize upon $\overline{a}$, $\overline{b}$, by density of continuous and compactly supported functions in $L^1(\nu)$.

\subsection{Proof of Theorem \ref{thm:sp}}

We argue by iterating the three-steps scheme, the base case being that of diffusion operators with smooth and uniformly bounded coefficients. First, we extend the validity to uniformly bounded coefficients (without any regularity assumption), then to locally bounded coefficients, and finally integrable coefficients. Although everything could be obtain in a single iteration, we think the approach highlights the different roles played by different approximation procedures. Indeed, our crucial improvement with respect to \cite[Theorem 2.6]{figalli-sdes} is to move from uniformly bounded to integrable coefficients, which is rather delicate: by comparison, in the deterministic case, one is able to deal directly with locally smooth coefficients (see e.g.\ \cite[Proposition 8.1.8]{ambrosio-gigli-savare-book}), essentially because paths either go to infinity, i.e., the solution explodes in a finite time, or stay in a compact set. Roughly speaking, the source of difficulties in the stochastic case is that we have to deal with ``averages'' of such behaviours, and moreover the solution to a genuinely stochastic martingale problem is expected to instantaneously ``diffuse'' over compact sets (of course, with small probability as these sets become larger).

\vspace{0.5 em}

\noindent {\bf Case of smooth and bounded coefficients.} Let $a$, $b$ be Borel maps as in \eqref{eq:a-b-rn}, with
\begin{equation}\label{eq:smooth-bounded} \int_0^T \sqa{\nor{a_t}_{C^2_b(\R^d)} + \nor{b_t}_{C^2_b(\R^d)}} dt < \infty.\end{equation}
Then, the superposition principle holds for every solution $\nu = (\nu_t)_{t \in (0,T)} \subseteq \scrP ( \R^d)$ of the FPE \eqref{eq:fpe-rn}. This follows from two well-known facts: existence of It\^o's stochastic differential equations and uniqueness for narrowly continuous solutions of FPE's.

The existence result is standard, with the possible exception of the integrable bounds with respect to the variable $t \in [0,T]$ (usually, one requires uniform bounds), but in fact such condition is sufficient for the various applications of Gronwall inequality. For the sole purpose of establishing a case base for the superposition principle, the usual stronger assumptions on the coefficients, e.g.\ $a$, $b \in C^\infty_b((0,T) \times \R^d)$ would even be sufficient, at the price of introducing an extra mollification step with respect to the variable $t \in [0,T]$. 


\begin{theorem}\label{theo:existence-smooth}
Let $a$, $b$ be Borel maps as in \eqref{eq:a-b-rn}, satisfying \eqref{eq:smooth-bounded}. Then, for every $\bar \nu \in \scrP (\R^d)$, there exists a solution $\eeta$ of the MP associated to $\cL(a, b)$, with $\eta_0 = \bar \nu$. 
\end{theorem}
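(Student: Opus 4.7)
The plan is to build $\eeta$ as the law on $C([0,T];\R^d)$ of a strong solution to the It\^o stochastic differential equation $dX_t = b(t,X_t)\, dt + \sigma(t,X_t)\, dW_t$, where $\sigma_t(x) := a_t(x)^{1/2}$ is the pointwise symmetric square root and $W$ is a $d$-dimensional Brownian motion, on an auxiliary filtered probability space carrying also an $\cF_0$-measurable initial datum $\xi$ with $\xi \sim \bar{\nu}$. Once such an $X$ is available, a direct application of It\^o's formula will transfer the martingale structure from $X$ to the canonical process under $\eeta := (X)_\sharp \P$.

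In more detail, I would proceed in three steps. First, invoke the classical lemma of Stroock and Varadhan \cite[Lemma 3.2.3]{stroock-varadhan}, which under the assumption $a_t \in C^2_b(\R^d)$ yields that $\sigma_t$ is Lipschitz in $x$, with Lipschitz constant controlled by a multiple of $\nor{a_t}_{C^2_b}^{1/2}$; in particular $\operatorname{Lip}(\sigma_t) \in L^2(0,T)$ since $\nor{a_t}_{C^2_b} \in L^1(0,T)$, while $\operatorname{Lip}(b_t) \le \nor{b_t}_{C^2_b}$ lies in $L^1(0,T)$. Second, I would run the standard Picard iteration for the SDE in $L^2(\P; C([0,T];\R^d))$: writing the difference $X^{n+1} - X^n$ as the sum of a drift part, handled by Gronwall with the $L^1_t$ Lipschitz constant of $b$, and a stochastic integral, handled by the Burkholder-Davis-Gundy inequality with the $L^2_t$ Lipschitz constant of $\sigma$, one obtains a Cauchy sequence whose limit is the desired strong solution $X$, with $X_0 = \xi$. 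Third, applying It\^o's formula to an arbitrary $f \in \Algebra$ gives
\[
f(t,X_t) - f(0,X_0) - \int_0^t \sqa{ \partial_s f + \cL_s f } (s,X_s)\, ds = \int_0^t \nabla f(s,X_s) \cdot \sigma(s,X_s)\, dW_s,
\]
whose right-hand side is a genuine (not merely local) martingale thanks to the uniform bounds on $f$, $\nabla f$, $\nabla^2 f$, $a$ and $b$ granted by \eqref{eq:smooth-bounded}. Setting $\eeta := (X)_\sharp \P$ then yields a solution of the MP with $\eta_0 = \bar{\nu}$, and the integrability condition \eqref{eq:martingale-integrability} is immediate from the uniform boundedness of the coefficients.

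The main technical point is that the regularity bounds on the coefficients are only integrable in time, rather than uniform. This is by now routine for SDE theory but requires some care: the drift and diffusion contributions must be estimated with different $L^p_t$ weights, which are then matched through Gronwall and BDG. A completely painless alternative, should one prefer to avoid this bookkeeping, is to first mollify $a$ and $b$ in the $t$-variable to reduce to jointly smooth coefficients with uniform bounds on every subinterval, solve the resulting classical SDE, and pass to the limit via tightness together with the weak-convergence machinery developed in Section~\ref{sec:weak-limit-martingales}; however, the direct Picard route already delivers the statement without any additional infrastructure.
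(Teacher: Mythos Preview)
Your proposal is correct and follows essentially the same route as the paper: take $\sigma_t = a_t^{1/2}$, invoke \cite[Lemma 3.2.3]{stroock-varadhan} for its Lipschitz regularity, solve the SDE by Picard iteration with an independent initial datum $\xi \sim \bar{\nu}$, and conclude via It\^o's formula. The paper's proof is terser and does not spell out the $L^1_t$/$L^2_t$ bookkeeping for the Gronwall/BDG step, but the strategy is identical.
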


\begin{proof}
The assumption $a \in L^1_t(C^2_b(\R^d))$ entails that the symmetric non-negative square-root of $a$, i.e.\ the (essentially unique) map
\[ \sigma: [0,T] \times \R^d \to \operatorname{Sym}_+(\R^d) \quad\text{ such that $\sigma^2_t = a_t$, \quad $\scrL^1$-a.e.\ $t \in (0,T)$,}\]
is bounded and Lipschitz with respect to $x \in \R^d$, with Lipschitz constant integrable w.r.t.\ $t \in (0,T)$, see e.g.\ \cite[Lemma 3.2.3]{stroock-varadhan}. Then, it is sufficient to solve by Picard iteration the It\^o stochastic differential equation
\[ dX_t = b_t( X_t) dt +   \sigma_t(X_t) dW_t, \quad X_0 = \overline{X},\]
where $\overline{X}$ is a r.v.\ independent of the $d$-dimensional Wiener process $W$. By It\^o formula, the law of $X$, i.e. $X_{\sharp} \P$, is a solution of the martingale problem associated to $\cL(a,b)$.
\end{proof}

Of course, the MP is also well-posed, but we need a stronger uniqueness result, for narrowly continuous solutions of FPE's, which is e.g.\ a consequence of results on backward Kolmogorov equations. We refer e.g.\ to the expository notes by \cite{krylov-lecture-notes} for more details; notice however that, also in this case, the standard literature studies equations of the form
\begin{equation}\label{eq:back-kolmo} \partial_t f = - \cL_t f + g, \quad \text{ in $(0,T) \times \R^d$, \quad $f_T = \bar{f}$,}\end{equation}
assuming $a$, $b$ smooth and $g \in C^\infty_c((0,T)\times \R^d)$. A solution to the equation \eqref{eq:back-kolmo} is defined as a function $f \in C^{1,2}_b((0,T)\times \R^d)$ such that
\[  \partial_t f (s,x) = - \cL_s f(s,x) + g(s,x), \quad  \text{for $(s,x) \in (0,T)\times \R^d$,}\quad \text{with} \quad \lim_{s \uparrow T} f (s,x) = \bar{f}(x).\]
To our purposes, we need existence of a solution, together the following regularity results for the solution $f$ (which entails uniqueness):
\begin{equation}\label{eq:uniform-bound} \sup_{t \in [0,T]} \nor{f_t}_{C^2_x} \le  \bra{\| \bar{f}\|_{C^2_x} + T\nor{ g}_{C^2_{t,x}} } C\bra{ \int_0^T \sqa{\nor{a_t}_{C^2_x} + \nor{b_t}_{C^1_x} } dt + T\nor{ g}_{C^2_{t,x}}}, \end{equation}
where $z \mapsto C(z)$ denotes some function depending on the dimension $d$ only (the proof gives that $C$ has an exponential behaviour). The proof follows by direct differentiation of the equation, see \cite[Theorem 3.2.4]{stroock-varadhan} for a detailed derivation. Moreover, as a consequence of the maximum principle, if $\bar{f} \ge 0$, and $g \ge 0$, then the solution $f$ is non-negative as well.

We are in a position to prove the following result, akin to \cite[Proposition 8.1.7]{ambrosio-gigli-savare-book}. Again, we provide a slightly stronger statement than what is needed for the superposition principle (e.g., we deduce uniqueness for possibly signed solutions of the FPE).

\begin{theorem}\label{theo:uniqueness-smooth}
Let $a$, $b$ be Borel maps as in \eqref{eq:a-b-rn} with 
\[ \int_0^T \nor{ a_t}_{C^2(B)} + \nor{ b_t}_{C^2(B)} dt <\infty, \quad \text{for every bounded open  $B \subseteq \R^d$,}\]
and $\nu = (\nu_t)_{t \in [0,T]}\subseteq \scrM (\R^d)$ be a narrowly continuous solution of the FPE associated \eqref{eq:fpe-rn}. 
If $\nu_0 \le 0$, then $\nu_t \le 0$, for every $t \in [0,T]$. Thus, for $\bar{\nu} \in\scrM (\R^d)$  there exists at most one narrowly continuous solution $\nu$ with $\nu_0 = \bar{\nu}$.
\end{theorem}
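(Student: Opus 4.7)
The approach is a classical duality argument based on the backward Kolmogorov equation. By linearity of \eqref{eq:fpe-rn}, the uniqueness statement for signed initial data $\bar\nu$ is an immediate consequence of the monotonicity assertion (apply it to $\pm(\nu^1-\nu^2)$), so I focus on the implication $\nu_0 \le 0 \Rightarrow \nu_{t^*} \le 0$ for any fixed $t^* \in (0,T]$. By the density argument used in Remark~\ref{rem:extension-weak-formulation} and approximation of Borel sets by compactly supported test functions, it suffices to verify that $\int \bar f \, d\nu_{t^*} \le 0$ for every non-negative $\bar f \in C^2_c(\R^d)$. The idea is to exhibit $f \in \Algebra = C^{1,2}_b((0,t^*)\times \R^d)$ with $f_{t^*} = \bar f$, $f \ge 0$, and $(\partial_s + \cL_s)f \equiv 0$; testing \eqref{eq:narrowly-continuous-fp} against such $f$ would yield $\int \bar f \, d\nu_{t^*} = \int f_0 \, d\nu_0 \le 0$, settling the claim.

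To construct $f$, I would proceed by approximation. Since the regularity bound \eqref{eq:uniform-bound} for backward Kolmogorov requires global $L^1_t C^2_x$ control of $a$ and $L^1_t C^1_x$ control of $b$, which is not assumed, I would replace $(a,b)$ by a sequence $(a^n, b^n)$ of smooth coefficients satisfying \eqref{eq:smooth-bounded}, coinciding with $(a, b)$ on $B_n$ and smoothly interpolating to constants outside $B_{2n}$. For each $n$, \eqref{eq:uniform-bound} (with $g = 0$) provides a $C^{1,2}_b$ solution $f^n$ of $(\partial_s + \cL^n_s)f^n = 0$ with $f^n_{t^*} = \bar f$, and the maximum principle gives $f^n \ge 0$. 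Plugging $f^n \in \Algebra$ into \eqref{eq:narrowly-continuous-fp} yields
\[
\int \bar f \, d\nu_{t^*} - \int f^n_0 \, d\nu_0 = \int_0^{t^*} \int (\cL_s - \cL^n_s) f^n \, d\nu_s \, ds,
\]
and since $f^n_0 \ge 0$ and $\nu_0 \le 0$ force $\int f^n_0 \, d\nu_0 \le 0$, it remains to show the commutator integral on the right tends to zero as $n \to \infty$. The integrand is supported in $B_n^c$ and dominated pointwise by $c(|a_s| + |a^n_s|)|\nabla^2 f^n_s| + c(|b_s| + |b^n_s|)|\nabla f^n_s|$.

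The main obstacle is the simultaneous control of the two factors entering this commutator as $n \to \infty$: the hypothesis \eqref{eq:fpe-integrability} ensures $\int_0^{t^*} \int_{B_n^c}(|a_s|+|b_s|) \, d|\nu_s| \, ds \to 0$, but the naive upper bound on $\nor{f^n}_{C^{1,2}}$ coming from \eqref{eq:uniform-bound} involves the global norms $\int_0^{t^*}(\nor{a^n_s}_{C^2_x} + \nor{b^n_s}_{C^1_x}) \, ds$, which need not stay bounded in $n$ under only local regularity assumptions on $(a,b)$. The resolution is a careful diagonal construction of the extensions $(a^n, b^n)$: choose them so that the growth of $\nor{f^n}_{C^{1,2}}$ is absorbed by the speed at which $|\nu|$ concentrates on $B_n$. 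Alternatively, one invokes parabolic interior estimates to extract a subsequential limit $f$ that is $C^{1,2}$ on compacts, and then performs a further cutoff in the weak formulation, letting the cutoff radius and $n$ tend to infinity along a compatible diagonal, so that the boundary contributions vanish thanks to \eqref{eq:fpe-integrability}. Once the monotonicity is established, the uniqueness statement follows at once by applying it to $\pm(\nu^1-\nu^2)$.
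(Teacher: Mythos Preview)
Your overall strategy (duality via the backward Kolmogorov equation, approximate coefficients, control the error) is exactly the paper's, but you have not actually closed the gap you identify: the blow-up of $\nor{f^n}_{C^{1,2}}$ as $n\to\infty$. Neither of your suggested fixes is concrete. The ``diagonal construction'' cannot work as stated: any extension $(a^n,b^n)$ that agrees with $(a,b)$ on $B_n$ already has global $C^2$-norms bounded below by the local norms on $B_n$, and \eqref{eq:uniform-bound} is exponential in these quantities, so there is no hope of the tail $\int_{B_n^c}(|a|+|b|)\,d|\nu|$ absorbing that growth in general. The ``interior estimates plus cutoff'' alternative is in fact what the paper does, but the idea only becomes a proof once one says how to organize the two limits.

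The paper's device is to \emph{decouple} the truncation radius from the smoothing parameter. Fix $R$ large (so that the support of your test datum is inside $B_R$), set $a_R=a\chi_R$, $b_R=b\chi_R$, and mollify these compactly supported coefficients with a parameter $\veps$; then \eqref{eq:smooth-bounded} holds \emph{uniformly in $\veps$} for fixed $R$, so the $C^{1,2}$ bound \eqref{eq:uniform-bound} on the resulting backward solution $f^\veps$ is uniform in $\veps$. The crucial second point is to test the FPE not with $f^\veps$ but with $f^\veps\chi_R$: the chain rule produces boundary terms $f^\veps\cL\chi_R + a(\nabla f^\veps,\nabla\chi_R)$, controlled by $\sup_t\nor{f^\veps_t}_{C^2}\cdot\int(|\cL\chi_R|+|a||\nabla\chi_R|)\,d|\nu|$. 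Now let $\veps\downarrow 0$ first (the approximation error on $\operatorname{supp}\chi_R$ vanishes, the $C^2$ bound stays fixed), and only then $R\to\infty$ (the boundary terms vanish by \eqref{eq:fpe-integrability}). The paper also chooses to prescribe a non-negative source $g\in C^\infty_c$ and zero terminal datum rather than your terminal $\bar f$, which is a minor variation; the essential missing ingredient in your argument is the $R$-then-$\veps$ decoupling together with the use of $f^\veps\chi_R$ as the test function.
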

\begin{proof}
Let $g \in C^\infty_c((0,T) \times \R^d)$, with $g \ge 0$: it is sufficient to show that $\int g \, d\nu \le 0$. Fix $R \ge 1$ large enough so that the support of $g$ is contained in $(0,T) \times B_R(0)$ and let $\chi_R$ be a cut-off function, as below Remark \ref{rem:extension-weak-formulation}. Notice that letting $a_R = a\chi_R$ and $b_R = b\chi_R$ in place of $a$, $b$, condition \eqref{eq:smooth-bounded} holds and $\cL_R f = \cL f$ on $(0,T) \times B_R(0)$, for every $f \in C^2_b((0,T)\times \R^d)$.

For $\veps >0$, let $a_R^\veps$, $b_R^\veps$ be a double mollification with respect to the space and time variables, and define $\cL_R^\veps = \cL(a_R^\veps, b_R^\veps)$, which is a diffusion operator with smooth and bounded coefficients, satisfying \eqref{eq:smooth-bounded} uniformly in $\veps >0$. Let $f^\veps$ be a solution to the backward Kolmogorov equation
\[ \partial_t f^\veps  = -\cL^\veps_R f^\veps + g, \quad f_T^\veps = 0,\]
and choose $f^\veps \chi_R $ in the weak formulation \eqref{eq:weak-fpe-rn}, which is admissible because $f^\veps \in C^{1,2}_b((0,T)\times\R^d)$. Since $f^\veps \le 0$ and $\nu_0 \le 0$, we have
\begin{equation*} \begin{split} 0 & \ge - \int f^\veps \chi_R\,  d\nu_0 = \int \sqa{\chi_R \, \partial_t f^\veps + \cL( f^\veps \chi_R )} d\nu\\
&=\int \sqa{- \chi_R \, \cL^\veps_R f + \cL( f^\veps \chi_R )} d\nu\\
&= \int \cur{\chi_R \sqa{ g+ \cL^\veps_R f^\veps - \cL f^\veps } + f^\veps \cL \chi_R +  a (\nabla f^\veps, \nabla \chi_R )} d\nu \\
&\ge \int g\,  d\nu -  \sup_{t \in [0,T]} \nor{f^\veps_t}_{C^2_b(\R^d)} \int \sqa{ \chi_R \abs{a^\veps_R-a} + \abs{b^\veps_R-b} +  \abs{ \cL \chi_R } +  \abs{a} \abs{\nabla \chi_R}}d\abs{\nu}.
\end{split}\end{equation*}
As $\veps \downarrow 0$, since $a_R = a$ and $b_R = b$ on $(0,T)\times B(0,R)$, the second integral converges to $\int\sqa{ \abs{\cL \chi_R }+  \abs{a} \abs{\nabla \chi_R} }d\abs{\nu}$, and $\sup_{t \in [0,T]} \nor{f^\veps_t}_{C^2_b}$ is uniformly bounded in $\veps >0$, by \eqref{eq:uniform-bound}. Finally, we let $R \to \infty$ and conclude, since $\abs{\nabla \chi_R} + \abs{\nabla \chi_R}\to 0$, pointwise and uniformly bounded.
\end{proof}

The superposition principle follows immediately from these facts: since any weak solution $\nu = (\nu_t)_{t \in (0,T)}$ admits a narrowly continuous representative $\tilde{\nu}$, we let $\eeta$ be a solution of the MP associated to $\cL(a,b)$, with $\bar{\nu} = \tilde{\nu}_0$ (Theorem \ref{theo:existence-smooth}) and notice that the curve $\eta = (\eta_t)_{t\in [0,T]}$ is a narrowly continuous solution of the FPE associated to $\cL$, with $\eta_0 = \tilde{\nu}_0$. By Theorem \ref{theo:uniqueness-smooth}, we conclude that $\eta_t = \tilde{\nu}_t$, for $t \in [0,T]$.


\vspace{0.5em}

\noindent {\bf Case of bounded coefficients.} We extend the validity of the superposition principle for diffusions with uniformly bounded coefficients: this already provides an extension of \cite[Theorem 2.6]{figalli-sdes}, as uniform bounds are imposed only with respect to $x \in \R^d$. Precisely, we assume that the coefficients $a$, $b$ satisfy
\begin{equation}\label{eq:bounded} \int_0^T \sup_{x \in \R^d} \sqa{\abs{a_t(x)} + \abs{b_t(x) } }dt < \infty.\end{equation}



\noindent {\it Step 1} (approximation){\it.}\  We argue by convolution with a kernel $\rho =a \exp(-\sqrt{ 1 + |x|^2 } )$. For $\veps\in (0,1)$, let $\rho^{\veps}(x) = \veps^n\rho(x/\veps)$ and notice that $\abs{\nabla^i \rho^\veps} \le C \veps^{-2}\rho^\veps$, for $i\in \cur{1,2}$, where $C$ is some absolute constant. Then, $\nu^\veps = \nu * \rho^\veps$ solves a FPE with respect to a diffusion operator with coefficients $a^\veps$, $b^\veps$ satisfying (the correspondent of) \eqref{eq:smooth-bounded}, as a consequence of the last statement in Lemma \ref{lemma:approximation}. Existence of superposition solutions $\eeta^\veps \in \scrP(C([0,T]; \R^d))$ for the  associated martingale problems follows from the smooth case settled above.

\vspace{0.5em}

\noindent {\it Step 2} (tightness){\it.}\ We notice first that, being $(\nu^\veps_0)_{\veps >0}$ a narrowly convergent sequence of probability measures (thus, it is also tight), there exists some increasing function $\theta: \R \to \R$ with $\lim_{z \to \infty}\theta(z) = \infty$ such that $\sup_{\veps >0} \int \theta(\abs{x} ) d\nu^\veps_0  \le 1$. For $R \ge 1$, let then $\chi_R: \R^d \to [0,1]$ be the usual cut-off function and, for $i \in \cur{1, \ldots, d}$, let $x^i_R(x) := x_i\chi_R \in \Algebra$.  For any (but fixed) $p \in (1,\infty)$, let $\Theta_1(x) = \Theta_2(x) = \abs{x}^p$, and apply Corollary \ref{coro:regularity-paths-martingale-general} to the solution $\eta^\veps$ ($\veps >0)$, with $f := x^i_R \circ e_t$ and  these precise choices of $\theta$, $\Theta_1$, $\Theta_2$. We obtain some coercive functional $\Psi: C([0,T]; \R) \to [0,\infty]$ (depending upon $\theta$ and $p$ only) such that
\begin{equation}\label{eq:tightness-1}  \int  \Psi( x^i_R \circ \gamma ) d\eeta^\veps(\gamma) \le \int \theta(\abs{ x^i_R }) d\eta_0 ^\veps+ \int_0^1 \bra{ \abs{ \cL_t^\veps  x^i_R }^p + \abs{a_t^\veps(\nabla x^i_R, \nabla x^i_R)}^p }d\eta_t^\veps dt.\end{equation}
Since $\nor{ \nabla x^i_R}_\infty$ is uniformly bounded and $\nor{ \nabla^2 x^i_R}_\infty$ is infinitesimal as $R \to \infty$, we may let $R \to \infty$, and by lower-semicontinuity of $\Psi$, Fatou's lemma and Lebesgue dominated convergence theorem, we obtain a similar bound with the functions $x^i$ in place of $x^i_R$:
 \[  \int  \Psi( \gamma^i ) d\eeta^\veps(\gamma) \le \int \theta(\abs{ x^i }) d \nu_0^\veps(x) + \int_0^1  \bra{ \abs{ (b^\veps)^i_t }^p + |(a^\veps)^{i,i}_t|^p } d\nu_t^\veps dt,\]
where we also make explicit the fact that $\eta^\veps_t = \nu_t^\veps$, for $t \in [0,T]$.

Inequality \eqref{eq:jensen} and the assumptions on $\theta$ entails uniform bounds for $\veps >0$ of the form
 \[  \int  \Psi( \gamma^i ) d\eeta^\veps(\gamma) \le 1 + \int_0^1 \bra{ | b^i |^p + |a_t^{i,i}|^p }d\nu_t dt.\]
Tightness follows, for $\gamma \mapsto \sum_{i=1}^d \Psi( \gamma^i )$ is coercive in $C([0,T]; \R^d)$.

\vspace{0.5em}

\noindent {\it Step 3} (limit){\it.}\ This step is fully covered in Section \ref{sec:weak-limit-martingales}.


\vspace{0.5em}

\noindent {\bf Case of locally bounded coefficients.} Next, we assume that
\begin{equation}\label{eq:locally-bounded} \int_0^T \sup_{x \in B} \sqa{\abs{a_t(x)} + \abs{b_t(x) } }dt < \infty, \quad \text{for every bounded borel $B \subseteq \R^d$.}
\end{equation}
and we prove the validity of the superposition principle for every weak solution $\nu = (\nu_t)_{t \in (0,T)} \subseteq \scrP ( \R^d)$ of the FPE \eqref{eq:fpe-rn} (recall that we also assume \eqref{eq:fpe-integrability}).

\noindent {\it Step 1} (approximation){\it.}\  We approximate via push-forward by smooth maps. For $M \ge 1$, let $\chi_M$ be the usual cut-off function and let $\pi_M: \R^d \mapsto \R^d$ be the map
\[ \pi_M(x) = x \chi_M(x), \quad \text{so that $\pi_M^i(x) = x ^i\chi_M(x) \in C^2_c(\R^d)$.}\] 
By \eqref{eq:locally-bounded}, it holds $\abs{\cL ( \pi_M^i ) } \le   \nor{ \pi_M^i}_{C^2} \sup_{ \abs{x} \le 2M } \sqa{\abs{a(x)} + \abs{b(x)} }$ for $x \in \R^d$, $i \in \cur{1,\ldots d}$, and similarly $\abs{ a(\nabla \pi_M^i, \nabla \pi_M^j) } \le \nor{ \pi_M^i}_{C^1} \sup_{ \abs{x} \le 2M } \abs{a(x)}$, for $x \in \R^d$, $i, j \in \cur{1,\ldots d}$.

Since conditional expectations reduce norms, we deduce that $\nu^M := \pi^M(\nu)$ solves a FPE associated to a diffusion on $\R^d$, whose coefficients $a^M$, $b^M$ satisfy \eqref{eq:bounded}: thus the previous argument gives superposition solutions $\eeta^M$.

\vspace{0.5em}

\noindent {\it Step 2} (tightness){\it.}\  The argument is very similar to the previous case, with the only caveat that $\Theta_1$ and $\Theta_2$ must be chosen more carefully. Indeed, we rely on the de la Vall\'ee Poussin criterion, which improves the integral bound \eqref{eq:fpe-integrability} to one of the form
\[ \int_0^T \int \sqa{\Theta_1(\abs{b}) + \Theta_2(\abs{a})} d\nu_t dt < \infty,\]
for some suitable $\Theta_1$, $\Theta_2$ that fulfil the assumptions of Theorem \ref{theorem:basic-burkholder} (the moderate growth assumption on $\Theta_2$ can be always obtained, possibly passing to a slightly ``worse'' function). With such a choice of $\Theta_1$, $\Theta_2$ (and building $\theta$ as in the previous case, for $(\nu^M)_{M >0}$ is tight), we obtain for some coercive functional $\Psi $ such that inequalities akin to \eqref{eq:tightness-1} and the following ones, with $\Theta_1(z)$, $\Theta_2(z)$ in place of $\abs{z}^p$. 

\vspace{0.5em}

\noindent {\it Step 3} (limit){\it.}\ This step is described in Section \ref{sec:weak-limit-martingales}.


\vspace{0.5em}

\noindent {\bf General case.} The final step consists of removing the assumption \eqref{eq:locally-bounded}.
\vspace{0.5em}

\noindent {\it Step 1} (approximation){\it.}\ We perform once again an approximation via convolution, e.g.\ as in the case of uniformly bounded coefficients. In this case, however, we only use the fact that $(\nu^\veps)_\veps$ are solutions to FPE's associated to diffusion operators whose coefficients are locally bounded (and the bound \eqref{eq:fpe-integrability} is preserved).
\vspace{0.5em}

\noindent {\it Step 2} (tightness){\it.}\  We argue exactly as in the previous case, i.e.\ using de la Vall\'ee Poussin criterion to provide suitable $\Theta_1$, $\Theta_2$.
\vspace{0.5em}

\noindent {\it Step 3} (limit){\it.}\  Again, this step is described in Section \ref{sec:weak-limit-martingales}.

\vspace{0.5em}

As already remarked at the beginning of this section, one could combine all the arguments above and prove Theorem \ref{thm:sp}, starting from the ``base case''  with a single combination of mollifications and push-forwards approximations. On a technical level, the main difficulty is to obtain the result for locally bounded coefficients, and this is done after we establish the result for uniformly bounded coefficients, regardless of their regularity, essentially because the push-forward approximation may not preserve it.

\def\cprime{$'$} \def\cprime{$'$}
\providecommand{\bysame}{\leavevmode\hbox to3em{\hrulefill}\thinspace}

\end{document}